\newtheorem{thm}{Theorem}[section]
\newtheorem{lem}[thm]{Lemma}
\newtheorem{cor}[thm]{Corollary}
\newtheorem{dfn}[thm]{Definition}
\newtheorem{rem}[thm]{Remark}
\newcommand{\M}{{\mathcal{M}}}
\newcommand{\Ss}{{\mathcal{S}}}
\newcommand{\vr}{\varrho}
\newcommand{\vf}{\varphi}
\begin{document}

\newcommand{\Complex}{\mathbb{C}}
\newcommand{\Real}{\mathbb{R}}

\newcommand{\bi}{\textbf{I}}

\newcommand{\bz}{\textbf{z}}

\newcommand{\dt}{\tau}

\newcommand{\dx}{h}

\newcommand{\order}{{\mathcal O}}

\newcommand{\y}{Y}
\newcommand{\f}{F}
\newcommand{\bfun}{\textbf{F}}
\newcommand{\uu}{U}
\newcommand{\un}{U_n}
\newcommand{\unone}{U_{n+1}}

\newcommand{\bA}{\textbf{A}}
\newcommand{\bb}{\textbf{b}}
\newcommand{\bc}{\textbf{c}}

\newcommand{\bL}{\textbf{L}}

\newcommand{\by}{\textbf{Y}}
\newcommand{\bv}{\textbf{v}}
\newcommand{\br}{\textbf{r}}
\newcommand{\balpha}{\boldsymbol{\alpha}}
\newcommand{\bbeta}{\boldsymbol{\beta}}
\newcommand{\btheta}{\boldsymbol{\theta}}
\newcommand{\bdelta}{\boldsymbol{\delta}}
\newcommand{\bsv}{\textbf{v}^*}
\newcommand{\bsalpha}{\boldsymbol{\alpha}^*}
\newcommand{\bsbeta}{\boldsymbol{\beta}^*}
\newcommand{\bone}{\mathbbm{1}}

\newcommand{\ty}{\widetilde{Y}}
\newcommand{\bty}{\widetilde{\textbf{Y}}}
\newcommand{\tu}{\widetilde{U}}
\newcommand{\tun}{\widetilde{U}_n}
\newcommand{\tunp}{\widetilde{U}_{n+1}}
\newcommand{\tr}{\tilde{r}}
\newcommand{\btr}{\tilde{\textbf{r}}}
\newcommand{\bd}{\textbf{d}}
\newcommand{\berr}{\epsilon}
\newcommand{\bQ}{\textbf{Q}}
\newcommand{\hy}{\widehat{Y}} 
\newcommand{\hun}{\widehat{U}_n}
\newcommand{\hunp}{\widehat{U}_{n+1}}

\newcommand{\alphatop}{\alpha_{1:s}}
\newcommand{\alphabottom}{\alpha_{s+1}}
\newcommand{\betatop}{\beta_{1:s}}
\newcommand{\betabottom}{\beta_{s+1}}
\newcommand{\vvtop}{v_{1:s}}
\newcommand{\vvbottom}{v_{s+1}}

\newcommand{\abs}[1]{\vert #1 \vert}
\newcommand{\maxnorm}[1]{\vert \vert #1 \vert \vert_{\infty}}

\newcommand{\epsmach}{\epsilon_\textup{machine}}

\newcommand{\sspcoeff}{\mathcal{C}}

\newcommand{\MEEp}{\M^{\mathrm{EE}}_p} 
\newcommand{\MEMp}{\M^{\mathrm{EM}}_p} 

\newcommand{\Cleft}{\Complex_{-,0}}

\title{Propagation of internal errors in explicit Runge--Kutta methods 
and internal stability of SSP and extrapolation methods}
\author{David I. Ketcheson\thanks{4700 King Abdullah University of Science and
Technology, Thuwal, 23955-6900, Saudi Arabia.  This publication is based on work
supported by Award No. FIC/2010/05 { 2000000231, made by KAUST}.} \and Lajos
L\'oczi\thanks{King Abdullah University of Science and Technology.} \and Matteo
Parsani\thanks{Computational Aerosciences Branch, NASA Langley Research Center, 
Hampton, VA 23681, USA. This author was also supported in part by an 
appointment to the NASA Postdoctoral Program at Langley Research Center, 
administered by Oak Ridge Associates Universities.} }

\maketitle

\begin{abstract}
In practical computation with Runge--Kutta methods, the stage equations are
not satisfied exactly, due to roundoff errors, algebraic solver errors, and so forth.
We show by example that propagation of such errors within a single step
can have catastrophic effects for otherwise practical and well-known methods.
We perform a general analysis of internal error propagation,
emphasizing that it depends significantly on how the method is implemented.
We show that for a fixed method, essentially any set of
internal stability polynomials can be obtained by modifying the implementation details.
We provide bounds on the internal error amplification 
constants for some classes of methods with many stages, including strong 
stability preserving methods and extrapolation methods.  These results are used
to prove error bounds in the presence of roundoff or other internal errors.
\end{abstract}

\section{Error propagation in Runge--Kutta methods}
Runge--Kutta (RK) methods are used to approximate the solution of initial value
ODEs:
\begin{equation} \label{eq:ivp}
    \begin{cases}
        \uu'(t) = \f(t,\uu(t)), \\
        \uu(t_0)= \uu_0,
    \end{cases}
\end{equation}
often resulting from the semi-discretization of partial differential equations (PDEs).
An $s$-stage RK method approximates the solution of \eqref{eq:ivp} as follows:
\begin{subequations} \label{eq:butcher}
\begin{align} \label{eq:stages}
        \y_i &= \un + \dt \sum_{j=1}^s a_{ij} \f(t_n+c_j \dt, \y_j) & (1\leq i \leq s), \\
        \unone &= \un + \dt \sum_{j=1}^s b_j \f(t_n+c_j \dt, \y_j).
\end{align}
\end{subequations}
Here $\un$ is a numerical approximation to $\uu(t_n)$,
$\dt = t_{n+1}-t_n$ is the step size, and the stage values $\y_j$ are approximations to
the solution at times $t_n+c_j\dt$.

Most analysis of RK methods assumes that the stage equations \eqref{eq:stages}
are solved exactly.  In practice, perturbed solution and stage values are computed:
\begin{subequations} \label{eq:butcher-pert}
\begin{align} 
        \ty_i &= \tun + \dt \sum_{j=1}^s a_{ij} \f(t_n+c_j \dt, \ty_j) + \tr_i & (1\leq i \leq s) \\
        \tunp &= \tun + \dt \sum_{j=1}^s b_j \f(t_n+c_j \dt, \ty_j) + \tr_{s+1}.
\end{align}
\end{subequations}
The {\em internal errors} (or stage residuals) $\tr_j$ include errors due to
\begin{itemize}
    \item roundoff; 
    \item finite accuracy of an iterative algebraic solver (for implicit methods).
\end{itemize}
The perturbed equations \eqref{eq:butcher-pert} are also used to study accuracy by
taking $\ty,\tu$ to be exact solution values to the ODE or PDE system, in which case
the stage residuals include
\begin{itemize}
    \item temporal truncation errors;
    \item spatial truncation errors;
    \item perturbations due to imposition of boundary conditions.
\end{itemize}
Such analysis is useful for explaining the phenomenon of {\em order reduction}
due to stiffness~\cite{Frank1985} or imposition of boundary conditions~\cite{Carpenter1995,abarbanel1996removal}.
The theory of BSI-stability and B-convergence has been developed to understand
these phenomena, and the relevant method property is usually the {\em stage order} \cite{Frank1985}.

The study of both kinds of residuals (due to roundoff or truncation errors) is referred to
as {\em internal stability}~\cite{VanderHouwen1980,Shampine1984,Sanz-Serna1986,Sanz-Serna1989,Verwer1990}.
We focus on the issue of amplification of roundoff errors in explicit
Runge--Kutta schemes, although we will see that some of our results and techniques
are applicable to other internal stability issues.
Since roundoff errors are generally much smaller than truncation errors, 
their propagation within a single step is not usually important.
However for explicit RK (ERK) methods with a large number of stages, the
constants appearing in the propagation of internal errors can be so large that
amplification of roundoff becomes an issue \cite{Verwer1977,VanderHouwen1980,Verwer1996}. 
Amplification of roundoff errors in methods with many stages is
increasingly important because
there now exist several classes of practical RK methods that use
many stages, including Runge--Kutta--Chebyshev (RKC) methods \cite{Verwer1990}, 
extrapolation methods \cite{Hairer2010}, deferred correction methods 
\cite{Dutt2000}, some strong stability preserving (SSP) methods 
\cite{SSPbook}, and other stabilized ERK methods 
\cite{Niegemann2011,Parsani2013}.  Furthermore, these methods are naturally implemented
not in the Butcher form \eqref{eq:butcher}, but in a
{\em modified Shu--Osher form} \cite{Ferracina2004,Higueras2005,SSPbook}:
\begin{equation}
    \begin{aligned} \label{eq:shu-osher}
        \y_i &= v_i \un + \sum_{j=1}^s \big(\alpha_{ij} \y_j + \dt
    \beta_{ij}\f(t_n+c_j \dt, \y_j) \big) \quad (1 \leq i \leq s+1), \\
        \unone &= \y_{s+1}.
    \end{aligned}
\end{equation}
As we will see, propagation of roundoff errors in these schemes should be based
on the perturbed equations
\begin{equation}
\begin{aligned} \label{eq:shu-osher-pert}
        \ty_i &= v_i \tun + \sum_{j=1}^s \big(\alpha_{ij} \ty_j + \dt
    \beta_{ij}\f(t_n+c_j \dt, \ty_j) \big) + \tr_i & (1 \leq i \leq s+1), \\
        \tunp &= \ty_{s+1}
\end{aligned}
\end{equation}
rather than on \eqref{eq:butcher-pert}, because internal error propagation (in 
contrast to traditional error propagation) depends on the form used to implement the method.
Through an example in Section \ref{sec:example} we will see that, even when
methods \eqref{eq:butcher} and \eqref{eq:shu-osher} are equivalent,
the corresponding perturbed methods \eqref{eq:butcher-pert} and \eqref{eq:shu-osher-pert}
may propagate internal errors in drastically different ways.
Thus the residuals in \eqref{eq:shu-osher-pert}
and in \eqref{eq:butcher-pert} will in general be different.
In Section \ref{sec:internal_error}, we elaborate on this difference and derive,
for the first time, completely general expressions for the internal stability polynomials.

We emphasize here that the difference between \eqref{eq:shu-osher} and \eqref{eq:butcher}
is distinct from the re-ordering of step sizes that was used to improve internal
stability in~\cite{VanderHouwen1980}.  Methods
\eqref{eq:shu-osher} and \eqref{eq:butcher} can have different internal stability
properties even when they are algebraically equivalent stage-for-stage.

In Section \ref{sec:iaf}, we introduce the {\em maximum internal amplification factor},
a simple characterization of how a method propagates internal errors.
Although we follow tradition and use the term {\em internal stability}, it should be
emphasized that this topic does not relate to stability
in the usual sense, as there is no danger of unbounded blow-up of errors, only their
substantial amplification.  In this sense, the maximum internal amplification factor
is similar to a condition number in that
it is an upper bound on the factor by which errors may be amplified.
In Section \ref{sec:control} we show that for a fixed ERK method, essentially any
set of internal stability polynomials can be obtained by modifying the implementation.

In Sections \ref{sec:SSP} and \ref{sec:extrap}, we analyze internal error
propagation for SSP and extrapolation methods, respectively.
Theorem \ref{thm:no-amplification} shows that SSP methods exhibit no
internal error amplification when applied under the usual assumption of
forward Euler contractivity.  Additional results in these sections
provide bounds on the internal amplification factor for general initial
value problems.
Much of our analysis follows along the lines of what was done in \cite{Verwer1990}
for RKC methods.  First we determine closed-form expressions
for the stability polynomials and internal stability polynomials of these methods.
Then we derive bounds and estimates for the maximum internal amplification factor.
Using these bounds, we prove error bounds in the presence of roundoff error for whole
families of methods where the number of stages may be arbitrarily large.


\subsection{Preliminaries}
In this subsection we define the basic setting and notation for our work. 
We consider the initial value problem (IVP) \eqref{eq:ivp}
where $\uu:[t_0,T] \rightarrow \Real^m$ and $\f:\Real\times\Real^m \rightarrow \Real^m$.
To shorten the notation, we will sometimes omit the first argument of $\f$, writing
$\f(\uu)$ when there is no danger of confusion.

The RK method \eqref{eq:butcher} and its properties 
are fully determined by the matrix $A =[a_{ij}] \in \mathbb{R}^{s \times s}$
and column vector $b=[b_j] \in \mathbb{R}^{s}$ 
which are referred to as the Butcher coefficients \cite{Butcher2008a}. 

Let us define
\begin{equation}\label{RKdef4}
    \begin{aligned}
\alpha & = \begin{pmatrix} \alphatop & 0 \\ \alphabottom & 0 \end{pmatrix} & 
    \alphatop & = \begin{pmatrix} \alpha_{11} & \cdots & \alpha_{1s} \\ \vdots & &  \vdots \\ \alpha_{s1} & \cdots & \alpha_{ss} \end{pmatrix} &
    \alphabottom = (\alpha_{s+1,1}, \dots, \alpha_{s+1,s}), \\
\beta & = \begin{pmatrix} \betatop & 0 \\ \betabottom & 0 \end{pmatrix} & 
    \betatop & = \begin{pmatrix} \beta_{11} & \cdots & \beta_{1s} \\ \vdots & &  \vdots \\ \beta_{s1} & \cdots & \beta_{ss} \end{pmatrix} &
    \betabottom = (\beta_{s+1,1}, \dots, \beta_{s+1,s}), \\
v & = \begin{pmatrix} \vvtop \\ \vvbottom \end{pmatrix} = (I_{s+1} - \alpha)\bone & \vvtop & = (v_1,\dots,v_s)^\top,
    \end{aligned}
\end{equation}
where $\bone$ is the column vector of length $s+1$ with all entries equal to
unity, and $I_k$ is the $k\times k$ identity matrix.
We always assume that $(I_s - \alphatop)^{-1}$ exists; methods without this property are not
well defined \cite{SSPbook}.
The methods \eqref{eq:butcher} and \eqref{eq:shu-osher} are equivalent under the conditions 
\begin{equation} \label{eq:so2b}
    \begin{aligned}
    A & = (I_s - \alphatop)^{-1} \betatop, \\
    b & = \betabottom + \alphabottom(I_s-\alphatop)^{-1}\betatop.
    \end{aligned}
\end{equation}
We assume that all methods satisfy the conditions for stage consistency of order one,
\textit{i.e.},
\begin{subequations} \label{eq:stage-order-one}
\begin{align}
v_i & = 1 - \sum_j \alpha_{ij}\label{1.8afirst}, \\
c & = A\bone = (I_s - \alphatop)^{-1} \betatop \bone.
\end{align}
\end{subequations}

Finally, define
\begin{align}
\by & = (\y_1, \dots, \y_{s+1})^\top & \bfun(\by) & = (\f(\y_1),\dots,\f(\y_s),0)^\top \\
\balpha & = \alpha \otimes I_m & \bbeta & = \beta \otimes I_m & \bv = v \otimes I_m,
\end{align}
where $\otimes$ denotes the Kronecker product.
The method \eqref{eq:shu-osher} can also then be written 
\begin{equation}
    \begin{aligned} \label{eq:modified-shu-osher-compact}
        \by &= \bv \un +  \balpha \by + \dt 
        \bbeta \bfun(\by), \\
        \unone &= \y_{s+1}.
    \end{aligned}
\end{equation}
Recall that $m$ denotes the dimension of $\uu$ and $s$ denotes the number of stages;
boldface symbols are used for vectors and matrices with dimension(s) of size $m (s+1)$
whenever $m\ge 2$.
When considering scalar problems ($m=1$), we use non-bold symbols for simplicity.

When studying internal error amplification over a single step, we will sometimes
omit the tilde over $\un$ to emphasize that we do not consider propagation of
errors from previous steps.

\begin{rem}
The Butcher representation of a RK method
is the particular Shu--Osher representation obtained
by setting $\alpha_{ij}$ to zero for all $i,j$ and setting
\begin{equation*}
\beta = \beta_0 := 
    \left(
        \begin{array}{cc}
            A & 0 \\
            b^\top  & 0
         \end{array}
    \right).
\end{equation*}
\end{rem}

\subsection{An example\label{sec:example}}
Here we present an example demonstrating the effect of internal error amplification.
We consider the following initial value problem (problem D2 of the non-stiff
DETEST suite~\cite{hull1972comparing}), whose solution traces an ellipse with eccentricity 0.3:
\begin{subequations} \label{eq:D2}
\begin{align} 
x''(t) & = -x/r^3 & x(0) & = 0.7 & x'(0) & = 0,\\
y''(t) & = -y/r^3 & y(0) & = 0   & y'(0) & = \sqrt{13/7}, \\
r^2 & = x^2 + y^2.
\end{align}
\end{subequations}
We note that very similar
results would be obtained with many other initial value problems.
We first compute the solution at $t=20$ using Fehlberg's 5(4) Runge--Kutta pair,
which is not afflicted by any significant internal amplification of error.
Results, shown in Figure \ref{D2}, are typical and familiar to any student
of numerical analysis.  As the tolerance is decreased, the step size controller
uses smaller steps and achieves smaller local---and global---errors, at the
cost of an increased amount of work.  Eventually, the truncation errors become
so small that the accumulation of roundoff errors is dominant and the overall error
cannot be decreased further.

Next we perform the same computation using a twelfth-order extrapolation
method based on the first-order explicit Euler method~\cite{Ketcheson2013};
this method has 67 stages.
The (embedded) error estimator for the extrapolation method is based on the eleventh-order 
diagonal extrapolation entry.  This pair is naturally implemented in
a certain Shu--Osher form (see \textbf{Algorithm} \ref{alg:extrap} in Section \ref{sec:extrap}).  The results, also shown in Figure
\ref{D2}, are similar to those of Fehlberg's method for large tolerances, 
although the number of steps required is much smaller for this twelfth-order
method.  However, for tolerances less than $10^{-9}$, the extrapolation method
fails completely.  The step size controller rejects every step and
continually reduces the step size; the integration cannot be completed to the
desired tolerance in a finite number of steps---even though that tolerance
is six orders of magnitude larger than roundoff!

Finally, we perform the same computation using an alternative implementation
of the twelfth-order extrapolation method.  The Butcher form \eqref{eq:butcher} is used for 
this implementation; it seems probable that no extrapolation method has
ever previously been implemented in this (unnatural) way.  The results
are again shown in Figure \ref{D2}; for large tolerances they are identical
to the Shu--Osher implementation.  For tolerances below $10^{-9}$,
the Butcher implementation is able to complete the integration, albeit
using an excessively large number of steps, and with errors much larger than
those achieved by Fehlberg's method at the same tolerance.

What is the cause for the surprising behavior of the extrapolation method?
We will return to and explain this example after describing the relevant theory.

\begin{figure}
\begin{center}
\subfigure[Global error versus input tolerance\label{fig:errvstol}]{
\includegraphics[width=0.4\textwidth]{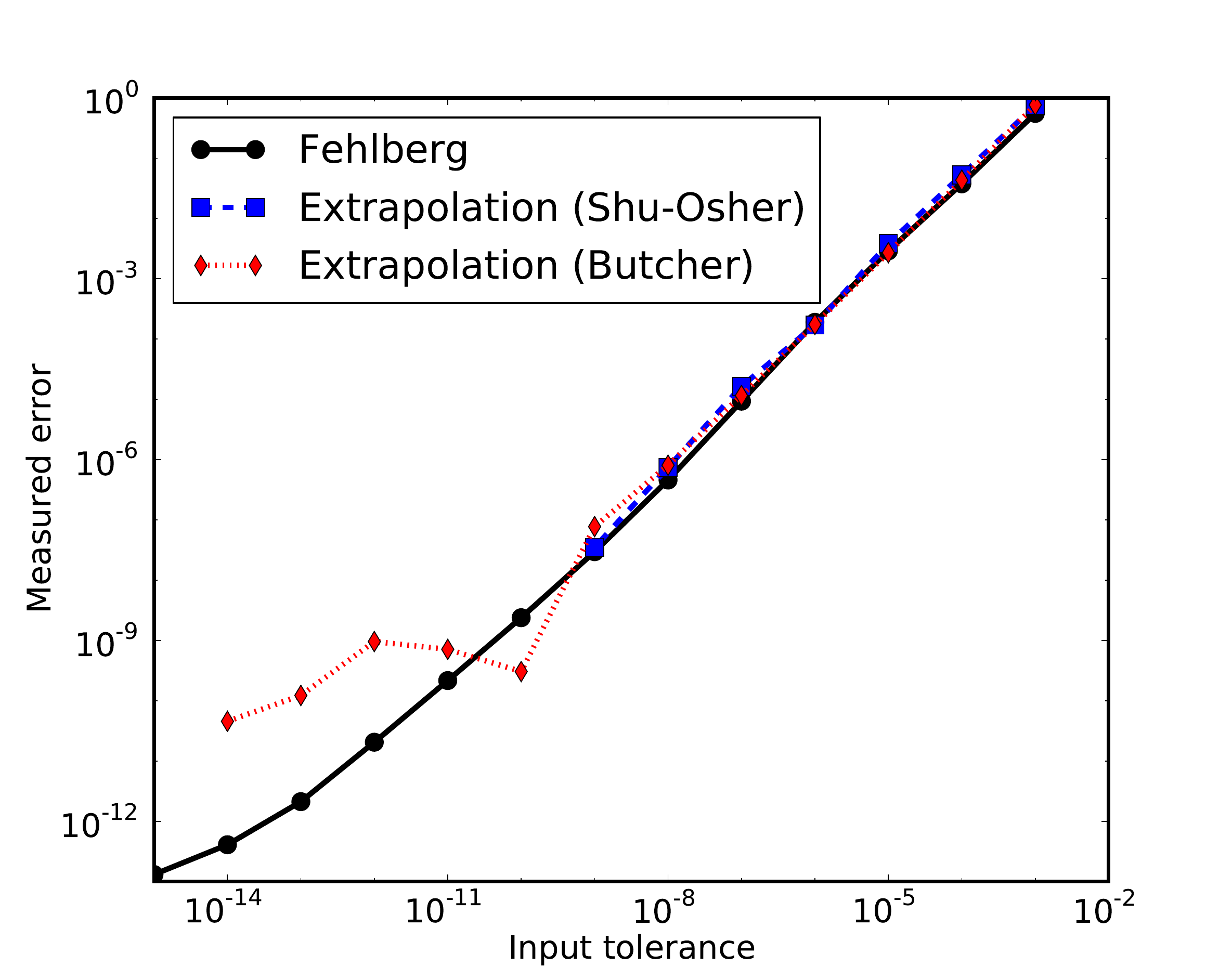}}
\subfigure[Number of steps versus global error\label{fig:stepsvserr}]{
\includegraphics[width=0.4\textwidth]{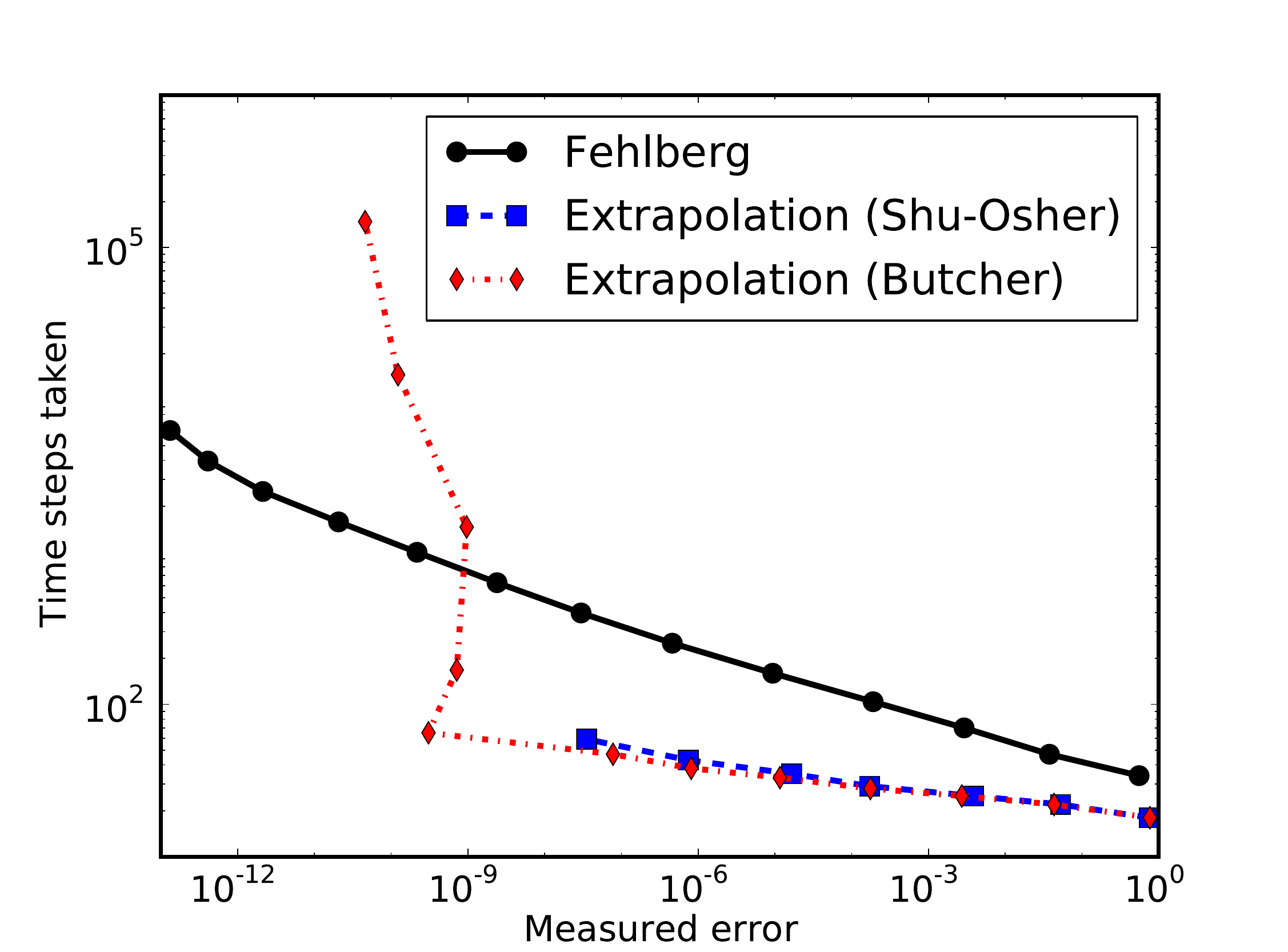}}
\caption{Internal instability of a twelfth-order extrapolation method applied
to problem \eqref{eq:D2}.
\label{D2}}
\end{center}
\end{figure}

\section{Internal error amplification\label{sec:internal_error}}
\subsection{Internal stability functions\label{sec:isf}}
Internal stability can be understood by considering
the linear, scalar ($m=1$) initial value problem  
\begin{align}\label{eq:linear-constant-IVP}
    \begin{cases}
            \uu'(t) = \lambda \uu(t), & \lambda \in \Complex\\
            \uu(t_0) = \uu_0.
        \end{cases}
\end{align}
We will consider the application of an RK method in Shu--Osher form \eqref{eq:modified-shu-osher-compact};
recall that the Butcher form \eqref{eq:butcher} is included as a special case.
Application of the RK method \eqref{eq:modified-shu-osher-compact} to 
the test problem \eqref{eq:linear-constant-IVP} yields
\begin{subequations}\label{eq:modified-shu-osher-2}
\begin{align}
    \y &= \vvtop \un + \alphatop \y + z \betatop \y
    \label{eq:stages-modified-shu-osher-2},\\
    \unone &= \vvbottom \un + \alphabottom \y + z \betabottom \y \label{eq:update-modified-shu-osher-2},
\end{align}
\end{subequations}
where $z = \dt \lambda$.
By solving equation \eqref{eq:stages-modified-shu-osher-2} for $\y$ 
and substituting into \eqref{eq:update-modified-shu-osher-2} we obtain
\begin{align}
    \unone & = P(z) \un,
\end{align}
where $P(z)$ is the \textit{stability function}, which in Shu--Osher variables
takes the form
\begin{equation}\label{eq:stab-fun-modified-shuosher}
P(z) := \vvbottom + \left(\alphabottom + z \betabottom
\right)\left(I_s - \alphatop-z\betatop \right)^{-1} \vvtop.
\end{equation}

If the stage equations are satisfied exactly,
then $P(z)$ completely determines the behavior of the numerical scheme for linear problems.
However, it is known that RK methods with many stages may exhibit
loss of accuracy even when $|P(z)|\le 1$, due to
propagation of errors within a single time step \cite{Verwer1977,VanderHouwen1980,Verwer1996}. 
In order to investigate the internal 
stability of a RK method we apply the perturbed scheme
\eqref{eq:shu-osher-pert}, which for problem \eqref{eq:linear-constant-IVP} yields
\begin{subequations}\label{eq:modufied-shuosher-2-perturbed}
\begin{align}
    \ty &= \vvtop \tun + \alphatop \ty + z\betatop  \ty + \tr_{1:s}
    \label{eq:stages-modified-shuosher-2-perturbed},\\
    \tunp &= \vvbottom \tun + \alphabottom \ty + z\betabottom \ty + \tr_{s+1}
    \label{eq:update-modified-shuosher-2-perturbed}.
\end{align}
\end{subequations}
Let $\berr_n = \tun - \un$ and $d = [d_{j}] = [\ty_{j} - \y_{j}]$.
Subtracting \eqref{eq:modified-shu-osher-2} from 
\eqref{eq:modufied-shuosher-2-perturbed} gives
\begin{align}
    d &= \vvtop \berr_n + \alphatop d + z \betatop d + \tr_{1:s}
    \label{eq:stage-errors-modified-shuosher-2},\\
    \berr_{n+1} &= \vvbottom \berr_n + \alphabottom d + z \betabottom d + \tr_{s+1}
    \label{eq:update-error-modified-shuosher-2}.
\end{align}
By solving \eqref{eq:stage-errors-modified-shuosher-2} for
$d$ and substituting the resulting expression in
\eqref{eq:update-error-modified-shuosher-2},
one arrives at the error formula
\begin{equation}\label{eq:error-modified-shuosher2}
    \begin{split}
        \berr_{n+1} & = P(z) \berr_n + Q(z;\alpha,\beta) \tr_{1:s} + \tr_{s+1}.
    \end{split}
\end{equation}
The stability function $P(z)$ has already been defined in (\ref{eq:stab-fun-modified-shuosher}), 
and the {\em internal stability functions} $Q(z;\alpha,\beta)$ are
\begin{align}\label{Qintstabfunct}
Q(z;\alpha,\beta) & \equiv (Q_1(z),Q_2(z),\ldots,Q_s(z)):= \left(\alphabottom + z \betabottom \right)\left(I_s - \alphatop - z\betatop
        \right)^{-1}.
\end{align}
Note that for convenience we have omitted the last component, $Q_{s+1}(z)$, which is always equal to 1.
We will often suppress the explicit dependence of $Q$ on $\alpha,\beta$ to keep the notation simple.
Using \eqref{eq:so2b}, we can obtain the expression
\begin{align} \label{eq:isp-form3}
Q(z) = z b^\top (I-zA)^{-1} (I -\alphatop)^{-1} + \alphabottom (I-\alphatop)^{-1}.
\end{align}

We will refer to $\epsilon$ as ``error'', though its exact interpretation depends
on what $\uu$ and $\tu$ refer to.
If $\tr$ represents roundoff error, then \eqref{eq:error-modified-shuosher2} indicates
the effect of roundoff on the overall solution.
The one-step error is given by the sum of two terms:
one governed by $P(z)$, accounting for propagation of errors committed in previous steps,
and one governed by $Q(z)$, accounting for propagation of the internal errors within the current
step.  In particular, $Q_j(z)$
governs the propagation of the perturbation $\tr_j$, appearing in stage $j$.
Clearly we must have $|P(z)|\le 1$ for stable propagation of errors, but if
the magnitude of $|Q_j(z)| \tr_j$ is larger than the magnitude of the desired tolerance 
then the second term is also important.

Note that $Q_j(z)$ herein is denoted by $Q_{sj}(z)$ in \cite{Verwer1990}.  
We are mostly interested in explicit RK methods, for which
$Q_j(z)$ is a polynomial of degree at most $s-j$
and the first component of $\tr$ is zero,
since no error is made in setting $\y_1=\un$.

\begin{rem}
For a method in Butcher form (\textit{i.e.}, with $\alpha=0$), \eqref{eq:isp-form3} reads
\begin{align} \label{eq:isp-butcher}
Q^\text{B}(z) := Q(z;0,\beta_0) = z b^\top (I-zA)^{-1}.
\end{align}
Formulas \eqref{eq:isp-form3} and \eqref{eq:isp-butcher} differ in an important way: we have
$Q^\text{B}(z)\to 0$ as $z \to 0$, so that the effects of internal errors vanish in the limit of infinitesimal
step size.  On the other hand, \eqref{eq:isp-form3} does not have this property, so
internal errors may still be amplified by a finite amount, no matter how small the 
step size is.  As we will see, this explains the different behavior of the two extrapolation
implementations in Section \ref{sec:example}.
\end{rem}

\subsubsection{Local defects}
Equation \eqref{eq:error-modified-shuosher2} can also be used to study the discretization error.
If we take
$\tun=\uu(t_n), \ty_i = \uu(t_n+c_i\dt), \tunp=\uu(t_{n+1})$, then $\epsilon_{n+1}$
is the global error and \eqref{eq:error-modified-shuosher2} describes how the
stage errors contribute to it.  We have
\[
\tr_{1:s} = \sum_{k=0}^p \frac{\dt^k}{k!} \theta_k u^{(k)}_h(t_n) + \order(\dt^{p+1}),
\]
where
\begin{equation} \label{eq:theta}
    \begin{aligned}
\theta_0 & = \bone - v - \alpha \bone, \\
\theta_k & = \frac{1}{k!}\left(I_s - \alphatop\right) \left( c^k - k A c^{k-1}\right)
    & &  & & (1\le k \le p).
    \end{aligned}
\end{equation}
Note that here $c \in \Real^{s+1}$ with $c_{s+1}=1$
and $c^k$ denotes the vector with $j$th entry equal to $c_j^k$.
Substituting the above into \eqref{eq:error-modified-shuosher2}, we obtain
\begin{align} \label{errexp_test}
\berr_{n+1} & = P(z) \berr_n + \sum_{k=2}^p \frac{\dt^k}{k!} u^{(k)}_h(t_n) Q(z) \theta_k 
        + \order(\dt^{p+1}).
\end{align}
For a method of order $p$, it can be shown that $\dt^k Q(z)\theta_k = \order(z^{p+1})$,
where $p$ is the classical order of the method, so the expected rate of convergence will
be observed in the limit $z\to 0$.  On the other hand, in problems arising from
semi-discretization of a PDE, it often does not make sense to consider the
limit $z\to0$, but only the limit $\dt \to 0$.  In that case, it can be 
shown only that $\dt^k Q(z)\theta_k = \order(\dt^{\tilde{p}+1})$, where $\tilde{p}$ is the 
{\em stage order} of the method; for all explicit methods, $\tilde{p}=1$.
This difference is responsible for the phenomenon of order reduction 
\cite{Sanz-Serna1986}.

If the stage equations are solved exactly, then the stage and solution values
computed using the Shu--Osher form \eqref{eq:shu-osher-pert} are identically equal to those
computed using the Butcher form \eqref{eq:butcher-pert}.
By comparing the Butcher and Shu--Osher forms, one finds that in general the
residuals are related by
\begin{align*}
\tr^\text{B}_{1:s} & = (I_s - \alphatop)^{-1} \tr^\text{SO}_{1:s} \\
\tr^\text{B}_{s+1} & = \alphabottom ( I_s-\alphatop )^{-1} \tr^\text{SO}_{1:s} + \tr^\text{SO}_{s+1}.
\end{align*}
Here $\tr^\text{B}, \tr^\text{SO}$ denote the residuals in \eqref{eq:butcher-pert} and
\eqref{eq:shu-osher-pert}, respectively.
Thus if the stage equations are solved exactly, the product of $Q(z)$ with the residuals is independent
of the form used for implementation.

If we wish to study the overall error in the presence of roundoff (\textit{i.e.}, the 
combined effect of discretization error and roundoff error), we may take $\un$
to be the solution given by the RK method in the presence of roundoff and
$\tun = \uu(t_n)$.  This leads to
\begin{subequations}
\begin{align} \label{errest}
\berr_{n+1} & = P(z) \berr_n + Q(z)\tr + \sum_{k=2}^p \frac{\dt^k}{k!} u^{(k)}_h(t_n) Q(z) \theta_k 
        + \order(\dt^{p+1}) \\
        & = P(z) \berr_n + Q(z)\tr + \order(\dt^{p+1}),
        \label{errexp_randd}
\end{align}
\end{subequations}
for a method of order $p$, where now $\tr$ denotes only the roundoff errors.
The effect of roundoff becomes significant when the last two terms in \eqref{errexp_randd} have
similar magnitude.

\subsubsection{Internal stability polynomials and implementation: an example\label{sec:simple-example}}
A given Runge--Kutta method can be rewritten in infinitely many different
Shu--Osher forms; this rewriting amounts to algebraic manipulation of the stage 
equations and has no effect on the method if one assumes the stage equations are 
solved exactly. However, the internal stability of a method depends on the
particular Shu--Osher form used to implement it.

For example, consider the two-stage, second order optimal SSP method
\cite{SSPbook}.  It is often written and implemented
in the following modified Shu--Osher form:
\begin{subequations}\label{ssp22}
\begin{align} 
\y_1 & = \un \\
\y_2 & = \y_1 + \dt \f(\y_1) \\
\unone = \y_3 & =\frac{1}{2} \un + \frac{1}{2}(\y_2 + \dt \f(\y_2)).
\end{align}
\end{subequations}
Applying \eqref{ssp22} to the test problem \eqref{eq:linear-constant-IVP} and introducing
perturbations in the stages we have
\begin{align*} 
\ty_1 & = \un \\
\ty_2 & = (1+z) \un + \tr_2 \\
\tunp & = \frac{1}{2} \un + \frac{1}{2}(1+z)\ty_2 + \tr_3.
\end{align*}
Substituting the equation for $\ty_2$ in that for $\tunp$ yields
$$\tunp = \left(1+z+\frac{1}{2} z^2\right) \un + \frac{1+z}{2} \tr_2 + \tr_3,$$
from which we can read off the stability polynomial
$P(z) = 1+z+z^2/2$ and the second-stage internal stability polynomial
\[Q_{2}(z) = (1+z)/2.\]

However, in the Butcher form, the equation for $\unone$ is written as
\[\unone = \un + \frac{1}{2} \dt (\f(\y_1) + \f(\y_2))\]
which leads by a similar analysis to
\[Q_{2}(z) = z/2.\]
More generally, the equation for $\unone$ can be written
\[\unone = \left(\frac{1}{2}+\beta_{31}\right)\un + \left(\frac{1}{2}-\beta_{31}\right) \y_2 + \beta_{31} \dt \f(\y_1) + \frac 1 2 \dt \f(\y_2),\]
with an arbitrary parameter $\beta_{31}\in\mathbb{R}$. 
By choosing a large value of $\beta_{31}$, the internal stability of the implementation can be made
arbitrarily poor.  For this simple method it is easy to see what a reasonable choice of
implementation is, but for methods with very many stages it is far from obvious.
In Section \ref{sec:control} we study this further.

Note that the stability polynomial $P(z)$ is independent of the choice of Shu--Osher form.

\subsection{Bounds on the amplification of internal errors\label{sec:iaf}}
We are interested in bounding the amount by which
the residuals $\tr$ may be amplified within one step, under the assumption
that the overall error propagation is stable.  
In the remainder of this section, we introduce some basic definitions
and straightforward results that are useful in obtaining such bounds.
It is typical to perform such analysis
in the context of the autonomous linear {\em system} of ODEs \cite{Verwer1990}:
\begin{align} \label{eq:ivp-linear}
    \begin{cases}
        \uu'(t)  & = L \uu(t) + g(t), \\
        \uu(t_0) & = \uu_0,
    \end{cases}
\end{align}
where $L \in \Real^{m \times m}$, $g : \Real\to\Real^m$.  Results based on such analysis
are typically useful in the context of more complicated problems, whereas
analyzing nonlinear problems directly does not usually yield further
insight~\cite{VanderHouwen1980,Sanz-Serna1989,Verwer1990,Carpenter1995}.

Application of the perturbed RK method \eqref{eq:shu-osher-pert} to 
problem \eqref{eq:ivp-linear} leads to \eqref{errexp_randd} but with $z=\dt L$.
Taking norms of both sides one may obtain
\begin{align} \label{eq1}
\| \epsilon_{n+1} \| \le \|P(z)\| \|\epsilon_n\| + \sum_{j=1}^{s+1} \|Q_j(z)\| \|\tr_j\| + \order(\dt^{p+1}).
\end{align}
It is thus natural to introduce
\begin{dfn}[Maximum internal amplification factor]
The {\em maximum internal amplification factor} of an $s$-stage RK method (\ref{eq:shu-osher}) with 
respect to a set $\Ss\subset\Complex$ is
\begin{align}\label{Mmaximalinternalamplificationfactor}
    \M \equiv  \M(\alpha,\beta,\Ss):= \max_{j=1, 2, \ldots, s} \sup_{z \in \Ss}  |Q_{j}(z)|,
\end{align}
where $Q_{j}(z)$ is defined in \eqref{eq:isp-form3}.
When the set $\Ss$ is not specified, it is taken to be
the absolute stability region of the method
$\Ss  = \left\{ z \in \Complex : |P(z)|\le 1 \right\}$, with $P(z)$ given by (\ref{eq:stab-fun-modified-shuosher}).
\end{dfn}

In order to control numerical errors, the usual strategy is to reduce the step size.
To understand the behavior of the error for very small step sizes, it is
therefore useful to consider the value
\[
\M_0 := \M(\alpha,\beta,\{0\}) = \max_{j=1, 2, \ldots, s} |Q_j(0)|.
\]

To go further, we need to make an assumption about $\sigma(L)$, the spectrum of $L$.
The next theorem provides bounds on the error in the presence of
roundoff or iterative solution errors, when $L$ is normal.  Similar results could be obtained
when $L$ is non-normal by considering pseudospectra.

\begin{thm} \label{thm:linear}
Let an RK method of order $p$ with coefficients $\alpha,\beta$ be applied to
\eqref{eq:ivp-linear}, where $L$ is a normal matrix and $\dt \sigma(L) \in \Ss$.
Let $\epsilon_n = \tun - \uu(t_n)$ where $\tun$ satisfies the perturbed RK equations
\eqref{eq:shu-osher-pert}.  Then 
\begin{align} \label{local-error}
\|\epsilon_{n+1}\| \le \|\epsilon_n\| +
 (s+1) \M(\alpha,\beta,\Ss) \max_{1\le j \le s+1} \|\tr_j\| + \order(\dt^{p+1}).
\end{align}
The factor $s+1$ can be replaced by $s$ for explicit methods, since then $\tr_1=0$.
\end{thm}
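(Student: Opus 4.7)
My starting point is the error expansion \eqref{errexp_randd}, which I would apply to the linear system \eqref{eq:ivp-linear}. Running through the derivation of Section \ref{sec:isf} with the scalar parameter $z = \dt\lambda$ replaced by the matrix argument $\dt L$ (and with $z\betatop$, $z\betabottom$ replaced by the Kronecker products $\dt\betatop\otimes L$, $\dt\betabottom\otimes L$), the error satisfies
\begin{equation*}
\berr_{n+1} = P(\dt L)\,\berr_n + \sum_{j=1}^{s+1} Q_j(\dt L)\,\tr_j + \order(\dt^{p+1}),
\end{equation*}
where $Q_{s+1}\equiv 1$ and the remainder is the usual order-$p$ truncation term on a linear problem with smooth data. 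Taking the Euclidean norm and applying the triangle inequality yields
\begin{equation*}
\|\berr_{n+1}\| \le \|P(\dt L)\|\,\|\berr_n\| + \sum_{j=1}^{s+1}\|Q_j(\dt L)\|\,\|\tr_j\| + \order(\dt^{p+1}).
\end{equation*}

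The crux of the argument is to convert the operator norms of the matrix-valued rational functions $P(\dt L)$ and $Q_j(\dt L)$ into scalar suprema over the spectrum of $L$. Since $L$ is normal, write $L = U \Lambda U^\ast$ with $U$ unitary. For any rational function $f$ analytic on $\sigma(L)$ one has $f(\dt L) = U f(\dt \Lambda) U^\ast$, and hence
\begin{equation*}
\|f(\dt L)\|_2 = \max_{\lambda \in \sigma(L)} |f(\dt\lambda)|.
\end{equation*}
Applying this with $f = P$ and using the hypothesis $\dt\,\sigma(L) \subseteq \Ss \subseteq \{z : |P(z)|\le 1\}$ gives $\|P(\dt L)\|_2 \le 1$. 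Applying it with $f = Q_j$ for each $j = 1,\dots,s$ and using the definition \eqref{Mmaximalinternalamplificationfactor} of the maximum internal amplification factor gives $\|Q_j(\dt L)\|_2 \le \M(\alpha,\beta,\Ss)$. For the last term, $\|Q_{s+1}(\dt L)\|_2 = 1$, which is at most $\M$ in the regime where the estimate is of interest (otherwise one simply replaces $(s+1)\M$ by $s\M + 1$ throughout).

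Substituting these three bounds, using $\|\tr_j\| \le \max_{1\le j\le s+1}\|\tr_j\|$ in each of the $s+1$ summands, and collecting terms yields precisely \eqref{local-error}. For an explicit method, $\alphatop$ and $\betatop$ are strictly lower triangular so the first stage equation reduces to $\y_1 = \un$, forcing $\tr_1 = 0$; thus only $s$ residuals contribute and the prefactor drops from $s+1$ to $s$.

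\paragraph{Main obstacles.} The only substantive step is the spectral reduction, and it rests entirely on normality of $L$: without it, $\|f(\dt L)\|_2$ need not equal the maximum of $|f|$ on $\sigma(\dt L)$, and one would have to invoke pseudospectra or a Kreiss-type resolvent bound to recover an analogous estimate. A minor technical point is checking that $I_s - \alphatop - \dt\betatop\otimes L$ is invertible, so that the matrix analogues of $P$ and $Q_j$ are well defined on the relevant spectrum; this is automatic from $\dt\,\sigma(L)\subseteq\Ss$ together with $\Ss$ being contained in the absolute stability region, so no $\lambda\in\sigma(L)$ is a pole of any $Q_j$ or of $P$. Everything else is bookkeeping.
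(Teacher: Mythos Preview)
Your proposal is correct and follows essentially the same route as the paper: start from the norm inequality \eqref{eq1} (which you re-derive), then invoke normality of $L$ to reduce $\|P(\dt L)\|$ and $\|Q_j(\dt L)\|$ to spectral maxima, and sum. The paper's proof is a two-line version of what you wrote; your added remarks on the $Q_{s+1}\equiv 1$ term and on invertibility of the stage matrix are details the paper leaves implicit.
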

\begin{proof}
Use \eqref{eq1} and the fact that since $L$ is normal, $\|Q_j(\dt L)\| = \max_{\lambda\in \sigma(L)} |Q_j(\dt \lambda)|.$
\end{proof}

\begin{rem} \label{rem:pdes}
Combining the above with analysis along the lines of \cite{Verwer1990},
one obtains error estimates for application to PDE semi-discretizations.
The amplification of spatial truncation errors does not depend on the implementation,
as can be seen by working out the product $Q(Z) \beta$.
\end{rem}



As an example, in Table \ref{tbl:M} we list approximate maximum internal amplification factors for
some common RK methods.  All of these methods, like most RK methods, have relatively small 
factors so that their internal stability is generally not a concern.

\begin{table}
\begin{center}
\begin{tabular}{l|r|r}
    Method                                      &       $\M$ & $\M_0$ \\ \hline
    Three-stage, third order SSP \cite{shu1988} &       1.7  & 0 \\
    Three-stage, third order Heun \cite{heun1900neue} &       3.2  & 0 \\
    Classic four-stage, fourth order method  \cite{kutta1901beitrag}   &       1.7  & 0 \\
    Merson 4(3) pair   \cite{merson1957operational} &       5.6  & 0 \\
    Fehlberg 5(4) pair  \cite{fehlberg1969klassische} &       5.4  & 0 \\
    Bogacki--Shampine 5(4) pair \cite{Bogacki1996} &       7.0  & 0 \\
    Prince--Dormand order 8 \cite{Prince1981} &       138.8& 0 \\ 
    Ten-stage, fourth order SSP \cite{ketcheson2008highly} &       2.4  & 0.6 \\
    Ten-stage, first order RKC \cite{Verwer1990} &       10.0  & 10.0 \\
    Eighteen-stage, second order RKC \cite{Verwer1990} &       27.8 & 22.6 \\ \hline
\end{tabular}
\caption{Approximate maximum internal amplification factors for some RK methods.
         For RK pairs, the amplification factor of the higher-order method is listed.\label{tbl:M}}
\end{center}
\end{table}


\subsection{Understanding the example from Section \ref{sec:example}}
Using the theory of the last section, we can fully explain the results of
Section \ref{sec:example}.  First, in Table \ref{maxamp}, we give the values
of $\M$ and $\M_0$ for the three methods.  Observe that Fehlberg's method,
like most methods with few stages, has very small amplification constants.
Meanwhile, the Euler extrapolation method, with 67 stages, has a very large
$\M$.  However, when implemented in Butcher form, it necessarily has $\M_0=0$.

For the extrapolation method in Shu--Osher form, the local error will generally
be at least $\M_0 \cdot \epsmach \approx 10^{-10}$, for double precision calculations.  
Therefore, the pair will fail when the requested
tolerance is below this level, which is just what we observe.
The extrapolation method in Butcher form also begins to be afflicted by
amplification of roundoff errors at this point (since it has a similar
value of $\M$).  Reducing the step size has the effect of reducing the 
amplification constant, but only at a linear rate (\textit{i.e.},
in order to reduce the local error by a factor of ten, roughly ten times
as many steps are required).  This is observed in Figure \ref{fig:stepsvserr}.  Meanwhile,
the global error does not decrease at all, since the number of steps taken
(hence the number of roundoff errors committed) is increasing just as fast
as the local error is decreasing.  This is observed in Figure \ref{fig:errvstol}.



\begin{table}
\begin{center}
\begin{tabular}{l|cc}
    Method                                 & $\M$             & $\M_0$ \\ \hline
    Fehlberg 5(4)                          & $5.4$            & 0 \\
    Euler extrapolation 12(11) (Shu--Osher) & $3.4\cdot 10^5$ & $1.3 \cdot 10^5$ \\
    Euler extrapolation 12(11) (Butcher)   & $1.7\cdot 10^5$ & 0 \\ \hline
\end{tabular}
\caption{Maximum internal amplification factors for methods used in the example 
in Section \ref{sec:example}.\label{maxamp}}
\end{center}
\end{table}

\subsection{Effect of implementation on internal stability polynomials\label{sec:control}}
In Section \ref{sec:simple-example}, we gave an example showing how the internal stability
functions of a method could be modified by the choice of Shu--Osher implementation.
It is natural to ask just how much control over these functions is possible.

Given an $s$-stage explicit Runge--Kutta method, let $Q_1^\text{B}, Q_2^\text{B},\dots,Q_s^\text{B}$
denote the internal stability polynomials corresponding to the Butcher form implementation.
Let $d_j$ denote the degree of $Q^\text{B}_j$ and let $w_j$ denote the coefficient of $z^{d_j}$
in $Q^\text{B}_j$.
Let $Q^\text{SO}_1,Q^\text{SO}_2, \dots,Q^\text{SO}_s$ be any set of polynomials with the same degrees $d_j$ and
leading coefficients $w_j$, but otherwise arbitrary.
Then it is typically possible to find a Shu--Osher implementation of the given method such that
the internal stability polynomials are $Q^\text{SO}_1, Q^\text{SO}_2, \dots, Q^\text{SO}_s$.

How can this be done?  Comparing equations \eqref{eq:isp-form3} and \eqref{eq:isp-butcher}, 
it is clear that, except for the constant terms,
the internal stability polynomials corresponding to a given Shu--Osher implementation
are linear combinations of the internal stability polynomials of the Butcher implementation:
\begin{align} \label{btoso}
Q^\text{SO}(z) = Q^\text{B}(z) \gamma + \alphabottom \gamma.
\end{align}
Here $\gamma = (I-\alphatop)^{-1}$ can be any lower-triangular matrix with unit entries
on the main diagonal.  Given $Q^\text{B}(z)$, one can choose $\gamma$ to obtain the desired
polynomials $Q^\text{SO}(z)$ except for the constant terms, and then choose $\alphabottom$
to obtain the desired constant terms.  We have added the qualifier {\em typically}
above because it is necessary that successive subsets of the $Q^\text{B}_j$ span
the appropriate polynomial spaces.

This could be used, in principle, to improve the internal stability of a given method,
such as the twelfth-order extrapolation method in the example from Section \ref{sec:example}.
For example, as desired internal stability polynomials, we take 
$$Q^\text{SO}_j(z) = \sum_{k=1}^{d_j} \frac{z^k}{k!},$$
\textit{i.e.}, the truncated Taylor polynomials of the exponential, but with zero constant term.
This particular choice of polynomials was arrived at more by experiment than analysis, and could almost
certainly be improved upon.  Nevertheless, solving \eqref{btoso} for $\gamma$ and then $\alpha$
we obtain an implementation characterized by 
$$ \M = 8.3\cdot 10^4 \ \ \ \text{and} \ \ \M_0 = 0,$$
which are a noticeable improvement over either the Butcher or the natural Shu--Osher implementation,
though $\M$ is still rather large.

In practice, this implementation behaves similarly to the Butcher implementation, probably
because at tight tolerances the amplification is dominated by factors other than
$\M, \M_0$.
Can the practical behavior of internal errors be substantially improved by choosing a particular
implementation of a method?  The question remains open and merits further research.


\section{Internal amplification factors for explicit strong stability preserving methods\label{sec:SSP}}
In this section we prove bounds on and estimates of the internal amplification
factors for optimal explicit SSP RK methods of order 2 and 3.  These methods have
extraordinarily good internal stability properties.

We begin with a result showing that no error amplification occurs when an SSP
method is applied to a contractive problem.
Recall that an SSP RK method with SSP coefficient $\sspcoeff$ can be written in 
the {\em canonical Shu--Osher form} where $\alpha = \sspcoeff\beta$, $\alpha_{ij}\ge 0$
for all $i,j$, and the row sums of $\alpha$ are no greater than 1~\cite{SSPbook}. 
\begin{thm} \label{thm:no-amplification}
    Suppose that $F$ is contractive with respect to some semi-norm $\|\cdot\|$:
    \begin{align} \label{eq:FE-cond}
        \|U+ \tau F(U) - (\tu + \tau F(\tu))\| & \le \|U - \tu\| &
        \text{for all $U,\tu$ and for $0\le \tau \le \tau_0$}.
    \end{align}

    Let an explicit RK method with SSP coefficient $\sspcoeff>0$ be given
    in canonical Shu--Osher form $\alpha, \beta$ and applied with step size
    $\tau \le \sspcoeff \tau_0$.  Then
    \begin{align} \label{eq:no-amplification}
        \|\epsilon_{n+1}\| \le \|\epsilon_n\| + \sum_{j=1}^{s+1} \|\tr_j\|.
    \end{align}
\end{thm}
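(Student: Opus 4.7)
The plan is to use the standard SSP convex-combination argument applied to the difference between the perturbed and unperturbed stages. The essential observation is that the canonical Shu--Osher form lets each stage be written as a convex combination of forward Euler steps with step size $\tau/\sspcoeff \le \tau_0$, after which the contractivity hypothesis \eqref{eq:FE-cond} can be invoked termwise.

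First I would subtract the unperturbed Shu--Osher stage equation \eqref{eq:shu-osher} from its perturbed counterpart \eqref{eq:shu-osher-pert} to obtain, for $d_i := \ty_i - Y_i$,
\begin{equation*}
d_i = v_i \epsilon_n + \sum_{j=1}^{s} \alpha_{ij}\bigl[(\ty_j + (\tau/\sspcoeff) F(\ty_j)) - (Y_j + (\tau/\sspcoeff) F(Y_j))\bigr] + \tr_i,
\end{equation*}
where I have used the canonical-form relation $\tau \beta_{ij} = (\tau/\sspcoeff)\alpha_{ij}$ to group each pair of terms into a forward Euler increment. Since $\tau/\sspcoeff \le \tau_0$, applying \eqref{eq:FE-cond} to each bracketed difference and using nonnegativity of the $\alpha_{ij}$ and of $v_i = 1-\sum_j \alpha_{ij}$ (canonical form) yields the scalar recursion
\begin{equation*}
\|d_i\| \le v_i \|\epsilon_n\| + \sum_{j=1}^{s} \alpha_{ij} \|d_j\| + \|\tr_i\|.
\end{equation*}

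The next step is a straightforward induction on $i$ to show $\|d_i\| \le \|\epsilon_n\| + \sum_{k=1}^{i} \|\tr_k\|$. The base case $i=1$ uses that an explicit method forces $\alpha_{1j}=0$ (hence $v_1 = 1$). For the inductive step, I substitute the inductive hypothesis into the recursion, collect the $\|\epsilon_n\|$ terms using $v_i + \sum_j \alpha_{ij} = 1$, and bound the partial column sums $\sum_{j \ge k} \alpha_{ij}$ by the full row sum, which is at most $1$. Specializing to $i = s+1$ gives $\|\epsilon_{n+1}\| = \|d_{s+1}\| \le \|\epsilon_n\| + \sum_{k=1}^{s+1} \|\tr_k\|$, as required.

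The main technical point—more of a bookkeeping care than a real obstacle—is handling the residuals correctly in the partial-row-sum step: one must verify that each $\|\tr_k\|$ picks up a coefficient of at most $1$ in the final estimate, which follows because all the $\alpha_{ij}$ are nonnegative and the row sums are bounded by $1$. Everything else is a clean consequence of the canonical form plus the forward-Euler contractivity assumption; no sharper structural hypotheses are needed.
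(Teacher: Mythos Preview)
Your proposal is correct and follows essentially the same approach as the paper: subtract the perturbed and unperturbed canonical Shu--Osher stage equations, apply the forward-Euler contractivity \eqref{eq:FE-cond} termwise using $\beta_{ij}=\alpha_{ij}/\sspcoeff$, and then induct on $i$. The only cosmetic difference is that the paper handles the induction via the convex-combination bound $v_i\|\epsilon_n\| + \sum_j \alpha_{ij}\|d_j\| \le \max\bigl(\|\epsilon_n\|,\max_{j<i}\|d_j\|\bigr)$, whereas you swap the double sum and bound partial row sums---both yield the same recursion $\|d_i\|\le \|\epsilon_n\|+\sum_{k\le i}\|\tr_k\|$.
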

\begin{proof}
The proof proceeds along the usual lines of results for SSP methods.
From \eqref{eq:shu-osher} and \eqref{eq:shu-osher-pert}, we have
\begin{align*}
    \y_i  &= v_i \un  + \sum_{j=1}^{i-1} \alpha_{ij} \left(\y_j  + \frac{\dt}{\sspcoeff} \f(\y_j) \right) \\
    \ty_i &= v_i \tun + \sum_{j=1}^{i-1} \alpha_{ij} \left(\ty_j + \frac{\dt}{\sspcoeff} \f(\ty_j) \right) + \tr_i.
\end{align*}
Taking the difference of these two equations, we have
\begin{align*}
    d_i  &= v_i \epsilon_n  + \sum_{j=1}^{i-1} \alpha_{ij} \left(d_j  + \frac{\dt}{\sspcoeff} \left(\f(\ty_j)-\f(\y_j) \right)\right) + \tr_i,
\end{align*}
where $d$ and $\epsilon$ are defined in Section \ref{sec:isf}.
Applying $\|\cdot\|$ and using convexity, \eqref{eq:FE-cond} and
\eqref{1.8afirst}, we find
\begin{align*}
    \|d_i\|  & \le v_i \|\epsilon_n\|  + \sum_{j=1}^{i-1} \alpha_{ij} \left\|d_j  + \frac{\dt}{\sspcoeff} \left(\f(\ty_j)-\f(\y_j)\right) \right\| + \|\tr_i\| \\
    & \le v_i \|\epsilon_n\|  + \sum_{j=1}^{i-1} \alpha_{ij} \|d_j\| + \|\tr_i\| \\
    & \le \max\left(\|\epsilon_n\|,\max_{1\le j\le i-1} \|d_j\|\right) + \|\tr_i\|.
\end{align*}
Applying the last inequality successively for $i=1,2,\dots,s+1$ shows that $\|d_i\|\le \|\epsilon_n\| + \sum_{j=1}^i \|\tr_j\|$.  In particular, for $i=s+1$ this gives \eqref{eq:no-amplification}.
\end{proof}

The above result is the only one in this work that applies directly to nonlinear problems.
It is useful since it can be applied under the same assumptions that make SSP methods
useful in general.  On the other hand, SSP methods are very often applied under circumstances
in which the contractivity assumption is not justified---for instance, in the time integration
of WENO semi-discretizations.  In the remainder of this section, we develop bounds on
the amplification factor for some common SSP methods.  Such bounds can be applied even
when the contractivity condition is not fulfilled.

\begin{thm}\label{SSPdiskinternalstability}
Let an explicit SSP RK method with $s$ stages and SSP coefficient $\sspcoeff>0$ 
be given in canonical Shu--Osher form
$\alpha, \beta$ and define
\begin{align} \label{eq:disk}
D_\sspcoeff:= \{ z \in \Complex : |z+\sspcoeff|\le \sspcoeff \}.
\end{align}
Then
\[\M(\alpha,\beta,D_\sspcoeff) \le 1.\]
\end{thm}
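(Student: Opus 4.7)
The plan is to deduce the bound from Theorem \ref{thm:no-amplification} applied to the scalar linear test equation. The condition $z \in D_\sspcoeff$ is equivalent to $|1 + z/\sspcoeff| \le 1$, so I would fix such a $z$ and consider the IVP $u' = \lambda u$ on $\Complex \cong \Real^2$ with $\lambda := z/\sspcoeff$, integrated with step size $\tau := \sspcoeff$; then $\tau\lambda = z$, and writing $\tau_0 := 1$ one has exactly $\tau = \sspcoeff \tau_0$ as required by Theorem \ref{thm:no-amplification}.

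To apply that theorem, I first verify its hypothesis \eqref{eq:FE-cond} for $F(U) = \lambda U$ under the Euclidean norm on $\Real^2$, uniformly for $\tilde\tau \in [0,\tau_0]$. Since $|U + \tilde\tau F(U) - \tilde U - \tilde\tau F(\tilde U)| = |1 + \tilde\tau\lambda|\,|U - \tilde U|$, this reduces to showing $|1 + \tilde\tau\lambda| \le 1$ for all $\tilde\tau \in [0,1]$. The point $1 + \tilde\tau\lambda$ lies on the line segment joining $1$ and $1 + \lambda = 1 + z/\sspcoeff$, both of which belong to the closed unit disk; convexity of that disk gives the bound. Theorem \ref{thm:no-amplification} then yields $|\berr_{n+1}| \le |\berr_n| + \sum_{k=1}^{s+1} |\tr_k|$ for arbitrary perturbations.

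Finally, to isolate $Q_j(z)$ for a fixed $j \in \{1,\dots,s\}$, I would set $\berr_n = 0$ and $\tr_k = \delta_{kj}$. The error formula \eqref{eq:error-modified-shuosher2} collapses to $\berr_{n+1} = Q_j(z)$, whence $|Q_j(z)| \le 1$. Taking the supremum over $z \in D_\sspcoeff$ and the maximum over $j$ in the definition \eqref{Mmaximalinternalamplificationfactor} of $\M$ yields the claim. The only nontrivial step is the convexity observation, which promotes contractivity at the single step size $\tilde\tau = \tau_0$ to the uniform bound on the whole interval $[0,\tau_0]$ that \eqref{eq:FE-cond} actually requires; everything else is bookkeeping of the rescaling $\lambda = z/\sspcoeff$, $\tau = \sspcoeff$.
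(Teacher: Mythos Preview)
Your argument is correct. You deduce Theorem~\ref{SSPdiskinternalstability} as a corollary of Theorem~\ref{thm:no-amplification} by specializing to the scalar test equation, whereas the paper proves it directly: it writes out $Q(z)$ explicitly using $\beta=\alpha/\sspcoeff$ and the Neumann series $(I_s-(1+z/\sspcoeff)\alphatop)^{-1}=\sum_k (1+z/\sspcoeff)^k\alphatop^k$, and then bounds the entries of $\sum_k \alphatop^k$ by $1$ via a short convexity argument on the rows. Your route is shorter and more conceptual---it makes transparent that the disk bound is just the linear shadow of the nonlinear SSP contractivity result, with the convexity of the unit disk doing the work of passing from contractivity at $\tilde\tau=\tau_0$ to contractivity on all of $[0,\tau_0]$. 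The paper's route, on the other hand, produces the closed form $Q_j(z)$ as a polynomial in $\nu=1+z/\sspcoeff$ with coefficients read off from powers of $\alphatop$; this explicit structure is what the paper goes on to exploit in the sharper estimates for the SSP2 and SSP3 families, so its proof is doing double duty.
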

\begin{proof}
Setting $\beta = \alpha/\sspcoeff$ in \eqref{Qintstabfunct} and using the fact that $\alphatop$
is strictly lower-triangular, we have
\begin{align*}
Q(z) & = \left(1+\frac{z}{\sspcoeff}\right)\alphabottom \left(I_s - \left(1+\frac{z}{\sspcoeff}\right)\alphatop\right)^{-1} \\
     & = \left(1+\frac{z}{\sspcoeff}\right) \alphabottom \sum_{k=0}^{s-1} \left(1+\frac{z}{\sspcoeff}\right)^{k} \alphatop^k.
\end{align*}
Thus for $z \in D_\sspcoeff$
\begin{align*}
|Q_j(z)| & \le \max_j \left(\alphabottom \sum_{k=0}^{s-1} \alphatop^k\right)_j \le \max_{i,j} \left(\sum_{k=0}^{s-1} \alphatop^k\right)_{ij}.
\end{align*}
In the last inequality, we have used the fact that $\sum_j \alpha_{s+1,j} \le 1$.
It remains only to show that the entries of the last matrix above are no larger than unity.

Observe that
\[
\sum_{k=0}^{s-1} \alphatop^k = I + \alphatop\left(I +
\alphatop\left(I+\alphatop\left(\cdots \right)\right)\right),
\]
where the number of factors $\alphatop$ is equal to $s-1$.  Next let $\mu=I+\alphatop$ and observe that $0\le \mu_{ij}\le 1$.
Since the rows of $\alphatop$ sum to at most 1, then each row of $\alphatop \mu$ is a convex combination of the
rows of $\mu$ and $0\le (\alphatop\mu)_{ij}\le 1$.  It follows that
\[
\max_{i,j} \left(\sum_{k=0}^{s-1} \alphatop^k\right)_{ij} \le 1.
\]
\end{proof}

\begin{rem} \label{rem:ssp}
By exactly the same means, one can show the following: let an explicit SSP RK method with SSP 
coefficient $\sspcoeff > 0$ be given in canonical Shu--Osher form $\alpha,
\beta$ and applied to the linear system \eqref{eq:ivp-linear}. 
Suppose that the forward Euler condition
\begin{align}
    \| \uu(t) + \tau(L\uu(t) + g(t))\| \le \|\uu(t)\|
\end{align}
holds for all $\tau \le \tau_0$.  Then $\|Q(z)\|\le 1$ for $\tau \le \sspcoeff \tau_0$.
\end{rem}

\subsection{Optimal second order SSP methods}
Here we study the family of optimal second order SSP RK methods
\cite{SSPbook}, corresponding to the most natural implementation.  For any number of stages $s\ge2$,
the optimal method is
\begin{align} \label{ssp2}
\begin{aligned}
\y_1 & = \un \\
\y_j & = \y_{j-1} + \frac{\dt}{s-1} \f(\y_{j-1}) & 2\le j\le s \\
\unone & = \frac{1}{s} \un + \frac{s-1}{s} \left(\y_s + \frac{\dt}{s-1} \f(\y_s) \right).
\end{aligned}
\end{align}

\begin{thm} \label{ssp2thm}
The maximum internal amplification factor for the method \eqref{ssp2}
satisfies
\[\M_s^{\mathrm{SSP2}} \le \frac{s+1}{s}.\]
\end{thm}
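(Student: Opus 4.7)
The plan is to exploit the bidiagonal structure of the canonical Shu--Osher form of method \eqref{ssp2} to obtain closed-form expressions for $P(z)$ and for each $Q_j(z)$ as elementary polynomials in a single auxiliary variable $\nu := 1+z/(s-1)$, and then bound $|Q_j|$ over the absolute stability region via a triangle inequality.

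First I would observe that in the canonical form of \eqref{ssp2}, the matrix $\alphatop$ is the $s\times s$ strict shift with $1$'s on the first subdiagonal and $\betatop = \alphatop/(s-1)$, while $\alphabottom$ and $\betabottom$ are supported only on their last components, with values $(s-1)/s$ and $1/s$ respectively. The key identity $\alphatop + z\betatop = \nu\alphatop$ combined with $\alphatop^{s} = 0$ collapses the Neumann series to
\begin{align*}
(I_s - \alphatop - z\betatop)^{-1} = \sum_{k=0}^{s-1}\nu^k\alphatop^k,
\end{align*}
whose $(i,j)$ entry equals $\nu^{i-j}$ for $i\ge j$ and vanishes otherwise. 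Substituting into \eqref{Qintstabfunct} and \eqref{eq:stab-fun-modified-shuosher} (only the last row of the inverse is used, since $\alphabottom + z\betabottom$ is supported on its last entry) I expect to obtain
\begin{align*}
Q_j(z) = \frac{s-1}{s}\,\nu^{\,s-j+1} \ \ (1\le j\le s), \qquad P(z) = \frac{1}{s} + \frac{s-1}{s}\,\nu^{s}.
\end{align*}

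Next I would maximize $|Q_j|$ over $j$ and over the stability region $\Ss = \{z:|P(z)|\le 1\}$. The regime $|\nu|\le 1$ is immediate since then $|Q_j(z)|\le (s-1)/s < (s+1)/s$, so the interesting case is $|\nu|\ge 1$, where the maximum over $j$ is attained at $j=1$ with value $\frac{s-1}{s}|\nu|^s$. Writing $w := \nu^s$, the constraint $|P(z)|\le 1$ translates into the disk inequality $|w + 1/(s-1)| \le s/(s-1)$; a one-line triangle inequality then gives $|w| \le (s+1)/(s-1)$, and multiplication by $(s-1)/s$ produces $|Q_1(z)|\le(s+1)/s$, as required. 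I do not anticipate any substantive obstacle: the whole argument turns on the identity $\alphatop + z\betatop = \nu\alphatop$ (already exploited in the proof of Theorem~\ref{SSPdiskinternalstability}) which reduces every matrix expression to a polynomial in the scalar $\nu$, after which the bound is just the triangle inequality applied to the $\nu^s$-image of the stability region.
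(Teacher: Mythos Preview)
Your proposal is correct and follows essentially the same route as the paper: both introduce $\nu = 1+z/(s-1)$, obtain $P(z) = \tfrac{1}{s} + \tfrac{s-1}{s}\nu^{s}$ and $Q_j(z) = \tfrac{s-1}{s}\nu^{\,s-j+1}$, dispose of the case $|\nu|\le 1$ trivially, and for $|\nu|>1$ use the triangle inequality on the disk $|\nu^s + 1/(s-1)|\le s/(s-1)$ to get $|\nu|^s \le (s+1)/(s-1)$. The only difference is cosmetic: the paper omits the derivation of the $P,Q_j$ formulas (deferring details to the analogous SSP3 computation), whereas you supply it via the Neumann series for $(I_s-\nu\alphatop)^{-1}$.
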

\begin{proof}
It is convenient to define
\begin{equation}\label{scalednuSSP2}
\nu(z) = 1 + \frac{z}{s-1}.
\end{equation}
Then the stability and internal stability functions are
\begin{align*}
P(z) & = \frac{1}{s} + \frac{s-1}{s} \nu(z)^s & Q_{j}(z) & = \frac{s-1}{s} \nu(z)^{s-j+1} \quad (2\le j\le s).
\end{align*}
(For brevity, we omit the details of the derivation here, but we will give a detailed proof of the analogous
SSP3 case in Lemma \ref{ssp3stabpolydetermination}.)
We have
\begin{align*}
\Ss_s & = \left\{ z \in \Complex : \left| \nu(z)^s + \frac{1}{s-1}\right|\le \frac{s}{s-1} \right\}.
\end{align*}
Let $z\in \Ss_s$ be given; we will show that $|Q_j(z)| \le \frac{s+1}{s}$ for each $j$.
We see that
$\nu(z)^s$ lies in a disk of radius $s/(s-1)$ centered at $-1/(s-1)$. Hence
\begin{align*}
\Ss_s \subset \Ss'_s := \left\{ z \in \Complex : |\nu(z)^s|\le \frac{s+1}{s-1}\right\}.
\end{align*}
For $|\nu|\le 1$ the desired result is immediate.  For $|\nu|>1$, we have
\[
\max_{j=2,\ldots, s} \sup_{z\in \Ss_s} |Q_{j}(z)|  = \max_{j=2,\ldots, s}\sup_{z\in \Ss_s} \frac{s-1}{s} |\nu(z)|^{s-j+1} \le\] \[
  \max_{j=1,\ldots, s} \sup_{z\in \Ss'_s} \frac{s-1}{s}|\nu(z)|^{s-j+1}  \le \] \[
  \sup_{z\in \Ss'_s} \frac{s-1}{s}|\nu(z)|^{s}  \le
 \frac{s-1}{s} \cdot \frac{s+1}{s-1}= \frac{s+1}{s}.
\]
\end{proof}

\begin{figure}
\begin{center}
\includegraphics[width=0.35\textwidth]{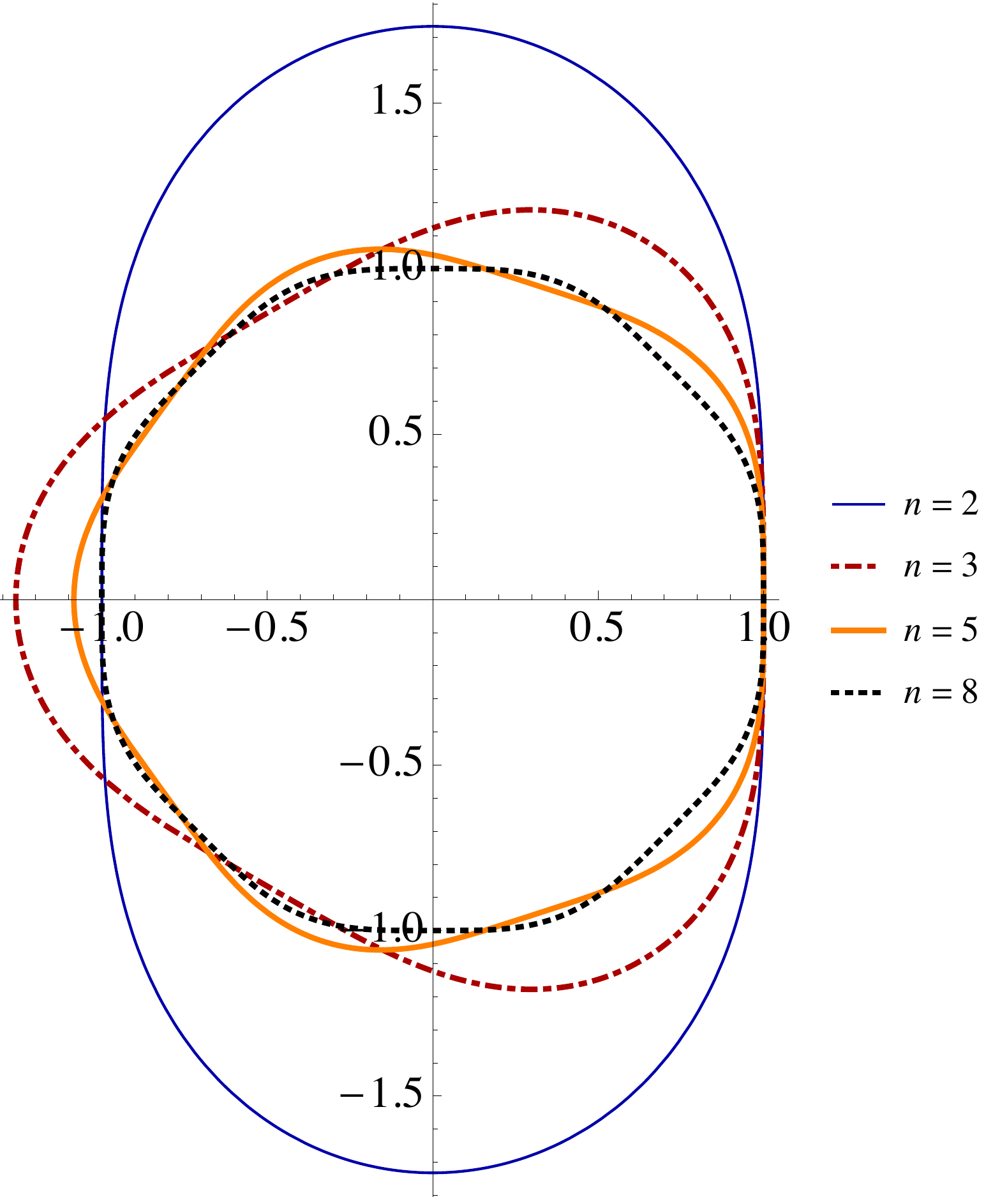}
\caption{Scaled and shifted stability regions (in the sense of (\ref{scalednuSSP2})) 
 of the optimal second order SSP RK methods 
for $s\in\{2, 3, 5, 8\}$.
The boundary curves on the plot are given by $\{\nu\in\Complex :
\left| \frac{s-1}{s}\nu^s+ \frac{1}{s}\right|=1\}$. For any $s$, the
corresponding region has $s$-fold rotational symmetry. \label{fig:ssp2fig}}
\end{center}
\end{figure}

\begin{rem}\label{remark33}
Notice that the ``For $|\nu|\le 1$ the desired result is immediate'' step
in the above proof corresponds to a 
special case of Theorem \ref{SSPdiskinternalstability}, since now 
$\sspcoeff=s-1$ (see \cite{SSPbook}), so $z\in D_\sspcoeff$ is equivalent to $|\nu(z)|\le 1$.
The set $\left\{ z \in \Complex : 
\left| \nu(z) \right|\le 1\right\}$ is represented as the unit disk in Figure \ref{fig:ssp2fig}.
\end{rem}

Combining the above result with Theorem \ref{thm:linear}, Remark \ref{rem:pdes}, or
Remark \ref{rem:ssp}, one obtains full error estimates for application to linear
systems and linear PDEs.

\subsection{Optimal third order SSP methods}\label{optimalthirdorderSSPmethods}

In this section we give various upper and lower bounds on the maximum internal 
amplification factor for the optimal third order SSP Runge--Kutta methods with 
$s\equiv s_n :=n^2$ ($n\ge 2$) stages that were proposed in \cite{ketcheson2008highly}.
The optimal third order SSP RK method with $s=n^2$ stages can be written as follows. 
\begin{align}\label{ssp3}
\begin{aligned}
	\y_1 &= \un \\
	\y_j &= \y_{j-1} + \frac{\dt}{n^2-n} \f(\y_{j-1}) & 2 \leq j\leq s, \, j \neq k_n \\
	\y_{k_n} &= \frac{n-1}{2n-1}\y_{k_n-1} + \frac{n}{2n-1}\y_{m_n} + \frac{\dt}{n(2n-1)} \f(\y_{k_n-1}) \\
	\unone &= \y_{s+1} = \y_s + \frac{\dt}{n^2-n} \f(\y_{s}),
\end{aligned}
\end{align}
where $n\ge 2$, $k_n := \frac{n(n+1)}{2}+1$ and $m_n:= \frac{(n-1)(n-2)}{2}+1$.\\

The main result of the section is the following theorem.

\begin{thm}\label{thm:SSP3}
For any $\vr\ge 1$ and $n\ge 2$ we define 
\[
\mu_n^-(\vr):=-1-\frac{n \vr^{(n-1)^2} \left( 1-\left(1-\frac{1}{n}\right) \vr^{2 n-1}\right) }{2 n-1}
\]
and denote by $\nu_n^*$ the unique root of $\mu_n^-$ in the interval $[1,+\infty)$. Then 
for any $n\ge 2$ and with $s=n^2$, the maximum internal amplification
factor for the optimal $s$-stage, third order SSP Runge--Kutta method satisfies
\begin{equation}\label{theorem35nunstarrepresentation}
\M_s^{\mathrm{SSP3}}=\left({\nu_n^*}\right)^{(n^2-n)/2}.
\end{equation}
For any $n\ge 9$ we have
\[
\frac{9}{10} \sqrt{\frac{n}{\ln (n)}}<\left(1+\frac{\ln (n)}{n^2}-\frac{\ln (\ln (n))}{n^2}\right)^{\left(n^2-n\right)/2}<\M_s^{\mathrm{SSP3}}<
\]
\[\left(1+\frac{\ln (n)}{n^2}-\frac{\ln (\ln (n))}{8 n^2}\right)^{\left(n^2-n\right)/2}<\frac{\sqrt{n}}{\sqrt[16]{\ln (n)}},
\]
and for $2\le n\le 10$ the exact values of $\M_s^{\mathrm{SSP3}}$ are presented in Table
\ref{exactMsSSP3values}.
\end{thm}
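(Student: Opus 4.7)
The plan is to mimic the strategy already used for the SSP2 case but with the added complication that the SSP3 method contains a ``backward jump'' from stage $k_n$ to stage $m_n$, so both $P(z)$ and the $Q_j(z)$ decompose naturally into three qualitatively different regimes depending on where the perturbation is injected.

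First I would prove a preparatory lemma (analogous to the one referenced as \texttt{ssp3stabpolydetermination}) giving closed-form expressions in the scaled variable
\[
\nu := 1 + \frac{z}{n(n-1)}.
\]
By induction along the Shu--Osher recursion in \eqref{ssp3} one obtains
\[
P(z)= \nu^{(n-1)^2}\,\Psi(\nu), \qquad \Psi(\nu) := \frac{(n-1)\nu^{2n-1}+n}{2n-1}.
\]
The same induction, starting with a perturbation injected at stage $j$ and following the only two paths that pass through $\y_{k_n}$ (the ``direct'' path via $\y_{k_n-1}$ and the ``shortcut'' via $\y_{m_n}$), gives the three cases
\[
Q_j(z)=\begin{cases}
 \nu^{(n-1)^2+1-j}\,\Psi(\nu) & 2\le j\le m_n\quad\text{(Case A)},\\[4pt]
 \dfrac{n-1}{2n-1}\,\nu^{n^2+1-j} & m_n< j\le k_n-1\quad\text{(Case B)},\\[4pt]
 \nu^{n^2+1-j} & k_n\le j\le s\quad\text{(Case C)}.
\end{cases}
\]
In particular $Q_{k_n}(z)=\nu^{(n^2-n)/2}$, a pure monomial in $\nu$, and this is the polynomial whose maximum modulus will produce $\M_s^{\mathrm{SSP3}}$.

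Next I would derive \eqref{theorem35nunstarrepresentation}. Because $Q_{k_n}$ depends only on $|\nu|$, the lower bound reduces to finding the largest $|\nu|$ attainable on the stability region $\Ss$ expressed in the $\nu$-plane as $\{|P(\nu)|\le 1\}$. Using the parity identity $P(-\nu)=(-1)^n\,\tilde P(\nu)$ where $\tilde P(\nu)=\bigl[(n-1)\nu^{n^2}-n\nu^{(n-1)^2}\bigr]/(2n-1)$, one checks that $\mu_n^-(\nu)=\tilde P(\nu)-1$, so a root $\nu_n^*\ge 1$ of $\mu_n^-$ corresponds to $|P(-\nu_n^*)|=1$, i.e.\ to a real boundary point $z^*=-n(n-1)(\nu_n^*+1)$ of $\Ss$ with $\nu(z^*)=-\nu_n^*$. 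Existence and uniqueness of $\nu_n^*\in[1,\infty)$ follow by sign analysis ($\mu_n^-(1)=-1-\frac{1}{2n-1}<0$, $\mu_n^-(+\infty)=+\infty$) together with the factorization
\[
(\mu_n^-)'(\nu)=\frac{n(n-1)\,\nu^{(n-1)^2-1}}{2n-1}\bigl[n\nu^{2n-1}-(n-1)\bigr]>0\quad(\nu\ge 1),
\]
which shows strict monotonicity. This yields the lower bound $\M_s^{\mathrm{SSP3}}\ge (\nu_n^*)^{(n^2-n)/2}$ and the representation formula; the matching upper bound requires showing that for every $z\in\Ss$ and every $j$, $|Q_j(z)|\le(\nu_n^*)^{(n^2-n)/2}$. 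For Case C this is immediate since $|\nu|\le \nu_n^*$ and the exponent is maximized at $j=k_n$. For Case B it reduces to the inequality $(\nu_n^*)^{2(n-1)}\le (2n-1)/(n-1)$, which one verifies by inserting the asymptotic expansion of $\nu_n^*$ obtained below (with the finitely many small $n$ handled by the table). Case A is the subtlest; here I would exploit the explicit identity $\Psi(-\nu_n^*)=-(\nu_n^*)^{-(n-1)^2}$ (an immediate consequence of $\mu_n^-(\nu_n^*)=0$) to show that $\Psi$ is very small wherever $|\nu|$ is near $\nu_n^*$, via a maximum-modulus / continuity argument on the boundary $|P(\nu)|=1$.

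Finally, for the asymptotic bounds for $n\ge 9$ I would set $\nu_n^*=1+a_n/n^2$ and expand $\mu_n^-(\nu_n^*)=0$ using $(\nu_n^*)^{n^2}\approx e^{a_n}$ and $(\nu_n^*)^{(n-1)^2}\approx e^{a_n}(1-2a_n/n)$; this reduces the defining equation to $(2a_n-1)e^{a_n}=2n+O(1/n)$, whose solution admits the Lambert-style expansion $a_n=\ln n-\ln\ln n+o(1)$. Sandwiching $a_n$ between $\ln n-\ln\ln n$ and $\ln n -\tfrac{1}{8}\ln\ln n$ by verifying the signs of $\mu_n^-$ at the two candidate values, and then raising to the power $(n^2-n)/2$, gives the stated chain of inequalities, together with the estimates $\bigl(1+\tfrac{\ln n}{n^2}-\tfrac{\ln\ln n}{n^2}\bigr)^{(n^2-n)/2}>\tfrac{9}{10}\sqrt{n/\ln n}$ and $\bigl(1+\tfrac{\ln n}{n^2}-\tfrac{\ln\ln n}{8n^2}\bigr)^{(n^2-n)/2}<\sqrt{n}/\sqrt[16]{\ln n}$, which are elementary once the logarithm is taken. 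The table for $2\le n\le 10$ is then filled in by solving $\mu_n^-(\nu)=0$ numerically (or in closed form via resultants for small $n$) and computing $(\nu_n^*)^{(n^2-n)/2}$.

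The principal obstacle is the Case A upper bound, i.e.\ controlling $|\nu^{(n-1)^2+1-j}\Psi(\nu)|$ uniformly on $\Ss$. The seeming degree count ($n^2-1$ in $\nu$) is misleading because on the boundary $|P(\nu)|=1$ the factor $\Psi(\nu)$ must balance $\nu^{(n-1)^2}$ so as to keep $|P|$ bounded, and exploiting this via the explicit form of $P$ (rather than just degree bounds) is what makes the argument go through; any looser bound would give the wrong growth rate and not match the sharp constant $(\nu_n^*)^{(n^2-n)/2}$.
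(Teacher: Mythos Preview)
Your overall strategy matches the paper's, but there are two places where you either miss the key observation or leave a genuine gap.

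\textbf{Case A is not the subtlest case; it is the easiest.} From your own formulas, $P(z)=\nu^{(n-1)^2}\Psi(\nu)$ and, for $2\le j\le m_n$, $Q_j(z)=\nu^{(n-1)^2+1-j}\Psi(\nu)$. Hence $Q_j(z)=\nu^{\,1-j}P(z)$. On $\Ss$ with $|\nu|\ge 1$ this gives $|Q_j(z)|=|\nu|^{1-j}|P(z)|\le 1\cdot 1$, so Case~A contributes at most $1$ and never competes with Case~C. Your proposed ``maximum-modulus / continuity'' detour is unnecessary and, as stated, only local (near $\nu=-\nu_n^*$) rather than the global bound you need.

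\textbf{You have not shown that $\sup_{z\in\Ss}|\nu(z)|=\nu_n^*$.} You exhibit the real boundary point $\nu=-\nu_n^*$, which gives $\sup|\nu|\ge\nu_n^*$, but then in Case~C you write ``immediate since $|\nu|\le\nu_n^*$'' without justification. The paper closes this by observing that the stability region in $\nu$-coordinates has $(2n-1)$-fold rotational symmetry (since $P$ depends on $\nu$ only through $\nu^{(n-1)^2}$ and $\nu^{n^2}$, whose exponents differ by $2n-1$). One may therefore restrict to a sector of angular width $2\pi/(2n-1)$; within it, writing $\nu=\varrho e^{i\varphi}$ and examining $|P|^2$ shows that for fixed $\varrho\ge1$ the modulus is minimized at $\varphi=\pi/(2n-1)$, i.e.\ along the (rotated) negative real axis. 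This reduces the supremum of $|\nu|$ over $\Ss$ to the real problem $\mu_n^-(\varrho)\le 0$, whose unique solution endpoint is $\nu_n^*$. Without this reduction your upper bound for $\M_s^{\mathrm{SSP3}}$ is unproved.

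Everything else---the monotonicity of $\mu_n^-$ on $[1,\infty)$, the Case~B comparison $(\nu_n^*)^{2(n-1)}\le(2n-1)/(n-1)$, and the asymptotic sandwiching of $\nu_n^*$ via sign checks of $\mu_n^-$ at $1+\frac{\ln n-\ln\ln n}{n^2}$ and $1+\frac{\ln n-\frac18\ln\ln n}{n^2}$---is along the same lines as the paper and is fine in outline.
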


\begin{table}[h!]
\begin{center}
    \begin{tabular}{c|c||c|c}
    $n$ & $\M_{s}^{\mathrm{SSP3}}$ & $n$ & $\M_{s}^{\mathrm{SSP3}}$ \\ \hline
    2 & $1.575$  & 7 & $2.314$  \\
    3 & $1.794$ & 8 &  $2.411$ \\
    4 & $1.956$ & 9 &  $2.501$ \\
    5 & $2.091$ & 10 &   $2.585$ \\
    6 & $2.209$ &  &  \\ \hline
       \end{tabular}
\caption{Maximum internal amplification factors for the first few optimal 
third order SSP RK methods with $s=n^2$ stages. The values in the table have been rounded up.}\label{exactMsSSP3values}
\end{center}
\end{table}

\begin{rem}\label{remark3.6} As a trivial consequence, for $s=n^2$ with $n\ge 4$ we see that 
 \[\M_s^{\mathrm{SSP3}}<\sqrt[4]{s}=\sqrt{n}.\]
As an illustration of Theorem \ref{thm:SSP3}, the inequalities for $s=10^2$, 
 $s=10^4$ and $s=10^{12}$ read as follows:
\[1.875<1.927<\M_{10^2}^{\mathrm{SSP3}}\approx 2.585 <2.661<3.002,\]
\[4.193<4.587<\M_{10^4}^{\mathrm{SSP3}}\approx 5.757 <8.887<9.090,\]
\[242.135<269.038<\M_{10^{12}}^{\mathrm{SSP3}}\approx 302.551 <848.642<848.647.\]
Moreover, if we \textit{assume} that the asymptotic expansion of $\nu_n^*$ starts as
\[
\nu_n^* \sim
1+\frac{\ln(n)}{n^2}-\frac{\ln(\ln(n))}{n^2}+\frac{\ln(\ln
   (n))}{n^2 \ln(n)}-\frac{\ln(\ln(n))}{n^2 \ln^2(n)} \quad\quad (n\to +\infty),
\]
a formula obtained as a result of a heuristic argument but not proved rigorously, see
Remark \ref{remark315heuristicasymptotics} (but see 
Remark \ref{section3closingremark} as well), then we would have, for example, 
\[
\lim_{n\to +\infty}\frac{\M_{n^2}^{\mathrm{SSP3}}}{\sqrt{\frac{n}{\ln (n)}}}=1.
\]
\end{rem}



The proof of Theorem \ref{thm:SSP3} is given in the remainder of this section as follows.
The internal stability polynomials 
are described in Section \ref{generatingQSSP3subsection}. 
In Sections \ref{therealslicesubsection} and
\ref{explicitestimatesnunstarsubsection}, some reductions are carried out
and estimates on $\nu_n^*$ are proved.
The proof of Theorem \ref{thm:SSP3}
is completed in Section \ref{estimatingMssp3subsection}.

\subsubsection{The internal stability polynomials on the absolute stability region}
\label{generatingQSSP3subsection}



Just as in the proof of Theorem \ref{ssp2thm}, 
it will be convenient to apply some normalization and
introduce the scaling and shift
\begin{equation}\label{SSP3scalingdef}
\nu_n(z):= 1 + \frac{z}{n^2-n}\quad\quad (n\ge 2, \, z\in \Complex).
\end{equation}

The following lemma shows that the stability polynomial and the internal stability polynomials 
of this method can simply be expressed in terms of $\nu_n$.

\begin{lem}\label{ssp3stabpolydetermination} 
For any $n\ge 2$, the stability function of
the optimal third order SSP RK method with $s=n^2$ stages is
\[
P(z) = \frac{n-1}{2n-1} \nu_n(z)^{n^2} +\frac{n}{2n-1} \nu_n(z)^{(n-1)^2},
\]
while the internal stability functions are
\[
\begin{aligned}
	Q_{j}(z) = \begin{cases}
			\dfrac{n-1}{2n-1} \nu_n(z)^{n^2-j+1} +
\dfrac{n}{2n-1} \nu_n(z)^{(n-1)^2-j+1} \quad & 2 \leq j \leq m_n, \\\\
			\dfrac{n-1}{2n-1}  \nu_n(z)^{n^2-j+1} & m_n+1 \leq j \leq k_n-1, \\\\
			\nu_n(z)^{n^2-j+1} & k_n \leq j \leq n^2.
		\end{cases}
\end{aligned}
\] 
\end{lem}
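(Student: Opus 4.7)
The plan is to apply the perturbed scheme \eqref{eq:shu-osher-pert} to the scalar test equation \eqref{eq:linear-constant-IVP} with $z=\dt\lambda$ and read off $P(z)$ and $Q_j(z)$ directly from the recursive structure of \eqref{ssp3}. After substitution, each ``standard'' update $Y_j = Y_{j-1}+\frac{\dt}{n^2-n}F(Y_{j-1})$ becomes multiplication by $\nu_n(z)=1+z/(n^2-n)$. The only non-standard step is stage $k_n$, which after substitution becomes
\[
Y_{k_n} = \frac{n-1}{2n-1}\,\nu_n(z)\,Y_{k_n-1}+\frac{n}{2n-1}\,Y_{m_n},
\]
after combining the coefficient $\frac{n-1}{2n-1}$ with the $z/(n(2n-1))=z/(n^2-n)\cdot\frac{n-1}{2n-1}$ term on $Y_{k_n-1}$.

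First I would establish the formula for $P(z)$ by unperturbed propagation. With $Y_1=U_n$, iterating the simple recurrence gives $Y_j=\nu_n^{j-1}U_n$ for $1\le j\le k_n-1$. Then the special stage gives
\[
Y_{k_n}=\tfrac{n-1}{2n-1}\nu_n^{k_n-1}U_n+\tfrac{n}{2n-1}\nu_n^{m_n-1}U_n,
\]
and iterating the simple recurrence again yields $Y_{s+1}=\nu_n^{s+1-k_n}Y_{k_n}$. Using the arithmetic identities $k_n-1=n(n+1)/2$, $m_n-1=(n-1)(n-2)/2$, and $s+1-k_n=n(n-1)/2$, the two exponents combine to $n^2$ and $(n-1)^2$ respectively, producing the claimed $P(z)$.

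For the internal stability polynomials, I would set $\epsilon_n=0$, $\tr_j=1$, all other residuals zero, and propagate the stage errors $d_i$ through the same recurrence to compute $Q_j(z)=d_{s+1}$. This splits naturally into three cases. When $k_n\le j\le n^2$, the error $d_j=1$ only meets simple updates, giving $d_{s+1}=\nu_n^{n^2-j+1}$. When $m_n+1\le j\le k_n-1$, the error reaches $d_{k_n-1}=\nu_n^{k_n-1-j}$ but $d_{m_n}=0$, so at stage $k_n$ only the first term survives; propagating forward then yields $\tfrac{n-1}{2n-1}\nu_n^{n^2-j+1}$. When $2\le j\le m_n$, the error contributes to both $d_{m_n}=\nu_n^{m_n-j}$ and $d_{k_n-1}=\nu_n^{k_n-1-j}$, so both terms at stage $k_n$ contribute and propagate to $d_{s+1}$.

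The only nontrivial verification is the exponent bookkeeping in the third case: it must be checked that $n^2+1-k_n+m_n-j=(n-1)^2-j+1$, which reduces to $k_n-m_n=2n-1$ and follows from the definitions of $k_n$ and $m_n$. This is really the main (and only) point where a short calculation is required; every other step is direct substitution in a linear recurrence with the known structure of \eqref{ssp3}.
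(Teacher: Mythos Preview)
Your proposal is correct and follows essentially the same route as the paper: both apply the perturbed scheme to the linear test problem, reduce every ordinary stage to multiplication by $\nu_n(z)$, identify the special stage $k_n$ as the only branching point, and then track exponents using $k_n-m_n=2n-1$ and $s+1-k_n=n(n-1)/2$. The only organizational difference is that the paper carries all residuals $\tr_j$ through the recursion simultaneously and regroups the resulting sums at the end, whereas you compute each $Q_j$ separately via an impulse-response argument (set $\tr_j=1$, all others zero); the two computations are equivalent and involve the same exponent bookkeeping.
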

\begin{proof}
For simplicity, we will use $\nu\equiv \nu_n(z)$ in the proof. 
The perturbed scheme for the linear test problem $\uu'(t) = \f(\uu(t)):=\lambda  \uu(t)$ with 
$z:=\lambda \dt$ now gives
\[
\begin{aligned}
	\ty_1& =\un \\
        \ty_j &= \nu \ty_{j-1} + \tr_j \quad & 2 \leq j\leq s, j \neq k_n \\
	\ty_{k_n} &= \nu \frac{n-1}{2n-1}\ty_{{k_n}-1} + \frac{n}{2n-1}\ty_{m_n} + \tr_{k_n} \\
        \tunp &=\nu\ty_{s}+\tr_{s+1}.
\end{aligned}
\]
The following steps hold for each $n\ge 2$ with the usual convention that 
$\sum_{j=j_0}^{j_1}(\ldots)=0$ when $j_0>j_1$
(occurring only in the $n=2$ case).

On one hand, as already remarked earlier, the coefficient of $\tr_1$ is always $0$ (since 
no error is made in setting $\y_1=\un$). On the other hand, 
the coefficient of $\tr_{s+1}$ is always $1$ (implying $Q_{s+1}\equiv 1$),
so ignoring this last $\tr_{s+1}$ term from the recursion simplifies the description 
further and does not affect the final values
of $P$ and $Q_j$ ($2\le j \le s$). Hence
\[
\tunp = \nu^{s-k_n+1}\ty_{k_n}+ \sum_{j=1}^{s-k_n} \nu^j \tr_{s-j+1} =
\]
\begin{equation}\label{equation39}
	  \frac{n-1}{2n-1} \nu^{s-k_n+2} \ty_{k_n-1} + \frac{n}{2n-1} \nu^{s-k_n+1} \ty_{m_n} + 
	\sum_{j=1}^{s-k_n+1} \nu^j \tr_{s-j+1}.
\end{equation}
Here, due to $m_n<k_n$, we have 
\[
\begin{aligned}
\ty_{m_n} = \nu^{m_n-1}\un + \sum_{j=1}^{m_n-1} \nu^{j-1} \tr_{m_n-j+1}
\end{aligned}
\]
and
\[
\begin{aligned}
\ty_{k_n-1} = \nu^{k_n-2}\un + \sum_{j=1}^{k_n-2} \nu^{j-1} \tr_{k_n-j}.
\end{aligned}
\]
Substituting these $\ty_{m_n}$ and $\ty_{k_n-1}$ values into (\ref{equation39}) we get
\[
	\tunp = \left(\frac{n-1}{2n-1} \nu^s +\frac{n}{2n-1} \nu^{s-k_n+m_n}\right) \un+ \]
\[
   \frac{n-1}{2n-1}\sum_{j=1}^{k_n-2} \nu^{s-k_n+j+1} \tr_{k_n-j} 
		+ \frac{n}{2n-1}\sum_{j=1}^{m_n-1} \nu^{s-k_n+j} \tr_{m_n-j+1} + 
\sum_{j=1}^{s-k_n+1} \nu^j \tr_{s-j+1}.
\]
After regrouping the sums, we obtain
\[
\tunp = \left(\frac{n-1}{2n-1} \nu^{n^2} +\frac{n}{2n-1} \nu^{(n-1)^2}\right) \un + \]
\[
 \sum_{j=2}^{m_n}\left(\frac{n-1}{2n-1} \nu^{n^2-j+1} +\frac{n}{2n-1} \nu^{(n-1)^2-j+1}\right) \tr_j 
	+ \frac{n-1}{2n-1}\sum_{j=m_n+1}^{k_n-1} \nu^{n^2-j+1} \tr_j + 
\sum_{j=k_n}^{n^2} \nu^{n^2-j+1} \tr_j.
\]
The stability polynomial and the 
internal stability polynomials appear as the coefficients of $\un$ and $\tr_j$.
\end{proof}

In order to determine the maximum internal amplification factor 
$\M_s^{\mathrm{SSP3}}\equiv \M_s^{\mathrm{SSP3}}(\Ss_s)$ 
for the method (\ref{ssp3}), we are going to estimate  
$\displaystyle \max_{j=2,\ldots, s} \sup_{z\in \Ss_s} |Q_{j}(z)|$, where 
\[
\Ss_s = \left\{ z \in \Complex : 
\left|\frac{n-1}{2n-1} \nu_n(z)^{n^2} +\frac{n}{2n-1} \nu_n(z)^{(n-1)^2}\right|\le 1\right\},
\]
$s=n^2$ and $n\ge 2$.

First, the triangle inequality shows that 
$
 \left\{ z \in \Complex : 
\left| \nu_n(z) \right|\le 1\right\}\subset  \Ss_s;
$
the set $\left\{ z \in \Complex : 
\left| \nu_n(z) \right|\le 1\right\}$ appears as the unit disk in Figure \ref{fig:ssp3fig}.
\begin{figure}
\begin{center}
\includegraphics[width=0.45\textwidth]{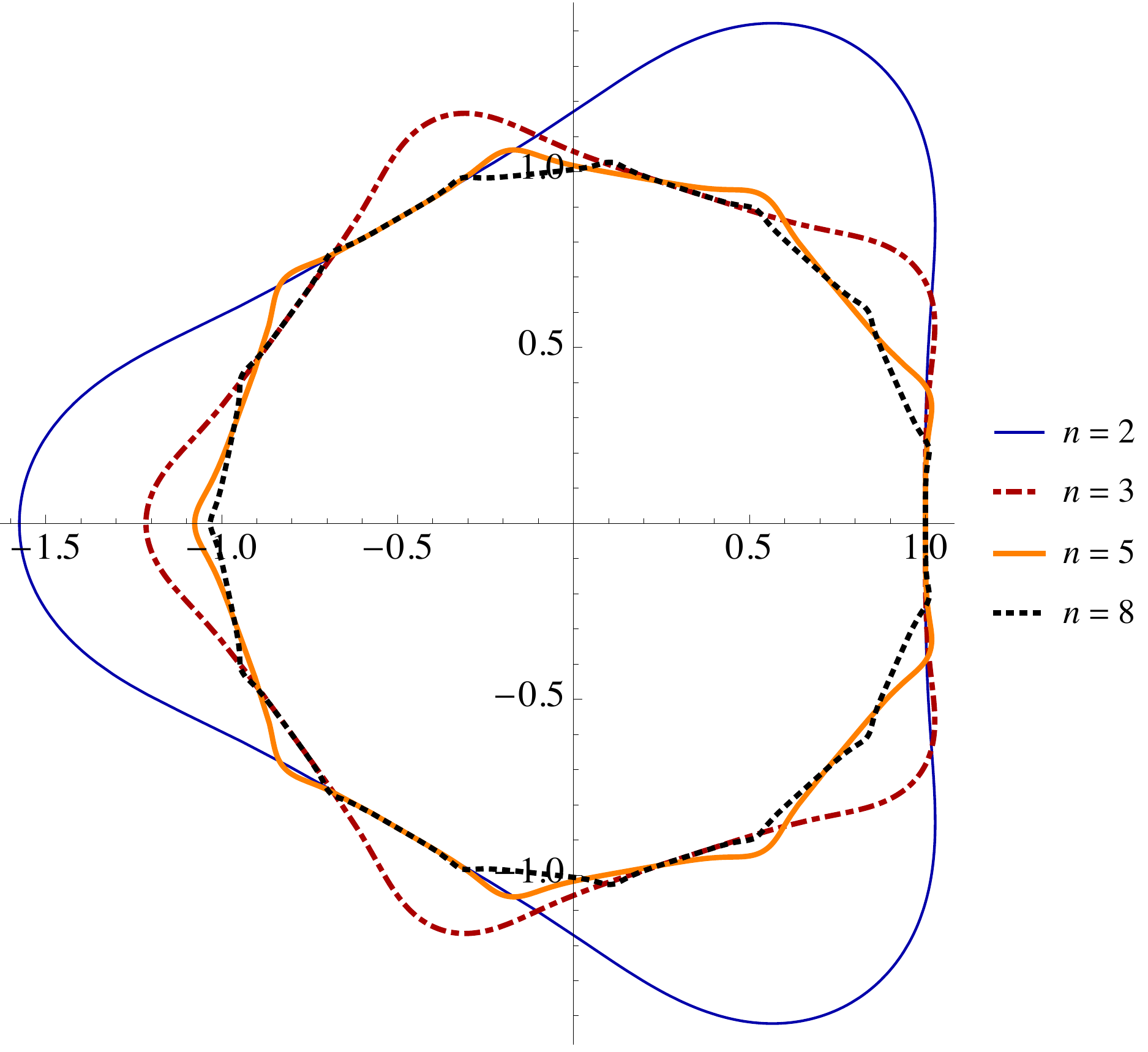}
\caption{Scaled and shifted stability regions (see (\ref{SSP3scalingdef}) and (\ref{snscdef})) 
of the optimal third order SSP RK methods with $n^2$ stages 
for $n\in\{2, 3, 5, 8\}$. The boundary curves on the plot are given by
$
\left\{ \nu \in \Complex : 
\left|\frac{n-1}{2n-1} \nu^{n^2} +\frac{n}{2n-1} \nu^{(n-1)^2}\right|= 1\right\}.
$
For any $n$, the
corresponding region has $(2n-1)$-fold rotational symmetry. 
\label{fig:ssp3fig}}
\end{center}
\end{figure}
By taking into account the explicit forms of the $Q_j$ polynomials provided by Lemma 
\ref{ssp3stabpolydetermination}, we see, again by the triangle inequality, that
$|Q_j(z)|\le 1$ for all $z\in\Complex$ with $|\nu_n(z)|\le 1$.
Hence the following corollary is established. 
\begin{cor} 
We have
\[
\M_s^{\mathrm{SSP3}}(\alpha,\beta,D_\sspcoeff) \le 1,
\] 
where $\sspcoeff = n^2-n$ and $D_\sspcoeff$ is the disk defined in \eqref{eq:disk}.
\end{cor}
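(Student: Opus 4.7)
The plan is to exploit the explicit representation of the internal stability polynomials provided by Lemma \ref{ssp3stabpolydetermination}, combined with the observation that the disk $D_\sspcoeff$ with $\sspcoeff = n^2-n$ is precisely the preimage of the closed unit disk under the affine map $z \mapsto \nu_n(z) = 1 + z/(n^2-n)$.

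The first step is to verify this equivalence: dividing $|z + (n^2-n)| \le n^2-n$ through by $n^2-n$ gives $|\nu_n(z)| \le 1$, and the converse is equally immediate. The second step is to inspect the three cases in Lemma \ref{ssp3stabpolydetermination} and observe that in each case $Q_j(z)$ is a linear combination of integer powers of $\nu_n(z)$ whose nonnegative real coefficients sum to a value at most $1$: namely $\frac{n-1}{2n-1} + \frac{n}{2n-1} = 1$ for $2 \le j \le m_n$, the single coefficient $\frac{n-1}{2n-1} < 1$ for $m_n+1 \le j \le k_n-1$, and the single coefficient $1$ for $k_n \le j \le n^2$. The triangle inequality then forces $|Q_j(z)| \le 1$ whenever $|\nu_n(z)| \le 1$, since each power $|\nu_n(z)|^k$ is bounded by $1$ there. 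Taking the maximum over $j$ and the supremum over $D_\sspcoeff$ completes the bound.

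No step constitutes a genuine obstacle; the content of the corollary is already recorded verbatim in the prose preceding it. As an independent route one could simply specialize Theorem \ref{SSPdiskinternalstability}: the method \eqref{ssp3} is in canonical Shu--Osher form with SSP coefficient $\sspcoeff = n^2-n$ (since every explicit forward-Euler substep carries the factor $\dt/(n^2-n)$), and the cited theorem then yields $\M(\alpha,\beta,D_\sspcoeff) \le 1$ without any further calculation, matching the disk radius in \eqref{eq:disk}.
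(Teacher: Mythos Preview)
Your main argument is correct and is exactly the paper's proof: the text immediately preceding the corollary states that, by the explicit forms of the $Q_j$ from Lemma~\ref{ssp3stabpolydetermination} and the triangle inequality, $|Q_j(z)|\le 1$ whenever $|\nu_n(z)|\le 1$, and you have spelled out the coefficient check in each of the three cases.

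One caveat on your alternative route: the implementation \eqref{ssp3} is \emph{not} in canonical Shu--Osher form as the paper defines it ($\alpha=\sspcoeff\beta$ entrywise). In the stage for $\y_{k_n}$ one has $\alpha_{k_n,m_n}=n/(2n-1)>0$ but $\beta_{k_n,m_n}=0$, so Theorem~\ref{SSPdiskinternalstability} does not apply verbatim to this particular $(\alpha,\beta)$. The paper's own remark after the corollary makes the same informal connection you do, but strictly speaking the theorem would bound $\M$ for the canonical representation, whose internal stability polynomials are implementation-dependent and need not coincide with those of \eqref{ssp3}. This does not affect your primary argument, which stands on its own.
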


\begin{rem} The above corollary again corresponds to a 
special case of Theorem \ref{SSPdiskinternalstability}
 (cf. Remark \ref{remark33}).
\end{rem}

As a consequence, it is enough to bound $|Q_j|$ only 
on the ``petals'' in Figure \ref{fig:ssp3fig}. For $2 \leq j \leq m_n$, $z\in\Ss_s$ and $|\nu_n(z)|\ge 1$ we have
\[
|Q_{j}(z)|=\left|\dfrac{n-1}{2n-1} \nu_n(z)^{n^2-j+1} +
\dfrac{n}{2n-1} \nu_n(z)^{(n-1)^2-j+1}\right|=\]
\[
\left|\nu_n(z) \right|^{-j+1} \left|\dfrac{n-1}{2n-1} \nu_n(z)^{n^2} +
\dfrac{n}{2n-1} \nu_n(z)^{(n-1)^2}\right|\le 1\cdot 1,
\]
by using the definition of $\Ss_s$ and $|\nu_n(z)|\ge 1$. We remark that this $2 \leq j \leq m_n$ case can occur only for $n\ge 3$, since $m_2=1$.

The above observations mean that
\[
 \M_s^{\mathrm{SSP3}}(\Ss_s)\equiv \max_{j=2,\ldots, s} \sup_{z\in \Ss_s} |Q_{j}(z)|=
\max\left\{1,\max_{j=2,\ldots, s} \sup_{z\in \Ss_s, |\nu_n(z)|\ge 1} |Q_{j}(z)|\right\}=
\]
\[
\max\left\{1,\max_{j=m_n+1,\ldots, s} \sup_{z\in \Ss_s, |\nu_n(z)|\ge 1} |Q_{j}(z)|\right\}=\]
\[
\max\left\{1,\max_{j=m_n+1,\ldots, k_n-1} \sup_{z\in \Ss_s, |\nu_n(z)|\ge 1} \left| Q_{j}(z)\right|,
\max_{j=k_n,\ldots, n^2} \sup_{z\in \Ss_s, |\nu_n(z)|\ge 1} \left|Q_{j}(z)\right|\right\}=
\]
\[
\max\left\{1,\max_{j=m_n+1,\ldots, k_n-1} \sup_{z\in \Ss_s, |\nu_n(z)|\ge 1} \left| \dfrac{n-1}{2n-1}  \nu_n(z)^{n^2-j+1} \right|,
\max_{j=k_n,\ldots, n^2} \sup_{z\in \Ss_s, |\nu_n(z)|\ge 1} \left|\nu_n(z)^{n^2-j+1} \right|\right\}=
\]
\[
\max\left\{1,\sup_{z\in \Ss_s, |\nu_n(z)|\ge 1} \left| \dfrac{n-1}{2n-1}  \nu_n(z)^{n^2-m_n} \right|,
\sup_{z\in \Ss_s, |\nu_n(z)|\ge 1} \left|\nu_n(z)^{n^2-k_n+1} \right|\right\}=
\]
\[
\max\left\{1,\sup_{z\in \Ss_s, |\nu_n(z)|\ge 1} \left| \dfrac{n-1}{2n-1}  \nu_n(z)^{(n^2+3n-4)/2} \right|,
\sup_{z\in \Ss_s, |\nu_n(z)|\ge 1} \left|\nu_n(z)^{(n^2-n)/2} \right|\right\}.
\]
Thus, by defining 
\begin{equation}\label{snscdef}
\Ss_n^\mathrm{sc}:=\left\{ \nu\in\Complex : \left|\frac{n-1}{2n-1} \nu^{n^2} +
\frac{n}{2n-1} \nu^{(n-1)^2}\right|\le 1\right\}
\end{equation}
to be the scaled and shifted absolute stability region, and
\[
\nu_n^*:=\sup_{\nu\in \Ss_n^\mathrm{sc}} |\nu|,
\]
we have $\nu_n^*\ge 1$ (as shown by $\nu=1$), and
\begin{equation}\label{MsSSP3upperintermsofnunstar}
\M_s^{\mathrm{SSP3}}(\Ss_s)= \max\left\{\dfrac{n-1}{2n-1}\left(\nu_n^*\right)^{(n^2+3n-4)/2},
\, \left(\nu_n^*\right)^{(n^2-n)/2} \right\}
\quad \quad (s=n^2,\, n\ge 2).
\end{equation}

\subsubsection{The real slice of the absolute stability region}\label{therealslicesubsection}

It is easily seen that $\nu\in \Ss_n^\mathrm{sc}$ implies 
\[
\nu \exp\left(\frac{2\pi i k}{2n-1}\right)\in \Ss_n^\mathrm{sc}\quad\quad (k=0, 1, \ldots),
\]
so the set $\Ss_n^\mathrm{sc}$ is rotationally symmetric. Therefore
\[
\nu_n^*\equiv \sup_{\nu\in \Ss_n^\mathrm{sc}} |\nu|=\sup \left\{|\nu| :  \nu\in \Ss_n^\mathrm{sc}, 
0\le \arg(\nu) \le \frac{2\pi}{2n-1}\right\}.
\]
By introducing polar coordinates $\nu=\varrho e^{i\varphi}$, and the real-valued function
\[
\mu_n(\varrho,\varphi):=
-1+\frac{\vr^{(n-1)^2} }{2 n-1}\sqrt{(n-1)^2 \vr^{4
   n-2}+2n (n-1)\vr^{2 n-1} \cos ((2 n-1 )\vf)+n^2}
\]
defined for all $\vr\ge 0$ and $0\le \vf<2\pi$, we rewrite $\Ss_n^\mathrm{sc}$ as
\[
\left\{ \vr e^{i\vf} : (\vr,\vf)\in [0,+\infty)\times [0,2\pi) ,  \mu_n(\vr,\vf)\le 0 \right\}.
\]
Hence
\begin{equation}\label{nuthereexistsphi}
\nu_n^*=
\sup \left\{\vr :  \vr \ge 0, \exists \vf \in \left[0,\frac{2\pi}{2n-1}\right]  
\textrm{ such that } \mu_n(\vr,\vf)\le 0 \right\}.
\end{equation}
But due to the fact that the range of 
$\left[0,\frac{2\pi}{2n-1}\right]\ni\vf\mapsto \cos((2n-1)\vf)$ is the same as the
range of 
$\left[0,\frac{\pi}{2n-1}\right]\ni\vf\mapsto \cos((2n-1)\vf)$, 
we can write $\exists \vf \in \left[0,\frac{\pi}{2n-1}\right]$ instead of
$\exists \vf \in \left[0,\frac{2\pi}{2n-1}\right]$ in (\ref{nuthereexistsphi}). 
Moreover, we have seen in the previous subsection
that $\nu_n^*\ge 1$, so $\vr\ge 0$ in (\ref{nuthereexistsphi}) can also be replaced by
$\vr \ge 1$. Let us make some more reductions. By defining
\[
\mu_n^*(\vr):=\mu_n\left(\vr,\frac{\pi}{2n-1}\right)=
-1+\frac{n \vr^{(n-1)^2} \left| 1-\left(1-\frac{1}{n}\right) \vr^{2 n-1}\right| }{2 n-1},
\]
see Figure \ref{fig:munstarrho}, we observe that for any $\vr\ge 1$ and $\vf\in \left[0,\frac{\pi}{2n-1}\right]$ 
we have $\mu_n^*(\vr)\le \mu_n(\vr,\vf)$, 
since the function $\left[0,\frac{\pi}{2n-1}\right]\ni\vf\mapsto \cos((2n-1)\vf)$ is decreasing. 
Therefore
\begin{equation}\label{munstartsimpler}
\nu_n^*=
\sup \left\{\vr :  \vr \ge 1, \mu_n^*(\vr)\le 0 \right\}.
\end{equation}

\begin{rem} By taking into account the rotational symmetry, 
the above equality expresses the geometrical fact that the farthest point of $\Ss_n^\mathrm{sc}$
from the origin occurs, for example, along the negative real slice of $\Ss_n^\mathrm{sc}$, 
that is,
\[
\nu_n^*\equiv \sup_{\nu\in \Ss_n^\mathrm{sc}} |\nu|=
 \sup_{\nu\in \Ss_n^\mathrm{sc}\cap (-\infty,-1]} |\nu|.
\]
\end{rem}

For any $n\ge 2$ and $\vr\ge 1$, let us introduce 
\begin{equation}\label{munminusdef}
\mu_n^-(\vr):=-1-\frac{n \vr^{(n-1)^2} \left( 1-\left(1-\frac{1}{n}\right) \vr^{2 n-1}\right) }{2 n-1},
\end{equation}
\[
\mu_n^+(\vr):=-1+\frac{n \vr^{(n-1)^2} \left( 1-\left(1-\frac{1}{n}\right) \vr^{2 n-1}\right)}{2 n-1}
\]
and
\[
\vr_n:=\left(1+\frac{1}{n-1}\right)^{\frac{1}{2 n-1}}>1.
\]
Then $\mu_n^*(\vr)=\mu_n^+(\vr)$ for $1\le\vr\le \vr_n$, and 
$\mu_n^*(\vr)=\mu_n^-(\vr)$ for $\vr_n\le\vr$.
\begin{figure}
\begin{center}
\includegraphics[width=0.5\textwidth]{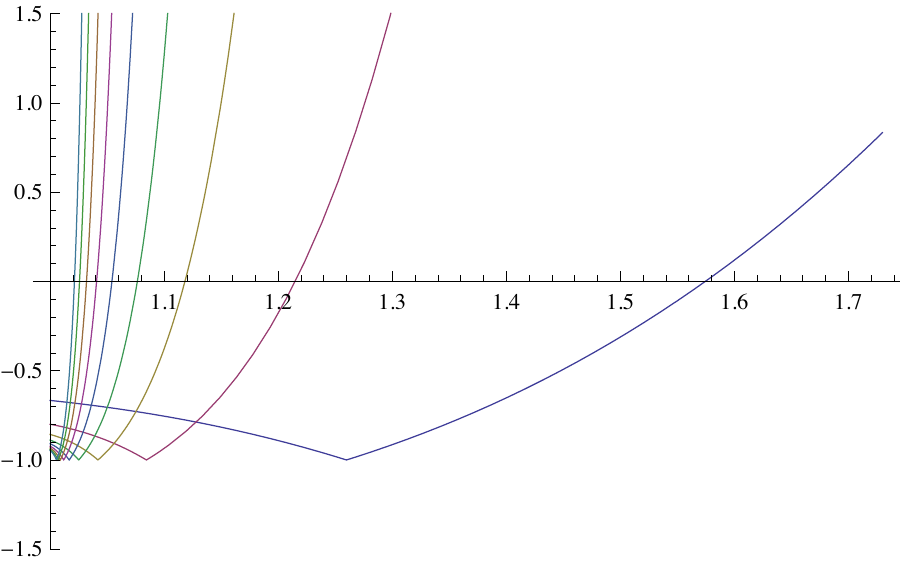}
\caption{The functions $\mu_n^*$ for 
$2\le n \le 10$. \label{fig:munstarrho}}
\end{center}
\end{figure}
Now observe that for $1\le\vr$
\[
\left(\mu_n^+\right)'(\vr)=
\frac{n^2 (n-1) \vr^{n(n-2)} \left(1-\frac{1}{n}-\vr^{2 n-1}\right)}{2 n-1}<0
\]
and $\mu_n^+(1)<0$, so $\mu_n^*<0$ on $[1,\vr_n]$. On the other hand, for
$1\le \vr$ we have
$\left(\mu_n^-\right)'(\vr)=-\left(\mu_n^+\right)'(\vr)$,
so $\mu_n^*$ is strictly increasing
on $[\vr_n,+\infty)$. Notice that $\mu_n^*(\vr_n)=\mu_n^-(\vr_n)=\mu_n^+(\vr_n)=-1$ and
$\displaystyle \lim_{+\infty} \mu_n^*=+\infty$.

By considering (\ref{munstartsimpler}) as well, the following lemma is thus established.
\begin{lem}\label{uniquerootlemma}
For each $n\ge 2$, the polynomial $\mu_n^-$ defined in (\ref{munminusdef}) has a unique zero in the interval
$[1,+\infty)$. Moreover, for any $\vr\ge 1$ we have
\[
\mu_n^-(\vr)<0 \iff \vr<\nu_n^*,
\]
\[
\mu_n^-(\vr)>0 \iff \vr>\nu_n^*,
\]
\[
\mu_n^-(\nu_n^*)=0,
\]
and \[\nu_n^*>\vr_n.\]
\end{lem}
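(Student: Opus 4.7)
The plan is to exploit the monotonicity and endpoint behavior of $\mu_n^-$ that is essentially already established in the paragraph preceding the lemma. From the identity $\mu_n^-(\vr)+\mu_n^+(\vr)=-2$ we obtain $(\mu_n^-)'(\vr)=-(\mu_n^+)'(\vr)$, and since the explicit derivative formula recorded above,
\[
(\mu_n^+)'(\vr)=\frac{n^2(n-1)\vr^{n(n-2)}\left(1-\tfrac{1}{n}-\vr^{2n-1}\right)}{2n-1},
\]
is strictly negative for every $\vr\ge 1$ (because $\vr^{2n-1}\ge 1>1-\tfrac{1}{n}$), the function $\mu_n^-$ is strictly increasing throughout $[1,+\infty)$.

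Next I would evaluate the endpoints: direct substitution gives $\mu_n^-(1)=-1-\tfrac{1}{2n-1}<0$, while the dominant $\vr^{n^2}$ contribution yields $\mu_n^-(\vr)\to+\infty$ as $\vr\to+\infty$. By continuity and strict monotonicity, $\mu_n^-$ possesses a unique zero $\tilde{\nu}$ in $[1,+\infty)$, and the three sign equivalences in the lemma follow immediately from this monotonicity.

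The remaining step is to identify $\tilde{\nu}$ with the quantity $\nu_n^*\equiv\sup_{\nu\in\Ss_n^{\mathrm{sc}}}|\nu|$ defined earlier. Formula (\ref{munstartsimpler}) already expresses $\nu_n^*$ as $\sup\{\vr\ge 1:\mu_n^*(\vr)\le 0\}$; combined with the piecewise description $\mu_n^*=\mu_n^+$ on $[1,\vr_n]$ and $\mu_n^*=\mu_n^-$ on $[\vr_n,+\infty)$, together with the already-established fact that $\mu_n^*<0$ on $[1,\vr_n]$, this supremum coincides with the unique zero of $\mu_n^-$ on $[\vr_n,+\infty)$. Since $\mu_n^-(\vr_n)=-1<0$---using $\vr_n^{2n-1}=\tfrac{n}{n-1}$ to kill the parenthetical term---strict monotonicity forces that zero to lie strictly to the right of $\vr_n$, yielding both $\nu_n^*=\tilde{\nu}$ and the strict inequality $\nu_n^*>\vr_n$. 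I do not anticipate any real obstacle: every ingredient (the derivative of $\mu_n^+$, the value $\mu_n^-(\vr_n)=-1$, the sign of $\mu_n^*$ on $[1,\vr_n]$, and the representation (\ref{munstartsimpler})) has been set up just before the lemma, so the proof is a short consolidation; the only point requiring care is the bookkeeping at the joint $\vr_n$ needed to match the real-variable zero with the geometrically-defined $\nu_n^*$.
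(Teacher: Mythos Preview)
Your proposal is correct and follows essentially the same route as the paper: the argument preceding the lemma already records the derivative identity $(\mu_n^-)'=-(\mu_n^+)'>0$, the value $\mu_n^-(\vr_n)=-1$, the limit $\mu_n^-\to+\infty$, and the representation~(\ref{munstartsimpler}), and the lemma is obtained by assembling these exactly as you do. The only cosmetic difference is that you also compute $\mu_n^-(1)=-1-\tfrac{1}{2n-1}$ to place the zero in $[1,\infty)$ before localizing it past $\vr_n$, whereas the paper works directly from $\vr_n$.
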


\begin{rem}
For large $n$ values, we have $\vr_n= 1+\frac{1}{2n^2}+{\mathcal{O}}(\frac{1}{n^3})$.
\end{rem}

\subsubsection{Explicit estimates for $\nu_n^*$}\label{explicitestimatesnunstarsubsection}

\begin{lem}\label{nunstarfinersandwiching}
For any $n\ge 2$ we have
\[
1+\frac{\ln (n)}{n^2}-\frac{\ln (\ln (n))}{n^2}<\nu_n^*,
\]
while for any $n\ge 9$ we have
\begin{equation}\label{nunstarupperestimate}
\nu_n^*<1+\frac{\ln (n)}{n^2}-\frac{\ln (\ln (n))}{8n^2}=:\lambda_n.
\end{equation}
\end{lem}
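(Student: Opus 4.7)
The plan is to exploit the characterization of $\nu_n^*$ established in Lemma \ref{uniquerootlemma}: since $\mu_n^-$ is strictly increasing on $[1,+\infty)$ with unique root $\nu_n^*$, proving $1 + (\ln n - \ln\ln n)/n^2 < \nu_n^*$ is equivalent to showing $\mu_n^-\!\left(1 + (\ln n - \ln\ln n)/n^2\right) < 0$, and proving $\nu_n^* < \lambda_n$ is equivalent to showing $\mu_n^-(\lambda_n) > 0$. Both inequalities will be checked by Taylor expansion at $\vr = 1$.

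For the estimates, I would rewrite the defining relation cleanly via the identity $\vr^{n^2} = \vr^{(n-1)^2}\vr^{2n-1}$ as
\[
(2n-1)\mu_n^-(\vr) = \vr^{(n-1)^2}\bigl[(n-1)\vr^{2n-1} - n\bigr] - (2n-1),
\]
and set $\vr = 1 + \epsilon$ with $\epsilon = (\ln n - c\ln\ln n)/n^2$, where $c=1$ for the lower bound and $c=1/8$ for the upper bound. Using $\ln(1+\epsilon) = \epsilon - \epsilon^2/2 + O(\epsilon^3)$ and $n^2\epsilon = \ln n - c\ln\ln n$, one finds
\[
(n-1)^2\ln(1+\epsilon) = \ln n - c\ln\ln n - (2n-1)\epsilon + O\!\bigl((\ln n)^2/n^2\bigr),
\]
so $\vr^{(n-1)^2} = \frac{n}{(\ln n)^c}\,e^{-(2n-1)\epsilon}\bigl(1 + O((\ln n)^2/n^2)\bigr)$, and analogously $\vr^{2n-1} = e^{(2n-1)\epsilon}\bigl(1 + O((\ln n/n)^2)\bigr)$. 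Substituting and simplifying, the main term of $\vr^{(n-1)^2}[(n-1)\vr^{2n-1} - n]$ reduces to $\frac{n^2}{(\ln n)^c}\bigl[(1-\tfrac{1}{n}) - e^{-(2n-1)\epsilon}\bigr]$; expanding $e^{-(2n-1)\epsilon}$ to second order gives a leading contribution $\frac{n(2\ln n - 2c\ln\ln n - 1)}{(\ln n)^c}$. Subtracting $2n-1$ produces, for $c=1$, the negative dominant term $-\frac{n(2\ln\ln n + 1)}{\ln n}$, and for $c=1/8$, the positive dominant term $2n\bigl[(\ln n)^{7/8} - 1\bigr]$ plus lower-order pieces, yielding the claimed signs.

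The principal obstacle is that the upper bound is quite tight near the advertised threshold $n = 9$: a direct numerical check from Table \ref{exactMsSSP3values} shows $\nu_9^* \approx 1.02576$ while $\lambda_9 \approx 1.02591$, a gap of about $1.5 \times 10^{-4}$. Hence the cubic Taylor remainder in $\ln(1+\epsilon)$ and the quadratic remainder in $e^{-(2n-1)\epsilon}$ cannot simply be absorbed into an $O(\cdot)$ symbol; they must be made quantitative, for example by replacing $\ln(1+\epsilon)$ with the explicit two-sided estimate $\epsilon - \epsilon^2/2 \le \ln(1+\epsilon) \le \epsilon - \epsilon^2/2 + \epsilon^3/3$ (valid for small positive $\epsilon$) and tracking the resulting inequalities. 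My approach would be to perform this explicit bookkeeping to obtain a threshold $N_0$ (likely around $11$ or $12$) above which the analytic estimate rigorously gives $\mu_n^-(\lambda_n) > 0$, and to verify the cases $n \in \{9, 10, \dots, N_0\}$ of both inequalities by direct numerical computation of $\nu_n^*$ (either from the tabulated $\M_s^{\mathrm{SSP3}}$ values via \eqref{theorem35nunstarrepresentation}, or by interval arithmetic applied directly to $\mu_n^-$). The lower bound is considerably easier, since its main term has magnitude $\Theta(n/\ln n)$ rather than the $\Theta(n(\ln n)^{7/8})$ of the upper bound, leaving ample room for error terms at every $n \ge 3$; the single case $n=2$ is handled directly.
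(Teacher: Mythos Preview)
Your proposal is correct and follows essentially the same approach as the paper: both reduce via Lemma \ref{uniquerootlemma} to checking the sign of $\mu_n^-$ at the candidate bounds, use the elementary two-sided estimates $x - x^2/2 < \ln(1+x) < x$ (the paper's \eqref{logupperestimate} and \eqref{loglowerestimate}) to control the relevant powers of $\lambda_n$, and handle the small-$n$ cases by direct numerical verification. The paper's presentation differs only in organization, decomposing the upper-bound argument into twelve short elementary steps that separately bound $\lambda_n^{2n-1}$ and $\lambda_n^{(n-1)^2}$ from below before combining them (rather than Taylor-expanding the product directly), and through careful choice of intermediate constants manages to push the analytic threshold all the way down to $n=10$, with $n=9$ checked individually.
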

\begin{proof} The proof of the lower estimate for $\nu_n^*$ is 
analogous to that of  (\ref{nunstarupperestimate}) and omitted here. The proof of  (\ref{nunstarupperestimate})
is broken into some simpler steps.\\

\noindent \textbf{Step 1.}  $n\ge 3$ $\implies$ $n^{\frac{1}{n}}>1+\frac{\ln (n)}{n}$.\\
\noindent\textbf{Proof.} $\left(n^{\frac{1}{n}}-1-\frac{\ln (n)}{n}\right)^\prime=
-\frac{1}{n^2}\left(n^{\frac{1}{n}}-1\right) (\ln (n)-1)<0$, and 
$\lim_{n\to +\infty}\left(n^{\frac{1}{n}}-1-\frac{\ln (n)}{n}\right)=0$.\\

\noindent\textbf{Step 2.} $n\ge 12\implies \mathit{LHS}:=
\left(1-\frac{1}{n}\right) \left(1+\frac{2}{n}\right) \left(1+\frac{\ln (n)}{n}\right)-1>
\frac{1+\ln(n)}{n}=:\mathit{RHS}$.\\
\noindent\textbf{Proof.} For $12\le n\le 18$, we check the statement directly. For $n \ge 19$,
\[
\left(   \mathit{LHS}- \mathit{RHS}\right)^\prime=
\frac{1}{n^4}(5 n-2 (n-3) \ln (n)-2)<0,
\]
and $\lim_{n\to +\infty} \left( \mathit{LHS}- \mathit{RHS}\right)=0$.\\

\noindent \textbf{Step 3.} $n\ge 17\implies \frac{1+\ln(n)}{\sqrt{\ln (n)+\frac{4}{5}}}>2$.\\
\noindent\textbf{Proof.} The inequality (quadratic in $\ln(n)$) is solved directly.\\

\noindent \textbf{Step 4.} $n\ge 2 \implies 
\frac{\ln (n)}{n^2}-\frac{\ln (\ln (n))}{8 n^2}>\frac{8 \ln (n)}{9 n^2}$.\\
\noindent\textbf{Proof.} Elementary.\\

\noindent \textbf{Step 5.} $n\ge 10 \implies \lambda_n^{2n-1}>
\left(1+\frac{2}{n}\right) n^{\frac{1}{n}}$.\\
\noindent\textbf{Proof.} For $10\le n \le 16$, the inequality is checked separately. So suppose
in the following that $n\ge 17$. We take logarithms of both sides, 
then the right-hand side is increased by using 
\begin{equation}\label{logupperestimate}
x>0 \implies \ln(1+x)<x,
\end{equation}
and the left-hand side is decreased by using Step 4. So it is enough to prove
\[
(2n-1)\ln\left(1+\frac{8 \ln (n)}{9 n^2}\right)>\frac{2}{n} +\frac{1}{n}\ln(n).
\]
We decrease the left-hand side further by applying
\begin{equation}\label{loglowerestimate}
x>0 \implies \ln(1+x)>x-\frac{x^2}{2}.
\end{equation}
After expanding the new left-hand side and omitting one of its positive terms, 
$\frac{32 \ln^2(n)}{81 n^4}$, 
it is enough to show that
$
-\frac{64 \ln^2(n)}{81 n^3}-\frac{8 \ln(n)}{9
   n^2}+\frac{16 \ln (n)}{9 n}>\frac{2}{n}+\frac{\ln(n)}{n}
$, that is,
\[
 \mathit{LHS}_1:=\frac{7 \ln(n)}{9}>2+\frac{8 \ln(n)}{9 n}+\frac{64 \ln^2(n)}{81 n^2}=: \mathit{RHS}_1.
\]
But $\mathit{LHS}_1- \mathit{RHS}_1>0$ at $n=17$; and $\mathit{LHS}_1$ is a monotone increasing, 
while $\mathit{RHS}_1$ is a monotone decreasing function.\\

\noindent \textbf{Step 6.} $n\ge 19   \implies \frac{221}{760} \ln (\ln(n))>\frac{2 \ln(n)}{n},\quad$
$\mathit{LHS}_2:=\frac{13 \ln (\ln (n))}{4000}>\frac{\ln^2(n)}{2 n^4},\quad$
$4\mathit{LHS}_2>\frac{\ln^2(n)}{2 n^2},\quad$
$\mathit{LHS}_2>\frac{\ln(\ln(n))}{8 n^2},\quad$
$\mathit{LHS}_2>\frac{\ln (n) \ln (\ln (n))}{4 n^3},\quad$
$\mathit{LHS}_2>\frac{\ln^2(\ln(n))}{128 n^4},\quad$
$\mathit{LHS}_2>\frac{\ln^2(\ln(n))}{128 n^2}$.\\
\noindent\textbf{Proof.} All inequalities are true for $n=19$. 
On the other hand, the functions appearing on the right-hand sides of the 7 inequalities are all
monotone decreasing, while the functions on the left-hand sides are all increasing. \\

\noindent \textbf{Step 7.} $n\ge 19  \implies$
\[
\frac{9}{20} \ln (\ln (n))>\frac{2
   \ln(n)}{n}+\frac{\ln^2(n)}{2 n^4}+\frac{\ln^2(n)}{2
   n^2}+\frac{\ln(\ln(n))}{8 n^2}+\]
\[
\frac{ \ln(n)\ln(\ln(n))}{4 n^3}+\frac{\ln^2(\ln
   (n))}{128 n^4}+\frac{\ln^2(\ln(n))}{128 n^2}+\frac{1}{8} \ln(\ln(n)).
\]
\noindent\textbf{Proof.}  Add the 7 inequalities presented in Step 6, and use the fact that
\[
\frac{9}{20}-\frac{1}{8}>\frac{221}{760}+\frac{13}{4000}+\frac{13}{1000}+\frac{13}{4000}+\frac{13}{4000}+
\frac{13}{4000}+\frac{13}{4000}.
\]\smallskip

\noindent \textbf{Step 8.} $n\ge 3  \implies 
\mathit{LHS}_3:=\frac{1}{2} \ln \left(\ln (n)+\frac{4}{5}\right)>\frac{9}{20} \ln (\ln(n))=:\mathit{RHS}_3.$\\
\noindent\textbf{Proof.} $\mathit{LHS}_3-\mathit{RHS}_3$ has a global minimum at $n=e^{36/5}\approx
1339.43$ with value $\frac{3}{20} \ln \left(\frac{2000}{729}\right)>0.15$.\\

\noindent \textbf{Step 9.}  $n\ge 10  \implies 
\lambda_n^{(n-1)^2}>\frac{n}{\sqrt{\ln \left(n\right)+\frac{4}{5}}}$.\\
\noindent\textbf{Proof.} We verify the inequality directly for $10\le n \le 18$, so we can suppose
 $n\ge 19$. After taking logarithms of both sides and applying (\ref{loglowerestimate}) 
to decrease the outer logarithm on the left-hand side, we arrive at a stronger inequality
\[
(n-1)^2 \left(\frac{\ln (n)}{n^2}-\frac{\ln (\ln (n))}{8
   n^2}-\frac{1}{2} \left(\frac{\ln (n)}{n^2}-\frac{\ln (\ln (n))}{8
   n^2}\right)^2\right)>\]
\[
\ln (n)-\frac{1}{2} \ln \left(\ln(n)+\frac{4}{5}\right).
\]
We expand the left-hand side here, cancel the common $\ln(n)$ term on both sides, then 
regroup the resulting inequality to get
\[
\frac{1}{2} \ln \left(\ln(n)+\frac{4}{5}\right)+
\frac{\ln (n)\ln (\ln (n)) }{8 n^4}+\frac{\ln^2(n)}{n^3}+\]
\[\frac{\ln
   ^2(\ln(n))}{64 n^3}+\frac{\ln(n)\ln(\ln(n)) }{8 n^2}+\frac{\ln
   (n)}{n^2}+\frac{\ln (\ln (n))}{4 n}>
\]
\[
\frac{2
   \ln(n)}{n}+\frac{\ln^2(n)}{2 n^4}+\frac{\ln^2(n)}{2
   n^2}+\frac{\ln(\ln(n))}{8 n^2}+
\]
\[
\frac{ \ln(n)\ln(\ln(n))}{4 n^3}+\frac{\ln^2(\ln
   (n))}{128 n^4}+\frac{\ln^2(\ln(n))}{128 n^2}+\frac{1}{8} \ln(\ln(n)).
\]
Now we decrease the left-hand side further by omitting all 6 terms with 
the exception of $\frac{1}{2} \ln\left(\ln(n)+\frac{4}{5}\right)$.
The remaining inequality is true due to Step 8 and Step 7.\\

\noindent \textbf{Step 10.}  $n\ge 10  \implies$
\[
\lambda_n^{(n-1)^2} \left( \left(1-\frac{1}{n}\right) \lambda_n^{2 n-1}-1\right) >2.
\]
\noindent\textbf{Proof.} For $16\ge n \ge 10$, the inequality holds
because $2.92>2.80>2.68>2.55>2.41>2.26>2.09>2$. So suppose that $n\ge 17$.
Then by applying Step 5, Step 1 and Step 2, we get\\
\[
\left(1-\frac{1}{n}\right) \lambda_n^{2 n-1}-1>
\left(1-\frac{1}{n}\right)\left(1+\frac{2}{n}\right) n^{\frac{1}{n}}-1>
\]
\[
\left(1-\frac{1}{n}\right)\left(1+\frac{2}{n}\right) \left(  1+\frac{\ln (n)}{n}\right)-1>
\frac{1+\ln(n)}{n}>0,
\]
so we can apply this with Step 9 and Step 3, and obtain
\[
\lambda_n^{(n-1)^2} \left( \left(1-\frac{1}{n}\right) \lambda_n^{2 n-1}-1\right)>
\frac{n}{\sqrt{\ln \left(n\right)+\frac{4}{5}}} \cdot \frac{1+\ln(n)}{n}>2.
\]\smallskip

\noindent \textbf{Step 11.}  $n\ge 9  \implies \mu_n^-(\lambda_n)>0$, with $\mu_n^-$ defined in
(\ref{munminusdef}).\\
\noindent\textbf{Proof.} 
\[
\mu_n^-(\lambda_n)=
-1-\frac{n \lambda_n^{(n-1)^2} \left( 1-\left(1-\frac{1}{n}\right) \lambda_n^{2 n-1}\right) }{2 n-1}>0
\]
is equivalent to
\[
\lambda_n^{(n-1)^2} \left(\left(1-\frac{1}{n}\right) \lambda_n^{2 n-1}-1\right)>2-\frac{1}{n} .
\]
The statement is true for $n=9$, because $1.91>2-\frac{1}{9}$. For $n\ge 10$, we apply Step 10.\\

\noindent \textbf{Step 12.}  Step 4 implies $1< \lambda_n$, so 
by virtue of Lemma \ref{uniquerootlemma} and Step 11, the proof is complete.
\end{proof}

\begin{rem}
One can ask whether the starting index $9$ in
$n\ge 9$ in (\ref{nunstarupperestimate})  can be decreased if, for example,  
the  coefficient $\frac{1}{8}$ on the right-hand side of (\ref{nunstarupperestimate}) 
is replaced by a smaller positive number. However,  
$n\ge 8$ is necessary even for the weaker $\nu_n^*<1+\frac{\ln (n)}{n^2}$ inequality.
\end{rem}

\begin{rem}\label{remark315heuristicasymptotics}
Let us give a heuristic asymptotic approximation to $\nu_n^*$ that 
sheds some light on the origin of Lemma \ref{nunstarfinersandwiching}.
We will use $\ln (1+x)\approx x$ and $e^{x}\approx 1+x$ for small $|x|$. 
Let us take the defining equation for $-\nu_n^*$ 
\[
\left|\frac{(n-1) (-\nu_n^*)^{n^2}}{2 n-1}+\frac{n (-\nu_n^*)^{(n-1)^2}}{2 n-1}\right|-1=0
\]
and rearrange as
\begin{equation}\label{heuristicnun*}
\nu_n^*=\left(\frac{2 n-1}{|n (-\nu_n^*)^{1-2 n}+n-1|}\right)^{1/n^2}.
\end{equation}
Since $-\nu_n^*\approx -1$, we have
\[
\nu_n^*\approx \left(\frac{2 n-1}{|n (-1)^{1-2 n}+n-1|}\right)^{1/n^2}=(2n-1)^{1/n^2}.
\]
But then 
\[\nu_n^*\approx \left(2 n-1\right)^{1/n^2}=
\exp\left({\frac{\ln \left(1-\frac{1}{2 n}\right)+\ln (2 n)}{n^2}}\right)\approx 
\exp\left({\frac{-\frac{1}{2 n}+\ln (2 n)}{n^2}}\right)=\]
\[
\exp\left(-\frac{1}{2 n^3}+\frac{\ln 2 }{n^2}+\frac{\ln ( n)}{n^2}\right)\approx 
\exp\left(\frac{\ln ( n)}{n^2}\right)\approx 1+\frac{\ln ( n)}{n^2}.\]
Substituting this approximation into (\ref{heuristicnun*}) yields 
\[
\nu_n^*\approx 
\left(\frac{2 n-1}{\left|n \left(-1-\frac{\ln(n)}{n^2}\right)^{1-2 n}+n-1\right|}\right)^{1/n^2}= 
\left(\frac{2 n-1}{n\left(1-\left(1+\frac{\ln ( n)}{n^2}\right)^{1-2 n}-\frac{1}{n}\right)}\right)^{1/n^2}=\]
\[
\exp\left(\frac{\ln\left(2-\frac{1}{n}\right)}{n^2}-\frac{1}{n^2}\ln\left(1-\left(1+\frac{\ln ( n)}{n^2}\right)^{1-2 n}-\frac{1}{n}\right)\right)\approx
\]
\[
\exp\left(-\frac{1}{n^2}\ln\left(1-\frac{1}{n}-\left(1+\frac{\ln ( n)}{n^2}\right)^{1-2 n}\right)\right)=
\]
\[
\exp\left(-\frac{1}{n^2}\ln\left(1-\frac{1}{n}-\exp\left((1-2n)\ln\left(1+\frac{\ln ( n)}{n^2}\right)\right)\right)\right)\approx
\]
\[
\exp\left(-\frac{1}{n^2}\ln\left(1-\frac{1}{n}-\exp\left((1-2n)\frac{\ln ( n)}{n^2}\right)\right)\right)=
\]
\[
\exp\left(-\frac{1}{n^2}\ln\left(1-\frac{1}{n}-\exp\left(\frac{\ln ( n)}{n^2}-\frac{2\ln ( n)}{n}\right)\right)\right)\approx
\]
\[
\exp\left(-\frac{1}{n^2}\ln\left(1-\frac{1}{n}-1-\frac{\ln ( n)}{n^2}+\frac{2\ln ( n)}{n}\right)\right)\approx
\exp\left(-\frac{1}{n^2}\ln\left(\frac{2\ln ( n)}{n}\right)\right)=
\]
\[
\exp\left(-\frac{\ln 2}{n^2}-\frac{1}{n^2}\ln\left(\frac{\ln ( n)}{n}\right)\right)\approx
\exp\left(-\frac{1}{n^2}\ln\left(\frac{\ln ( n)}{n}\right)\right)=
\]
\[
\exp\left(\frac{\ln(n)-\ln \ln(n)}{n^2}\right)\approx 1+\frac{\ln(n)-\ln \ln(n)}{n^2}=1+\frac{\ln(n)}{n^2}-\frac{\ln \ln(n)}{n^2}.
\]
By iterating this process further, we can get finer and finer asymptotic estimates. For example,
\[
\nu_n^* \approx 
1+\frac{\ln(n)}{n^2}-\frac{\ln(\ln(n))}{n^2}+\frac{\ln(\ln
   (n))}{n^2 \ln(n)}-\frac{\ln(\ln(n))}{n^2 \ln^2(n)}.
\]
\end{rem}

\begin{rem}\label{spikypolyremark}
It can be shown that the sequence $\nu_n^*$ is strictly decreasing for $n\ge 2$. During the proof
of this statement we discovered the following interesting family of polynomials. Let us choose
and fix an arbitrary integer $k\ge 1$, and consider the function
\[
\Real \ni x\mapsto (4 k^2-k)x^{8 k}+(8 k^2-1)x^{4 k-1}+4 k^2+k.
\]
It can be shown that the above $3$-term polynomial first strictly decreases, has a unique and 
positive global minimum, then strictly increases. 
 However, as $k$ is increased, the ``spike'' near $x=-1$ becomes narrower: 
Figure \ref{spikypoly} shows a typical polynomial of this class
on 3 scales.
Hence members of this family can be used, for example, as arbitrarily hard
test examples---with the same, simple structure---for numerical optimizers or solvers.
\end{rem}

\begin{figure}
\begin{center}
\subfigure[Restriction to the interval $(-1.3,1.3)$]{
\includegraphics[width=0.4\textwidth]{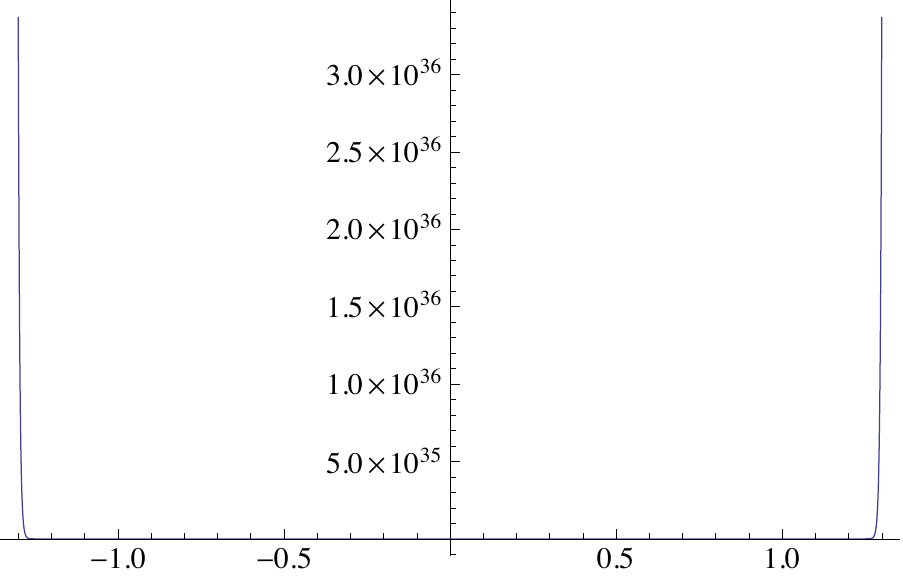}}
\subfigure[In the window $(-1.3,1.3)\times (0,6000)$]{
\includegraphics[width=0.4\textwidth]{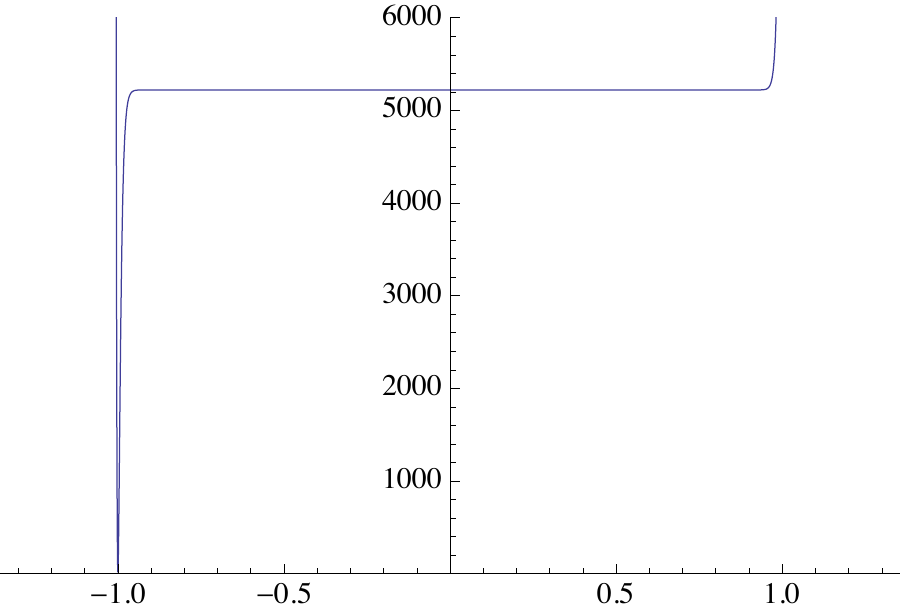}}
\subfigure[And around the global minimum]{
\includegraphics[width=0.4\textwidth]{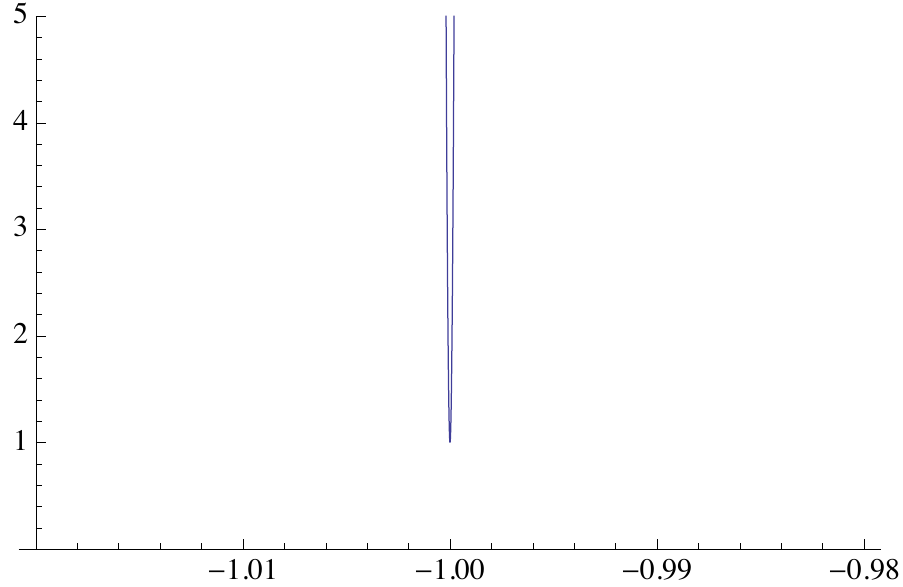}}
\caption{The polynomial $5148 x^{288}+10367 x^{143}+5220$ 
from Remark \ref{spikypolyremark} with $k=36$.
\label{spikypoly}}
\end{center}
\end{figure}

\subsubsection{Estimating $\M_s^{\mathrm{SSP3}}(\Ss_s)$}\label{estimatingMssp3subsection}

The entries in Table \ref{exactMsSSP3values} can be obtained by 
representing $\nu_n^*$ as an algebraic number given by Lemma \ref{uniquerootlemma},  
approximating it to any desired precision 
(from below or above) with \textit{Mathematica}, then substituting that value into
(\ref{MsSSP3upperintermsofnunstar}). From this we also verify that for $2\le n\le 10$
we have the simpler representation
(\ref{theorem35nunstarrepresentation}) instead of (\ref{MsSSP3upperintermsofnunstar}).
To conclude Section \ref{optimalthirdorderSSPmethods}, it is therefore enough to prove
(\ref{theorem35nunstarrepresentation}) and the chain of inequalities in Theorem \ref{thm:SSP3} for $n\ge 9$. 

\begin{lem}\label{MsSSP3withoutmax}
For any $n\ge 9$ and $s=n^2$, we have
\[
\M_s^{\mathrm{SSP3}}(\Ss_s)= \left(\nu_n^*\right)^{(n^2-n)/2}.
\]
\end{lem}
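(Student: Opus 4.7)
The plan is to start from identity (\ref{MsSSP3upperintermsofnunstar}), which says
\[
\M_s^{\mathrm{SSP3}}(\Ss_s) = \max\left\{\frac{n-1}{2n-1}(\nu_n^*)^{(n^2+3n-4)/2},\ (\nu_n^*)^{(n^2-n)/2}\right\},
\]
and show that for $n\ge 9$ the second argument of the $\max$ dominates. Since the exponents differ by $2n-2$, this is equivalent to proving
\[
\frac{n-1}{2n-1}(\nu_n^*)^{2n-2} \le 1, \quad\text{i.e.,}\quad (\nu_n^*)^{2n-2} \le \frac{2n-1}{n-1}=2+\frac{1}{n-1}.
\]

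To establish this I would apply the upper bound on $\nu_n^*$ from Lemma \ref{nunstarfinersandwiching}, valid for $n\ge 9$: $\nu_n^* < \lambda_n = 1 + \frac{\ln n}{n^2} - \frac{\ln\ln n}{8n^2}$. Since the subtracted term is positive for $n\ge 3$, we have $\lambda_n \le 1 + \frac{\ln n}{n^2}$. Taking logarithms and using the elementary inequality $\ln(1+x)\le x$ for $x>-1$, we obtain
\[
(2n-2)\ln \lambda_n \le (2n-2)\cdot\frac{\ln n}{n^2} < \frac{2\ln n}{n}.
\]

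Next, the function $n\mapsto \frac{2\ln n}{n}$ is strictly decreasing for $n\ge 3$ (its derivative $\frac{2(1-\ln n)}{n^2}$ is negative there), so for every $n\ge 9$,
\[
\frac{2\ln n}{n}\le \frac{2\ln 9}{9}<\ln 2\le \ln\left(2+\frac{1}{n-1}\right).
\]
The numerical check $\frac{2\ln 9}{9}\approx 0.488 < 0.693 \approx \ln 2$ is the only concrete inequality required. Combining the displays gives $\lambda_n^{2n-2} < 2+\frac{1}{n-1}$, and hence $(\nu_n^*)^{2n-2} < \lambda_n^{2n-2} < \frac{2n-1}{n-1}$, so $\frac{n-1}{2n-1}(\nu_n^*)^{2n-2}<1$, which is precisely what we needed to conclude that the maximum in (\ref{MsSSP3upperintermsofnunstar}) is attained by the second term.

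The main obstacle is conceptually minor: one only needs the asymptotic bound from Lemma \ref{nunstarfinersandwiching} to control $\nu_n^*$ sufficiently tightly. The numerical margin is generous (roughly $0.488$ against $0.693$ at the boundary case $n=9$, with both quantities only getting more favorable as $n$ grows), so no delicate asymptotic analysis is needed here. Essentially the lemma is a bookkeeping reduction: once we know $\nu_n^*$ is barely above $1$, the factor $\frac{n-1}{2n-1}<\tfrac12$ easily compensates for the extra $(\nu_n^*)^{2n-2}$ growth in the first term.
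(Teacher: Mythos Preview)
Your proof is correct and follows essentially the same route as the paper: both start from (\ref{MsSSP3upperintermsofnunstar}), reduce the comparison of the two terms to $(\nu_n^*)^{2n-2}<\frac{2n-1}{n-1}$, invoke the upper bound $\nu_n^*<\lambda_n$ from Lemma~\ref{nunstarfinersandwiching}, and finish with the elementary check that $\frac{2\ln n}{n}<\ln 2$ for $n\ge 9$. Your version is slightly more streamlined in that you discard the $-\frac{\ln\ln n}{8n^2}$ term at the outset, whereas the paper carries it a bit further before dropping negative terms; the resulting endgame inequality is marginally cruder ($\frac{2\ln n}{n}$ versus the paper's $\frac{2(n-1)\ln n}{n^2}$), but the numerical margin at $n=9$ is ample either way.
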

\begin{proof} Since the inequality
\[
(\nu_n^*)^{\left(n^2-n\right)/2}>\frac{n-1}{2n-1}(\nu_n^*)^{\left(n^2+3 n-4\right)/2}
\]
is equivalent to 
\[
\nu_n^*<\left(\frac{2 n-1}{n-1}\right)^{1/(2n-2)},
\]
by taking into account (\ref{nunstarupperestimate}), it is enough to show that for $n\ge 9$ we
have
\[
1+\frac{\ln (n)}{n^2}-\frac{\ln (\ln (n))}{8n^2}\le \left(\frac{2 n-1}{n-1}\right)^{1/(2n-2)}.
\]
By using $1+x\le e^x$ ($x\in\Real$) and $\frac{2 n-1}{n-1}>2$, it is sufficient to verify that
$
\exp\left(\frac{\ln (n)}{n^2}-\frac{\ln (\ln (n))}{8n^2}\right)\le 
2^{1/(2n-2)}
$, or, in other words, that
\[
-\frac{2 \ln (n)}{n^2}+\frac{\ln (\ln (n))}{4 n^2}+\frac{2 \ln (n)}{n}-\frac{\ln (\ln (n))}{4
   n}\le \ln (2).
\]
But $\frac{\ln (\ln (n))}{4 n^2}-\frac{\ln (\ln (n))}{4n}<0$, so a stronger inequality is
$
-\frac{2 \ln (n)}{n^2}+\frac{2 \ln (n)}{n}<\ln (2),
$
which is equivalent to $0<n^2 \ln (2)-2 (n-1) \ln(n)$. The proof is finished by noting that the function on this right-hand side
is monotone increasing and its value at $n=9$ is positive.
\end{proof}

Now by combining Lemma \ref{MsSSP3withoutmax} and Lemma \ref{nunstarfinersandwiching},
we have proved 
\[
\left(1+\frac{\ln (n)}{n^2}-\frac{\ln (\ln (n))}{n^2}\right)^{\left(n^2-n\right)/2}<\M_{n^2}^{\mathrm{SSP3}}<\left(1+\frac{\ln (n)}{n^2}-\frac{\ln (\ln (n))}{8 n^2}\right)^{\left(n^2-n\right)/2}
\]
in  Theorem \ref{thm:SSP3}. The remaining two auxiliary inequalities in 
Theorem \ref{thm:SSP3} are shown below.

\begin{lem}\label{auxiliaryLemma3.18} For any $n\ge 9$ we have
\begin{equation}\label{lemma3181}
\left(1+\frac{\ln (n)}{n^2}-\frac{\ln (\ln (n))}{8 n^2}\right)^{\left(n^2-n\right)/2}<\frac{\sqrt{n}}{\sqrt[16]{\ln (n)}}
\end{equation}
and
\begin{equation}\label{lemma3182}
\frac{9}{10} \sqrt{\frac{n}{\ln (n)}}<\left(1+\frac{\ln (n)}{n^2}-\frac{\ln (\ln (n))}{n^2}\right)^{\left(n^2-n\right)/2}.
\end{equation}
\end{lem}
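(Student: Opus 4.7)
The plan is to prove each of the two inequalities in Lemma \ref{auxiliaryLemma3.18} by taking natural logarithms on both sides and applying the standard estimates $\ln(1+x) \le x$ (valid for $x > -1$) and $\ln(1+x) \ge x - \frac{x^2}{2}$ (valid for $x \ge 0$), which are already used elsewhere in Section \ref{explicitestimatesnunstarsubsection} (see (\ref{logupperestimate}) and (\ref{loglowerestimate})).

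For inequality (\ref{lemma3181}), I would set $x_1 := \frac{\ln n}{n^2} - \frac{\ln\ln n}{8 n^2}$, which is positive for $n \ge 9$, and apply $\ln(1+x_1) \le x_1$. After multiplying by $\frac{n^2-n}{2}$, taking the logarithm of the right-hand side $\frac{\sqrt{n}}{\sqrt[16]{\ln n}}$ (which equals $\frac{\ln n}{2} - \frac{\ln\ln n}{16}$), and subtracting, the claim reduces to the trivial inequality
\[
\frac{\ln\ln n}{16 n} \le \frac{\ln n}{2 n},
\]
equivalently $\ln\ln n \le 8 \ln n$, which is obvious for $n \ge 9$. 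This part is completely routine.

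For inequality (\ref{lemma3182}), I would set $x_2 := \frac{\ln n - \ln\ln n}{n^2} > 0$ (valid since $\ln n > \ln\ln n$ for $n \ge 3$) and apply the lower bound $\ln(1+x_2) \ge x_2 - \frac{x_2^2}{2}$. Taking logarithms of both sides of (\ref{lemma3182}), multiplying through by $\frac{n^2-n}{2}$ and rearranging, the desired inequality becomes
\[
\frac{\ln n - \ln\ln n}{2n} + \frac{(n-1)(\ln n - \ln\ln n)^2}{4 n^3} \le \ln\!\left(\frac{10}{9}\right).
\]
This is the main obstacle: the numerical constants are tight enough ($\ln(10/9) \approx 0.1054$) that a crude estimate will not suffice.

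I would handle this reduced inequality in two steps. First, a direct check at $n=9$ shows the left-hand side is approximately $0.0838$, comfortably below $\ln(10/9)$. Second, I would show both summands on the left are monotone decreasing for $n \ge 9$ by elementary calculus: for the first summand, differentiating $n \mapsto \frac{\ln n - \ln\ln n}{n}$ produces $\frac{1 - 1/\ln n - \ln n + \ln\ln n}{n^2}$, whose numerator is negative and itself decreasing for $n \ge 9$ (a one-derivative check); for the second summand, the factor $\frac{n-1}{n^3}$ decreases like $n^{-2}$ while $(\ln n - \ln\ln n)^2$ grows only logarithmically, and a direct computation of the derivative shows it is negative for $n \ge 9$. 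Combining monotonicity with the base case $n=9$ completes the proof of (\ref{lemma3182}), and the lemma follows.
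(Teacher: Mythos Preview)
Your proof is correct and follows essentially the same route as the paper: both parts use the estimates $\ln(1+x)\le x$ and $\ln(1+x)\ge x-\tfrac{x^2}{2}$, and for (\ref{lemma3182}) both reduce to showing that $-A_n=\frac{\ln n-\ln\ln n}{2n}+\frac{(n-1)(\ln n-\ln\ln n)^2}{4n^3}<\ln\tfrac{10}{9}$ via a base-case check plus monotonicity. The only cosmetic difference is that the paper differentiates $-A_n$ as a single expression (writing the derivative as $\frac{1}{4n^4\ln n}(a_n n^2+b_n n+c_n)$), whereas you split $-A_n$ into its two summands and show each is decreasing separately; your decomposition is arguably a bit cleaner.
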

\begin{proof}
We estimate the left-hand side of (\ref{lemma3181}) as follows:
\[
\left( 1+\frac{\ln (n)}{n^2}-\frac{\ln (\ln (n))}{8n^2} \right)^{(n^2-n)/2}<\left( 1+\frac{\ln (n)}{n^2}-\frac{\ln (\ln (n))}{8n^2} \right)^{n^2/2}\le\]
\[ 
\exp\left(\frac{n^2}{2}\left(\frac{\ln (n)}{n^2}-\frac{\ln (\ln (n))}{8n^2} \right)\right) =\frac{\sqrt{n}}{\sqrt[16]{\ln (n)}},
\]
by taking into account that $1+\frac{\ln (n)}{n^2}-\frac{\ln (\ln (n))}{8n^2}>1$, and $1+x\le e^x$ again. As for the right-hand side of (\ref{lemma3182}), we write it as
$\exp\left(\frac{n^2-n}{2} \ln\left( 1+\frac{\ln (n)}{n^2}-\frac{\ln (\ln (n))}{n^2}\right)\right)$, then use $1+\frac{\ln (n)}{n^2}-\frac{\ln (\ln (n))}{n^2}>1$ with  (\ref{loglowerestimate}) to get
\[
\left(1+\frac{\ln (n)}{n^2}-\frac{\ln (\ln (n))}{n^2}\right)^{\left(n^2-n\right)/2}>\]
\begin{equation}\label{eansqrtnlnn}
\exp\left(\frac{n^2-n}{2} \left(\frac{\ln (n)}{n^2}-\frac{\ln (\ln (n))}{n^2}-\frac{1}{2} \left(\frac{\ln (n)}{n^2}-\frac{\ln (\ln(n))}{n^2}\right)^2\right)\right)=
e^{A_n}\cdot\sqrt{\frac{n}{\ln (n)}},
\end{equation}
with
\begin{equation}\label{Andefn}
A_n:=-\frac{1}{4 n^3}  [\ln (n)-\ln (\ln (n))] \left(2 n^2+(n-1) [\ln (n)- \ln(\ln (n))]\right).
\end{equation}
Since $e^{A_9}>e^{A_7}>\frac{9}{10}$, the proof will be completed as soon as we have shown
that $-A_n$ is strictly decreasing for $n\ge 7$. The derivative of $-A_n$ with respect to $n$ 
is given by
$
\frac{1}{{4 n^4 \ln(n)}}(a_n n^2+b_n n +c_n),
$
where \[a_n:= -2 \ln (n) [\ln(n)-\ln (\ln (n))-1]-2\]
\[b_n:=-2 \ln (n) \left[\ln ^2(n)-\ln
   (n)+1\right]-2 \ln(\ln (n)) \left[-2 \ln ^2(n)+\ln (\ln (n)) \ln (n)+\ln
   (n)-1\right]\]
and
\[
c_n:=\ln (n) \left[3 \ln ^2(n)-2 \ln (n)+2\right]+\ln (\ln (n)) \left[-6 \ln
   ^2(n)+3 \ln (\ln (n)) \ln (n)+2 \ln (n)-2\right].
\]
From these expressions one can prove that $a_n n^2+b_n n +c_n<0$ for $n\ge 7$; the 
elementary details here are omitted.
\end{proof}

\begin{rem}\label{section3closingremark}
Trivially, $A_n\to 0$ ($n\to +\infty$) with $A_n$ defined in (\ref{Andefn}), so by (\ref{eansqrtnlnn}) they imply
\[
\frac{\M_{n^2}^{\mathrm{SSP3}}}{\sqrt{\frac{n}{\ln (n)}}}>
\frac{\left(1+\frac{\ln (n)}{n^2}-\frac{\ln (\ln (n))}{n^2}\right)^{\left(n^2-n\right)/2}}{\sqrt{\frac{n}{\ln (n)}}}>e^{A_n}\to 1 \quad (n\to +\infty),
\]
in other words, 
\[
\liminf_{n\to +\infty} \frac{\M_{n^2}^{\mathrm{SSP3}}}{\sqrt{\frac{n}{\ln (n)}}}\ge 1.
\]
\end{rem}

\section{Internal amplification factors of extrapolation methods\label{sec:extrap}}
In this section we give values, estimates, and bounds for the maximum internal
amplification factor  
(\ref{Mmaximalinternalamplificationfactor}) for two classes of extrapolation methods:  
explicit Euler (EE) extrapolation and explicit midpoint (EM) extrapolation, both of which 
can be interpreted as explicit Runge--Kutta methods.  
We write $\MEEp$ ($\MEMp$) to denote the maximum internal amplification factor of the
order $p$ explicit Euler (explicit midpoint) extrapolation method.
We first summarize the structure and the main results of Section \ref{sec:extrap}. 

The extrapolation
algorithms are given as \textbf{Algorithm \ref{alg:extrap}} and
\textbf{Algorithm \ref{alg:extrapm}} in Section \ref{subsection41theextrapolationalgorithm}. Then
the internal stability polynomials are
described in Section \ref{section41InternalstabilityPolynomials}. 
Section \ref{extrapolshapeofSp} gives information on the absolute stability
region $\Ss_p$. The internal stability polynomials are estimated on 
certain subsets of $\Ss_p$ in Section \ref{subsection43boundsonMpEE} and 
Section \ref{subsection43boundsonMpEM}; first over all of $\Ss_p$.
We know that $\Ss_p$ may include regions far into the right
half-plane.  In order to achieve tighter practical bounds, we also focus on the
value of the internal stability polynomials over $\Ss_p \cap \Cleft$, where $\Cleft$ denotes 
the set of complex numbers with negative or zero
real part. To understand the internal stability of the methods with very small step sizes, we investigate a third quantity,  
$\M^{\mathrm{EE}}_p(\{0\})$ and $\M^{\mathrm{EM}}_p(\{0\})$
as well. 

Using the explicit expressions for the internal stability polynomials, one can employ \textit{Mathematica} to
compute exact values of $\M_p({\mathcal{S}})$ for small to moderate values of $p$,
for both extrapolation methods, and for the sets ${\mathcal{S}}=\Ss_p$, ${\mathcal{S}}=\Ss_p\cap \Cleft$ or ${\mathcal{S}}=\{0\}$. Decimal approximations of these exact values are given in 
several tables. The values for Euler extrapolation
corroborate the behavior observed with this method in earlier sections.
The values for midpoint extrapolation show that it has much better internal
stability. Then we give upper (and sometimes lower) bounds on the amplification factors for arbitrary $p$ values. For $p$ sufficiently large, some improved estimates are also provided. We indicate certain conjectured asymptotically optimal growth rates as well.
Although neither of the proved upper bounds is tight, these theorems again suggest
that midpoint extrapolation is much more internally stable than EE extrapolation.

\subsection{The extrapolation algorithm}\label{subsection41theextrapolationalgorithm}

The extrapolation algorithm (\cite[Section II.9]{Hairer2010}) consists of two parts. 
In the first part, a \textit{base method} and a \textit{step-number sequence} are chosen.  
In general, the step-number sequence is a strictly monotone increasing sequence
of positive integers 
$n_j$ ($j=1,2, \ldots$).  The base method is applied to compute multiple approximations to the ODE 
solution at time $t_{n+1}$ based on the solution value at $t_n$: the first approximation, denoted 
by $T_{1,1}$, is obtained by dividing the interval $t_{n+1}-t_n$ into $n_1$ step(s), the second 
approximation, $T_{2,1}$ is obtained by dividing it into $n_2$ steps, and so on. In the second 
part of the extrapolation algorithm, these $T_{m,1}$ values (that is, the low-order approximations) 
are combined by using the \textit{Aitken--Neville interpolation formula} to get a higher-order
approximation to the ODE solution at time $t_{n+1}$.

Here, the base method is the explicit 
Euler method (\textbf{Algorithm \ref{alg:extrap}}) or the explicit midpoint rule
(\textbf{Algorithm \ref{alg:extrapm}}), and the step-number sequence is chosen to be the 
harmonic sequence $n_j:=j$, being the ``most economic'' (\cite[formula (9.8)]{Hairer2010}). 
In the EM case, we also assume that the desired order of
accuracy $p$ is even.  We do not consider the effects of smoothing \cite{Hairer2010}.

\begin{algorithm}\caption{explicit Euler extrapolation}
\label{alg:extrap}
\begin{algorithmic}

\For{$m = 1 \to p$}  \Comment{Compute first order approximations}
    \State $\y_{m,0} = \un$
    \For{$j=1 \to m$}
        \State $\y_{m,j} = \y_{m,j-1} + \frac{\dt}{m}\f(\y_{m,j-1})$
    \EndFor
    \State $T_{m,1} = \y_{m,m}$
\EndFor

\For{$k=2 \to p$}  \Comment{Extrapolate to get higher order}
    \For{$j=k \to p$}
        \State $T_{j,k} = T_{j,k-1} + \frac{T_{j,k-1}-T_{j-1,k-1}}{\frac{j}{j-k+1}-1}$
        \Comment{Aitken--Neville formula for extrapolation to order $k$}
    \EndFor
\EndFor
\State $\unone = T_{p,p}$ \Comment{New solution value}
\end{algorithmic}
\end{algorithm}

\begin{algorithm}\caption{explicit midpoint extrapolation}
\label{alg:extrapm}
\begin{algorithmic}
\State $r=\frac{p}{2}$
\For{$m = 1 \to r$}  \Comment{Compute second-order approximations}
    \State $\y_{m,0} = \un$
    \State $\y_{m,1} = \y_{m,0} + \frac{\dt}{2m}\f(\y_{m,0})$ \Comment{Initial Euler step}
    \For{$j=2 \to 2m$}
        \State $\y_{m,j} = \y_{m,j-2} + \frac{\dt}{m}\f(\y_{m,j-1})$  \Comment{Midpoint steps}
    \EndFor
    \State $T_{m,1} = \y_{m,2m}$
\EndFor

\For{$k=2 \to r$}  \Comment{Extrapolate to get higher order}
    \For{$j=k \to r$}
        \State $T_{j,k} = T_{j,k-1} + \frac{T_{j,k-1}-T_{j-1,k-1}}{\frac{j^2}{(j-k+1)^2}-1}$
        \Comment{Aitken--Neville formula for extrapolation to order $2k$}
    \EndFor
\EndFor
\State $\unone = T_{r,r}$ \Comment{New solution value}
\end{algorithmic}
\end{algorithm}


\subsection{The internal stability polynomials}\label{section41InternalstabilityPolynomials}
By analyzing the perturbed scheme along the lines of 
\textbf{Algorithm \ref{alg:extrap}} and \textbf{Algorithm \ref{alg:extrapm}}, 
we will get the internal stability polynomials $Q_{p,m,\ell}$---this time it is natural to use two indices, 
say, $m$ and $\ell$, to label them, with the dependence on $p$ also indicated.
As in earlier sections, the analysis is carried out by choosing $\f(\uu):=\lambda \uu$ and $z:=\lambda\dt$.

Then, in the EE extrapolation case,  for $1\le m \le p$ and $1\le j\le m$ we have $\ty_{m,0}:=\un$, 
$\tr_{m,0}:=0$ and 
\[\ty_{m,j}:=\left(1+\frac{z}{m}\right)\ty_{m,j-1}+\tr_{m,j}.\]
This implies that
\begin{equation}\label{Tm1EEextrapolation}
T_{m,1}^{\mathrm{EE}}\equiv\ty_{m,m}=\left(1+\frac{z}{m}\right)^m \un+
\sum_{\ell=1}^m \left(1+\frac{z}{m}\right)^{m-\ell} \tr_{m,\ell}.
\end{equation}

As for the EM extrapolation, $p$ is assumed to be even, $r:=\frac{p}{2}$ and for $1\le m \le r$ and $2\le j\le 2m$ 
we have $\ty_{m,0}:=\un$, $\tr_{m,0}:=0$, $\ty_{m,1}:=\left(1+\frac{z}{2m}\right)\ty_{m,0}+\tr_{m,1}$ and 
\[\ty_{m,j}:=\ty_{m,j-2}+\frac{z}{m}\ty_{m,j-1}+\tr_{m,j}.\]
For any $1\le m \le r$, let us introduce an auxiliary sequence $q_{m,j}(z)$ by $q_{m,0}(z):=0$, $q_{m,1}(z):=1$, and 
\begin{equation}\label{littleqEM}
q_{m,j}(z):=\frac{z}{m}q_{m,j-1}(z)+q_{m,j-2}(z) \quad\quad (2\le j\le 2m).
\end{equation}  
Then, it can be proved that  
\begin{equation}\label{Tm1EMextrapolation}
T_{m,1}^{\mathrm{EM}}\equiv\ty_{m,2m}=
 (-1)^m \left({\Large{\texttt T}}_{2 m}\left(\frac{i z}{2 m}\right)+
i {\Large{\texttt U}}_{2 m-1}\left(\frac{i z}{2 m}\right)\right)\un+
\sum_{\ell=1}^{2m} q_{m,2m-\ell+1}(z) \, \tr_{m,\ell},
\end{equation} 
where ${\Large{\texttt T}}_{\ell}\left(\cdot\right)$ and ${\Large{\texttt U}}_{\ell}\left(\cdot \right)$
denote the $\ell^\mathrm{th}$ Chebyshev polynomials of the first kind and second kind, 
respectively, and $i$ is the imaginary unit. We are only interested in the coefficients of the 
$\tr_{m,\ell}$ quantities, hence no auxiliary computations to derive the first part of 
the expression, $(-1)^m(\ldots)\un$, are presented here. We remark that the $q_{m,j}$ polynomials can 
be expressed in a similar way, for example, as
$q_{m,j}(z)=(-i)^{j-1} {\Large{{\texttt  U}}}_{j-1}\left(\frac{i z}{2 m}\right)$,
 but we will not need this form later.\\

As a next step in both algorithms, the Aitken--Neville interpolation formula is
used. Therefore, an explicit 
form of $T_{j,k}$ together with the two special cases we 
are interested in are given. Suppose that a general step-number sequence $n_j$ ($j=1,2, \ldots$) and arbitrary starting values 
$T_{m,1}$ ($m\ge 1$) have been chosen. Then, the $T_{j,k}$ values are recursively defined by the 
Aitken--Neville formula as  
\begin{equation}\label{aitken-neville}
T_{j,k}:=T_{j,k-1}+\frac{T_{j,k-1}-T_{j-1,k-1}}{\frac{n_j}{n_{j-k+1}}-1},\quad \mathrm{for\ } j\ge 2\mathrm{\ and\ }  2\le k\le j.
\end{equation}
\begin{lem}
Suppose the sequence $T_{j,k}$ is defined by (\ref{aitken-neville}). Then for any $j\ge1$ and 
$1\le k \le j$ we have that
\[
T_{j,k}= \sum_{m=j-k+1}^{j}\left(\ \prod_{ {\ell=j-k+1,  \ell\ne m}}^{j} \frac{n_m}{n_m-n_\ell}\ \right) T_{m,1}.
\]
\end{lem}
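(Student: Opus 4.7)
The plan is to prove the formula by induction on $k$, the extrapolation ``column index''. The base case $k=1$ is trivial: the product is empty, so the right-hand side reduces to $T_{j,1}$ itself, matching the convention that $T_{j,1}$ is prescribed.

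For the inductive step, I would first rewrite the recurrence (\ref{aitken-neville}) in the convenient combination form
\[
T_{j,k}=\frac{n_j}{n_j-n_{j-k+1}}\,T_{j,k-1}-\frac{n_{j-k+1}}{n_j-n_{j-k+1}}\,T_{j-1,k-1},
\]
obtained by collecting the factor $1+1/(\frac{n_j}{n_{j-k+1}}-1)=\frac{n_j}{n_j-n_{j-k+1}}$. Then I would insert the inductive hypothesis for $T_{j,k-1}$ (a sum over $m=j-k+2,\dots,j$) and for $T_{j-1,k-1}$ (a sum over $m=j-k+1,\dots,j-1$) and compare coefficients of $T_{m,1}$ with the claimed expression.

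Three cases arise when matching coefficients of $T_{m,1}$. When $m=j$ or $m=j-k+1$, only one of the two sums contributes, and one checks directly that the missing factor $\frac{n_m}{n_m-n_{j-k+1}}$ (respectively $\frac{n_m}{n_m-n_j}$) is provided by the prefactor $\frac{n_j}{n_j-n_{j-k+1}}$ (respectively $-\frac{n_{j-k+1}}{n_j-n_{j-k+1}}=\frac{n_{j-k+1}}{n_{j-k+1}-n_j}$), extending the product over $\ell=j-k+2,\dots,j$ (resp.\ $\ell=j-k+1,\dots,j-1$) to a product over $\ell=j-k+1,\dots,j$, $\ell\ne m$. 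For the generic case $j-k+2\le m\le j-1$, both sums contribute; after factoring out the common product $\prod_{\ell=j-k+2,\,\ell\ne m}^{j-1}\frac{n_m}{n_m-n_\ell}$, everything reduces to the single scalar identity
\[
\frac{n_j}{n_j-n_{j-k+1}}\cdot\frac{n_m}{n_m-n_j}-\frac{n_{j-k+1}}{n_j-n_{j-k+1}}\cdot\frac{n_m}{n_m-n_{j-k+1}}=\frac{n_m}{n_m-n_j}\cdot\frac{n_m}{n_m-n_{j-k+1}},
\]
which after clearing denominators is the one-line algebraic identity $n_j(n_m-n_{j-k+1})-n_{j-k+1}(n_m-n_j)=n_m(n_j-n_{j-k+1})$.

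The only mild obstacle is bookkeeping: one must keep the index ranges of the two products straight so that the ``extra'' factor inserted by the prefactor in each case is precisely the one missing from the inductive-hypothesis product. Once this is set up cleanly, the three subcases are immediate, and the induction closes. (This is the classical derivation of the Lagrange form of the Aitken--Neville interpolant when one views $T_{m,1}$ as the values being interpolated and $n_m$ as the abscissae, but for this paper only the algebraic identity is needed.)
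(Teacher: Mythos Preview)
Your proof is correct. The paper actually states this lemma without proof, treating it as a standard fact about the Aitken--Neville scheme, so there is no ``paper's own proof'' to compare against. Your induction on $k$ is the natural way to establish it: the rewriting of the recurrence as
\[
T_{j,k}=\frac{n_j}{n_j-n_{j-k+1}}\,T_{j,k-1}-\frac{n_{j-k+1}}{n_j-n_{j-k+1}}\,T_{j-1,k-1}
\]
is correct, the three-case coefficient comparison is set up properly, and the scalar identity you isolate for the interior indices $j-k+2\le m\le j-1$ does indeed collapse to $n_j(n_m-n_{j-k+1})-n_{j-k+1}(n_m-n_j)=n_m(n_j-n_{j-k+1})$. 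Your closing remark is also apt: this is exactly the passage from the Neville recursion to the Lagrange form of the interpolating polynomial (here evaluated at the origin in the variable $1/n$), which is why the paper feels free to omit the argument.
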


\begin{cor}\label{Tm1EEcorollary}
In the EE extrapolation algorithm with $n_j=j$, $T_{p,p}$ ($p\ge 1$) defined by (\ref{aitken-neville}) 
can be written as
\[
T_{p,p}=\sum _{m=1}^p \frac{(-1)^{m+p} m^{p-1} }{(p-m)! (m-1)!}T_{m,1}.
\]
\end{cor}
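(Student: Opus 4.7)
The plan is to obtain the corollary as a direct specialization of the preceding lemma. I would set $j = k = p$ and plug in the harmonic step-number sequence $n_j = j$, which converts the range $m = j-k+1,\ldots,j$ into $m = 1,\ldots,p$ and turns the lemma's formula into
\[
T_{p,p} = \sum_{m=1}^{p}\left(\prod_{\ell=1,\,\ell\ne m}^{p} \frac{m}{m-\ell}\right) T_{m,1}.
\]
All that remains is to evaluate each of these products in closed form.

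For this, I would split the product at the missing index $m$. The numerator contributes $m^{p-1}$, since there are $p-1$ factors equal to $m$. For the denominator $\prod_{\ell=1,\,\ell\ne m}^{p}(m-\ell)$, the terms with $\ell < m$ yield $\prod_{\ell=1}^{m-1}(m-\ell) = (m-1)!$ directly, while the terms with $\ell > m$ can be reindexed via $k = \ell - m$ to give $\prod_{k=1}^{p-m}(-k) = (-1)^{p-m}(p-m)!$. Combining these and using the parity identity $(-1)^{p-m} = (-1)^{p+m}$ produces the coefficient $\dfrac{(-1)^{m+p} m^{p-1}}{(m-1)!(p-m)!}$, which is exactly what the corollary asserts.

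There is no genuine obstacle here: the corollary is pure index bookkeeping, and the only point requiring care is tracking the sign contribution from the $\ell > m$ factors in the denominator. The real content of the statement is already contained in the preceding lemma.
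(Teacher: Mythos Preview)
Your proposal is correct and matches the paper's approach exactly: the paper states this result as an immediate corollary of the preceding lemma with no separate proof, and your specialization $j=k=p$, $n_j=j$ together with the straightforward evaluation of the product is precisely the intended computation.
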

\begin{cor}\label{Tm1EMcorollary}
In the EM extrapolation algorithm with $n_j=j^2$, $T_{r,r}$ ($r\ge 1$) defined by (\ref{aitken-neville}) 
takes the form
\[
T_{r,r}=\sum _{m=1}^r \frac{2(-1)^{m+r} m^{2r} }{(r-m)! (r+m)!}T_{m,1}.
\]
\end{cor}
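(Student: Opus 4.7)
The plan is to apply the general lemma with $j=k=r$ and $n_j = j^2$, then simplify the resulting product in closed form using factorials. Specifically, substituting these choices into
\[
T_{j,k}= \sum_{m=j-k+1}^{j}\left(\ \prod_{\ell=j-k+1,\, \ell\ne m}^{j} \frac{n_m}{n_m-n_\ell}\ \right) T_{m,1}
\]
yields
\[
T_{r,r} = \sum_{m=1}^{r}\left(\prod_{\ell=1,\,\ell\ne m}^{r} \frac{m^2}{m^2-\ell^2}\right) T_{m,1},
\]
so the task reduces to showing that the bracketed product equals $\dfrac{2(-1)^{m+r} m^{2r}}{(r-m)!\,(r+m)!}$ for each $m\in\{1,\ldots,r\}$.

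The key step is to use the factorization $m^2-\ell^2=(m-\ell)(m+\ell)$ and evaluate the two resulting products separately. For the first factor, I would split the range $\ell\in\{1,\ldots,r\}\setminus\{m\}$ into $\ell<m$ and $\ell>m$:
\[
\prod_{\ell=1,\,\ell\ne m}^{r}(m-\ell) \;=\; (m-1)!\cdot(-1)^{r-m}(r-m)!.
\]
For the second factor, I would start from $\prod_{\ell=1}^{r}(m+\ell)=\dfrac{(m+r)!}{m!}$ and then remove the excluded term $\ell=m$, which contributes $2m$, giving
\[
\prod_{\ell=1,\,\ell\ne m}^{r}(m+\ell) \;=\; \frac{(m+r)!}{2m\cdot m!}.
\]
The numerator of the bracketed product is $m^{2(r-1)}$ since it contains $r-1$ copies of $m^2$. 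Dividing the numerator by the product of the two denominator expressions yields
\[
\frac{m^{2(r-1)}\cdot 2m\cdot m!}{(-1)^{r-m}(m-1)!(r-m)!(m+r)!} \;=\; \frac{2m^{2r}}{(-1)^{r-m}(r-m)!(r+m)!},
\]
where the simplification $m\cdot m!/(m-1)!=m^2$ absorbs the remaining $m$-factors. Finally, using $(-1)^{-(r-m)}=(-1)^{r-m}=(-1)^{r+m}$ converts this into the claimed form and completes the proof.

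There is no real obstacle: the entire argument is a routine bookkeeping exercise once the lemma is in hand. The only place where care is needed is in tracking the sign coming from the negative factors $m-\ell$ with $\ell>m$ and in correctly removing the $\ell=m$ term from the product $\prod_{\ell=1}^{r}(m+\ell)$; both are handled by the computations above.
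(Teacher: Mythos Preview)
Your proof is correct and follows exactly the route the paper intends: the corollary is stated there without details, as an immediate specialization of the general lemma with $j=k=r$ and $n_j=j^2$, and you have simply (and correctly) supplied the factorial bookkeeping that the paper omits.
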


Now we are ready to combine (\ref{Tm1EEextrapolation}) with 
Corollary \ref{Tm1EEcorollary}, and (\ref{Tm1EMextrapolation}) with 
Corollary \ref{Tm1EMcorollary} to get the coefficients of the $\tr_{m,\ell}$ quantities, 
these coefficients being
the internal stability polynomials $Q_{p,m,\ell}$. Thus the following lemmas are proved.

\begin{lem}\label{QEEpmlexplicitformlemma}  For any $p\ge 2$ and $z\in\mathbb{C}$, the internal stability polynomials of the
EE extrapolation method are given by
\[
Q_{p,m,\ell}^{\mathrm{EE}}(z)=\frac{(-1)^{m+p} m^{p-1} }{(p-m)! (m-1)!}
\left(1+\frac{z}{m}\right)^{\ell-1}\quad\quad (1\le m \le p, 1\le \ell \le m).
\] 
\end{lem}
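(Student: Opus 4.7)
The plan is to read off the coefficients of $\tr_{m,\ell}$ after substituting the two ingredients already proved in this subsection: the expansion of $T_{m,1}^{\mathrm{EE}}$ from \eqref{Tm1EEextrapolation} and the closed form for $T_{p,p}$ from Corollary \ref{Tm1EEcorollary}. Since $\tunp = T_{p,p}$ in \textbf{Algorithm \ref{alg:extrap}}, and the Aitken--Neville step is a linear combination of the $T_{m,1}^{\mathrm{EE}}$ (it involves no $\f$-evaluations and hence introduces no new residuals), all dependence of $\tunp$ on the perturbations $\tr_{m,\ell}$ comes through the first-order sub-integrators.

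First I would write
\[
\tunp = \sum_{m=1}^p \frac{(-1)^{m+p}m^{p-1}}{(p-m)!(m-1)!} T_{m,1}^{\mathrm{EE}}
\]
using Corollary \ref{Tm1EEcorollary}, then substitute \eqref{Tm1EEextrapolation} to obtain
\[
\tunp = \sum_{m=1}^p \frac{(-1)^{m+p}m^{p-1}}{(p-m)!(m-1)!}\Bigl(1+\tfrac{z}{m}\Bigr)^m \un
+ \sum_{m=1}^p \sum_{\ell=1}^m \frac{(-1)^{m+p}m^{p-1}}{(p-m)!(m-1)!}\Bigl(1+\tfrac{z}{m}\Bigr)^{m-\ell} \tr_{m,\ell}.
\]
The coefficient of $\tr_{m,\ell}$ in this double sum is, by definition, the internal stability polynomial; hence
\[
Q_{p,m,\ell}^{\mathrm{EE}}(z) = \frac{(-1)^{m+p}m^{p-1}}{(p-m)!(m-1)!}\Bigl(1+\tfrac{z}{m}\Bigr)^{m-\ell},
\]
which, after the reindexing $\ell \leftrightarrow m-\ell+1$ (reflecting that in the lemma's convention the $\ell$-th internal residual of the $m$-th sub-integrator is counted from the end), gives exactly the stated formula with $(1+z/m)^{\ell-1}$.

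There is no real obstacle here: the derivation is just a bookkeeping calculation, and both \eqref{Tm1EEextrapolation} and Corollary \ref{Tm1EEcorollary} are already established. The only point requiring care is the indexing convention for $\ell$ within each sub-integrator, which must be fixed consistently with \eqref{Tm1EEextrapolation}; otherwise the factorial and sign factors read off directly from Corollary \ref{Tm1EEcorollary} and the powers of $(1+z/m)$ read off directly from the telescoping Euler recursion that produces \eqref{Tm1EEextrapolation}.
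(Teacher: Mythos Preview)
Your proof is correct and follows the same approach as the paper: the paper simply says to combine \eqref{Tm1EEextrapolation} with Corollary~\ref{Tm1EEcorollary} and read off the coefficients of $\tr_{m,\ell}$, which is exactly what you do. You additionally flag the indexing discrepancy between the exponent $m-\ell$ that falls out of \eqref{Tm1EEextrapolation} and the exponent $\ell-1$ in the lemma; the paper glosses over this, but your reindexing $\ell\leftrightarrow m-\ell+1$ is the correct resolution (and is harmless for everything downstream, since only the set $\{(1+z/m)^{0},\dots,(1+z/m)^{m-1}\}$ of polynomials matters for the amplification-factor bounds).
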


\begin{lem}\label{QEMpmlexplicitform} For any even $p\ge 2$ and $z\in\mathbb{C}$, the internal stability polynomials of the
EM extrapolation method are given by
\[
Q_{p,m,\ell}^{\mathrm{EM}}(z)=\frac{2(-1)^{m+r} m^{2r} }{(r-m)! (r+m)!}
\,q_{m,2m-\ell+1}(z)\quad\quad (1\le m \le r, 1\le \ell \le 2m)
\] 
with $r=\frac{p}{2}$ and $q_{m,j}$ defined in (\ref{littleqEM}).
\end{lem}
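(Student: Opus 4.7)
The plan is to mirror the proof of Lemma \ref{QEEpmlexplicitformlemma} almost verbatim, since the structural fact driving both proofs is that the low-order approximations $T_{m,1}$ are computed on independent rows (each starting from $\y_{m,0}=\un$) and are then combined linearly by the Aitken--Neville formula. The only thing that changes between the EE and EM cases is the explicit form of $T_{m,1}$ as a function of the residuals, and the Aitken--Neville weights, both of which have already been recorded above.

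First, I would use the final line of \textbf{Algorithm \ref{alg:extrapm}}, $\unone = T_{r,r}$, together with Corollary \ref{Tm1EMcorollary} (which gives the Aitken--Neville coefficients for the EM step-number sequence $n_j = j^2$), to write the perturbed update as
\[
\tunp \;=\; \sum_{m=1}^{r} \frac{2(-1)^{m+r} m^{2r}}{(r-m)!\,(r+m)!}\, T_{m,1}^{\mathrm{EM}}.
\]
Here I am tacitly using the fact that Aitken--Neville combination is a fixed linear operator on the $T_{m,1}$'s, so the perturbed $T_{r,r}$ is obtained by simply plugging the perturbed $T_{m,1}^{\mathrm{EM}}$'s into this combination.

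Second, I would substitute the expansion \eqref{Tm1EMextrapolation} for each $T_{m,1}^{\mathrm{EM}}$. That expansion cleanly splits into a part proportional to $\un$ and a linear combination $\sum_{\ell=1}^{2m} q_{m,2m-\ell+1}(z)\,\tr_{m,\ell}$ of residuals carrying only the index $m$ of their own row. Because different rows of the base-method phase do not share residuals, the resulting double sum has no cancellation across rows, and for each pair $(m,\ell)$ with $1\le m\le r$, $1\le \ell\le 2m$ the coefficient of $\tr_{m,\ell}$ in the expansion of $\tunp$ is exactly
\[
\frac{2(-1)^{m+r} m^{2r}}{(r-m)!\,(r+m)!}\, q_{m,2m-\ell+1}(z).
\]
By the definition of the internal stability polynomials (the coefficients of the residuals in the one-step error expansion on the linear test problem) this is $Q_{p,m,\ell}^{\mathrm{EM}}(z)$, which is precisely the claimed formula.

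The main obstacle is essentially absent, since both ingredients are already in place: the nontrivial content sits in \eqref{Tm1EMextrapolation} (which encodes the full three-term perturbed recursion through the auxiliary polynomials $q_{m,j}$) and in the closed-form Aitken--Neville weights of Corollary \ref{Tm1EMcorollary}. The only point deserving mention is the row-independence noted above, which is immediate from the inner loop of \textbf{Algorithm \ref{alg:extrapm}} resetting $\y_{m,0}=\un$ for each $m$; this ensures that the residual $\tr_{m,\ell}$ appears only via its own row's contribution and thus the linear combination produces no cross-row mixing.
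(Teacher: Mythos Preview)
Your proposal is correct and follows exactly the paper's approach: the paper states that Lemmas \ref{QEEpmlexplicitformlemma} and \ref{QEMpmlexplicitform} are proved by combining \eqref{Tm1EMextrapolation} with Corollary \ref{Tm1EMcorollary} and reading off the coefficients of the $\tr_{m,\ell}$, which is precisely what you do. Your explicit remark about row-independence (no cross-row mixing of residuals) is a helpful clarification that the paper leaves implicit.
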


Finally, we know that we obtain the stability polynomial $P(z)$ of the method, if---instead of the 
coefficients of the $\tr_{m,\ell}$ terms---we collect the 
coefficient of $\un$ in the relation $\unone=T_{p,p}$ (EE extrapolation) or
 $\unone=T_{r,r}$ (EM extrapolation). It is easily seen from the constructions in both
cases that the degree of $P$ is at most $p$, and $P$ approximates the exponential function to 
order $p$ near the origin. Therefore, the stability function of both methods is the degree $p$ 
Taylor polynomial of the exponential function centered at the origin.

As a by-product, (\ref{Tm1EEextrapolation}) with 
Corollary \ref{Tm1EEcorollary}, and (\ref{Tm1EMextrapolation}) with 
Corollary \ref{Tm1EMcorollary} also prove the following two identities:
\[
\sum_{m=1}^p \frac{(-1)^{m+p} m^{p-1}}{(p-m)!(m-1)!}\left(1+\frac{z}{m}\right)^m=
\sum_{m=0}^p \frac{z^m}{m!}\quad\quad (p\in\mathbb{N}^+),
\]
\[
\sum_{m=1}^{r} \frac{2 (-1)^{r} m^{2r}}{(r-m)!(r+m)!}\left({\Large{\texttt T}}_{2 m}\left(\frac{i z}{2 m}\right)+
i {\Large{\texttt U}}_{2 m-1}\left(\frac{i z}{2 m}\right)\right)=
\sum_{m=0}^{2r} \frac{z^m}{m!}\quad\quad (r\in\mathbb{N}^+).
\]

\subsection{Bounds on the absolute stability region $\displaystyle 
\Bigg|\sum_{m=0}^p \frac{z^m}{m!}\Bigg|\le 1$}\label{extrapolshapeofSp}

Let ${\mathcal{T}}_p(z):=\sum_{m=0}^p \frac{z^m}{m!}$ denote the 
degree $p$ Taylor polynomial of the exponential function around $0$. We have seen in 
the previous subsection that ${\mathcal{T}}_p$ is 
the stability polynomial of the $p$th order EE and EM extrapolation methods, hence the absolute 
stability region in both cases is the set
\[
\Ss_p=\{ z\in\mathbb{C} : |{\mathcal{T}}_p(z)|\le 1 \}.
\]
Figure \ref{firstfewTaylorStabRegions} shows the boundaries of the first few stability regions. 

\begin{figure}[h!]
  \centering
  \includegraphics[width=0.5\textwidth]{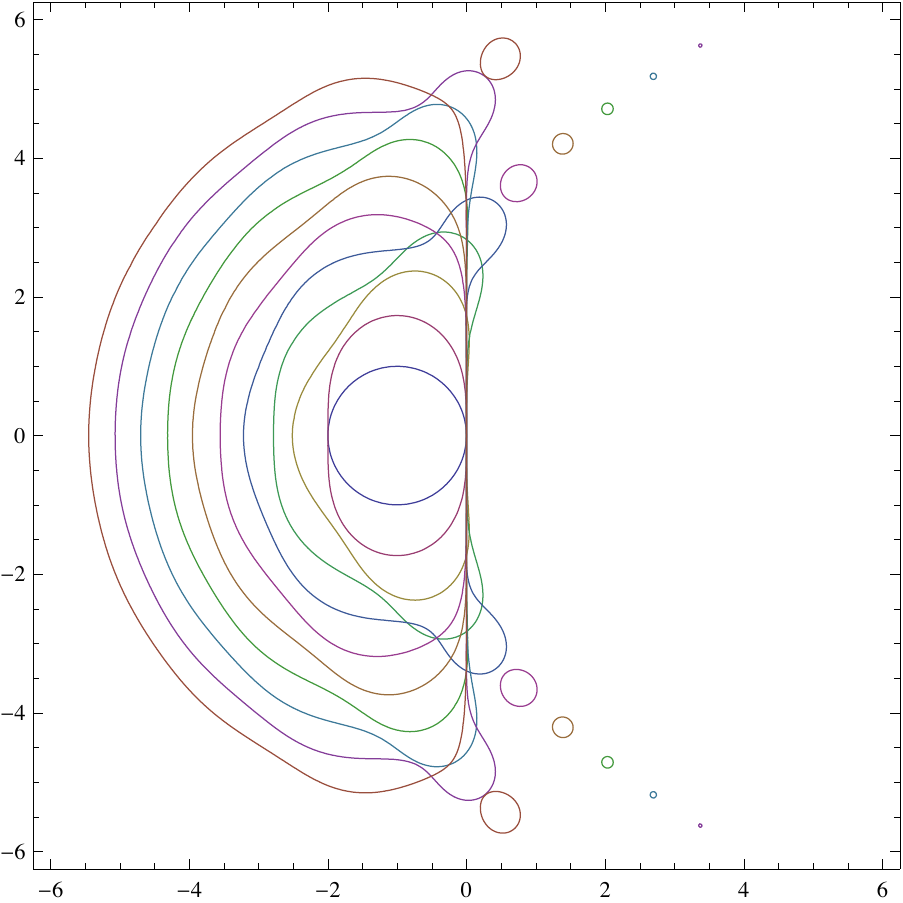}
  \caption{For fixed $1\le p\le 11$, the boundary curve(s) of the stability regions 
$\Ss_p\subset\Complex$ 
are depicted in the same color.\label{firstfewTaylorStabRegions}}
\end{figure}

Now we cite from \cite{exp_stability_regions} some bounds on the sets
$\Ss_p$ and  $\Ss_p\cap \Complex_{-,0}$ that will be used in the 
next subsections.

\begin{lem}\label{extrapollemma4.3shape}
For any $p\ge 2$ 
\[z\in \Ss_p \implies |z|\le 1.6\,p.\]
\end{lem}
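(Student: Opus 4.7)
The plan is to establish the contrapositive: for every $z\in\Complex$ with $|z|>1.6\,p$, we have $|{\mathcal T}_p(z)|>1$, so that $z\notin \Ss_p$. It is natural to factor out the leading term,
\begin{equation*}
{\mathcal T}_p(z) \;=\; \frac{z^p}{p!}\left(1+\sum_{k=1}^{p}\frac{p!}{(p-k)!\,z^{k}}\right),
\end{equation*}
and attempt to show that the parenthetical factor stays bounded away from $0$ in modulus while $|z^p/p!|$ grows rapidly. Stirling's formula $p!<\sqrt{2\pi p}\,(p/e)^p\,e^{1/(12p)}$ gives $|z^p|/p!\gtrsim (1.6 e)^p/\sqrt{2\pi p}$ on the circle $|z|=1.6\,p$, and $1.6e\approx 4.35$, so the leading term grows geometrically in $p$.

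The main obstacle, and the reason why a crude triangle inequality is too weak, is that the tail sum $\sum_{k=1}^{p} p!/((p-k)!\,|z|^k)$ is not a geometric series whose ratio is strictly less than $1$ when $|z|$ is only of order $p$: the individual ratios are roughly $p/|z|\approx 1/1.6$, but there are $p$ terms, so a blind bound gives something like $p/(|z|-p)=5/3$ which exceeds $1$. Bounding the lower Taylor terms by $e^{|z|}$ is equally fatal, since $e^{1.6}\approx 4.95>1.6e$, so $e^{|z|}$ swamps $|z|^p/p!$. One must therefore use cancellation coming from the phase of $z$.

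To exploit this cancellation, I would employ the integral remainder identity
\begin{equation*}
{\mathcal T}_p(z)\;=\;e^{z}-\frac{z^{p+1}}{p!}\int_0^1 (1-t)^p\,e^{zt}\,dt,
\end{equation*}
and split by the sign of $\mathrm{Re}(z)$. If $\mathrm{Re}(z)\ge 0$, a short direct argument (using $|e^z|\ge 1$ together with Laplace's method on the integral, whose integrand is concentrated near $t=0$) shows $|{\mathcal T}_p(z)|\ge 1$ whenever $|z|>p$ or so. If $\mathrm{Re}(z)<0$, then $|e^z|\le 1$ is harmless, and the problem reduces to showing that the second term has modulus exceeding $2$ for $|z|>1.6\,p$; here Laplace's method applied to $\int_0^1(1-t)^p e^{zt}\,dt$ (with the saddle controlled by the balance between $p\ln(1-t)$ and $\mathrm{Re}(z)\,t$) yields an asymptotic $\sim p!/z^{p+1}\cdot(\text{saddle factor})$, and a careful tracking of constants pins down the threshold at $1.6\,p$.

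Finally, since Stirling-based and Laplace-based estimates typically leave out small $p$, I would verify the bound directly for $p$ up to some explicit cutoff (say $p\le 10$) by using the exact polynomial $|{\mathcal T}_p(z)|^2-1$ and checking that it is positive on every circle $|z|=R$ with $R>1.6\,p$, e.g.\ by reducing the constrained extremization of $|z|^2$ subject to $|{\mathcal T}_p(z)|^2=1$ to a polynomial system and solving it. Combined with the asymptotic argument above, this settles all $p\ge 2$.
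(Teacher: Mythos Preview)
The paper does not actually prove this lemma: it is quoted, along with Lemmas~\ref{exp_stab_region1+eps}--\ref{exp_stab_region1/e}, from the companion reference \texttt{exp\_stability\_regions}. So there is no in-paper argument to compare your attempt against; the authors simply import the bound and use it downstream.

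As to the viability of your sketch itself, the overall plan (contrapositive, integral remainder, split on the sign of $\mathrm{Re}(z)$, handle small $p$ by direct computation) is standard and reasonable, but several of the steps you describe do not work as stated. In the $\mathrm{Re}(z)\ge 0$ branch you assert that the integrand $(1-t)^p e^{zt}$ is ``concentrated near $t=0$''; that is false once $\mathrm{Re}(z)>p$, since the maximum of $p\ln(1-t)+\mathrm{Re}(z)\,t$ on $[0,1]$ then sits at the interior point $t=1-p/\mathrm{Re}(z)$, and both $e^{z}$ and the remainder are exponentially large with potential cancellation between them. The inequality $|e^z|\ge 1$ alone gives nothing useful here, because $|{\mathcal T}_p(z)|\ge |e^z|-|R_p(z)|$ may well be negative. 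In the $\mathrm{Re}(z)<0$ branch you invoke Laplace's method and write ``a careful tracking of constants pins down the threshold at $1.6\,p$'', but the whole content of the lemma is precisely that constant, and you have not produced it; your own asymptotic $\int_0^1(1-t)^p e^{zt}\,dt\sim p!/z^{p+1}$ would make the second term $\approx 1$, not $>2$, so the unspecified ``saddle factor'' is carrying the entire argument. In short, the strategy could likely be pushed through, but the proposal as written is a plan rather than a proof, and at least one of its stated claims about where the integrand concentrates is incorrect.
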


\begin{lem}\label{exp_stab_region1+eps}
For any $\varepsilon\in (0,1)$ and $\mathbb{N}^+\ni p\ge \left(\frac{1.0085\, e}{\varepsilon}\right)^2$
\[
z\in \Ss_p \implies |z|\le (1+\varepsilon)p.
\]
\end{lem}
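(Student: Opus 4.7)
My plan is to prove the contrapositive: if $|z| > (1+\varepsilon)p$ then $|\mathcal{T}_p(z)| > 1$, so $z \notin \Ss_p$. By the already-established Lemma~\ref{extrapollemma4.3shape}, we know $\Ss_p \subset \{|z|\le 1.6p\}$, so the claim is trivial when $\varepsilon \ge 0.6$; the content is in the regime $0<\varepsilon<0.6$ with $(1+\varepsilon)p < |z| \le 1.6p$. The $\varepsilon^{-2}$ scaling of the threshold $p\ge(1.0085\,e/\varepsilon)^2$ is consistent with the $1/\sqrt{p}$ rate at which the outermost zero of $\mathcal{T}_p(pz)$ approaches the unit circle (Szeg\H{o}--Buckholtz theory), which is a reassuring sanity check.

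The natural first step is to factor out the leading monomial,
\[
\mathcal{T}_p(z) = \frac{z^p}{p!}\bigl(1+H(z)\bigr), \qquad H(z) := \sum_{k=1}^{p} \frac{(p)_k}{z^k},
\]
with $(p)_k = p(p-1)\cdots(p-k+1)$. By Stirling's inequality in Robbins' sharp form,
\[
\frac{|z|^p}{p!} \ge \frac{\bigl((1+\varepsilon)e\bigr)^p}{\sqrt{2\pi p}\,e^{1/(12p)}},
\]
which is super-exponentially large in $p$ since $(1+\varepsilon)e>1$. It therefore suffices to bound $|1+H(z)|$ from below by the super-exponentially small quantity $p!/|z|^p$.

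The main obstacle is exactly this lower bound on $|1+H(z)|$. A naive triangle inequality is useless: on the circle $|z|=(1+\varepsilon)p$ one only gets $|H(z)|\le \sum_{k=1}^{p}(1+\varepsilon)^{-k}\prod_{j=0}^{k-1}(1-j/p) < 1/\varepsilon$, a bound that exceeds $1$ whenever $\varepsilon<1$. To circumvent this I would instead use the integral remainder representation
\[
\mathcal{T}_p(z) = e^z - \frac{z^{p+1}}{p!}\int_0^1 e^{zs}(1-s)^p\,ds,
\]
and split the analysis by $\text{sign}(\text{Re}\,z)$. In the right half-plane, $|e^z|$ is bounded below by $1$ and the integral remainder is controlled by a Laplace-type estimate showing that $\left|\int_0^1 e^{zs}(1-s)^p ds\right|$ is much smaller than $p!/|z|^{p+1}$. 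In the left half-plane, $|e^z|\le 1$ and a lower bound $\left|\int_0^1 e^{zs}(1-s)^p ds\right|\gtrsim 1/|z|$ makes the remainder of order $|z|^p/p!$, which then dominates $|e^z|$. The hardest technical step will be making the Laplace asymptotics of the inner integral fully uniform in the phase of $z$ on the annulus $(1+\varepsilon)p<|z|\le 1.6p$ and then optimizing against Stirling's remainder tightly enough to produce the explicit constant $1.0085\,e$; this optimization is the essence of the analysis in \cite{exp_stability_regions}, to which I would ultimately defer for the sharp numerical value.
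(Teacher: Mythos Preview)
The paper does not prove this lemma at all: it is one of the four bounds explicitly \emph{cited} from \cite{exp_stability_regions} (see the sentence ``Now we cite from \cite{exp_stability_regions} some bounds\ldots'' introducing Lemmas~\ref{extrapollemma4.3shape}--\ref{exp_stab_region1/e} in Section~\ref{extrapolshapeofSp}). So there is no in-paper argument to compare against, and your proposal in fact lands in the same place---after sketching a strategy you write that you ``would ultimately defer'' to \cite{exp_stability_regions} for the constant $1.0085\,e$. In that sense your proof and the paper's coincide.

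As for the sketch itself: the overall plan (reduce to $(1+\varepsilon)p<|z|\le 1.6p$ via Lemma~\ref{extrapollemma4.3shape}, use the integral remainder for $\mathcal{T}_p$, and split on the sign of $\mathrm{Re}\,z$) is sound, but the right-half-plane branch as written does not close near the imaginary axis. If the remainder is shown to be $<1$ there, the conclusion is only $|\mathcal{T}_p(z)|\ge |e^z|-1$, which vanishes when $\mathrm{Re}\,z=0$ rather than exceeding $1$. The real mechanism throughout the annulus is that the leading monomial $z^p/p!$ already dominates, so one has to bound $|1+H(z)|$ away from zero \emph{uniformly in $\arg z$}; as you correctly observe, the naive triangle inequality fails for this, and the uniform Laplace/saddle-point control (equivalently, locating the zeros of $\mathcal{T}_p$ relative to the Szeg\H{o} curve) is precisely the content outsourced to \cite{exp_stability_regions}. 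A minor wording slip: $((1+\varepsilon)e)^p/\sqrt{2\pi p}$ is exponentially, not super-exponentially, large.
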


\begin{lem}\label{exp_stab_region95/100}
For any $p\ge 3$ 
\[z\in \Ss_p \cap \Complex_{-,0}\implies |z|\le 0.95 \, p.\]
\end{lem}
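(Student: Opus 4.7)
The plan is to prove the contrapositive: for $p \ge 3$, if $z \in \Complex$ satisfies $\mathrm{Re}(z) \le 0$ and $|z| > 0.95\,p$, then $|{\mathcal{T}}_p(z)| > 1$. The main tool is the decomposition ${\mathcal{T}}_p(z) = e^z - R_p(z)$, where $R_p(z) := \sum_{m \ge p+1} z^m/m!$ is the Taylor remainder. Since $|e^z| = e^{\mathrm{Re}(z)} \le 1$ in the closed left half-plane, the reverse triangle inequality yields
\[
|{\mathcal{T}}_p(z)| \ge \bigl||e^z| - |R_p(z)|\bigr| \ge |R_p(z)| - 1,
\]
so it suffices to establish $|R_p(z)| > 2$ under the hypotheses.

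For this lower bound, I would use the integral representation
\[
R_p(z) = \frac{z^{p+1}}{p!}\int_0^1 (1-t)^p e^{tz}\,dt.
\]
The prefactor satisfies $|z|^{p+1}/p! \gtrsim (0.95\,e)^{p+1}/\sqrt{p}$ by Stirling's formula, and since $0.95\,e > 2.5$ this is exponentially large in $p$. Writing $z = re^{i\theta}$ with $r \ge 0.95\,p$ and $\theta \in [\pi/2, \pi]$ (by conjugation symmetry of ${\mathcal{T}}_p$), I would then lower-bound the integral $I(z) := \int_0^1 (1-t)^p e^{tz}\,dt$ by splitting $[0,1] = [0,\delta_p] \cup [\delta_p,1]$ with $\delta_p$ proportional to $1/p$. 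On the initial segment the weight $(1-t)^p$ stays close to $1$ while $|tz| = O(1)$, so the phase of $e^{tz}$ varies only by a bounded amount, giving $|I_1(z)| \gtrsim 1/p$. On the tail the weight satisfies $(1-t)^p \le e^{-pt}$, which produces an exponentially small contribution that cannot cancel $I_1(z)$. Combining yields $|R_p(z)| \gtrsim (0.95\,e)^{p+1}/p^{3/2}$, which exceeds $2$ for all $p \ge p_0$ with an explicit computable threshold $p_0$.

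The finitely many small cases $3 \le p < p_0$ would be disposed of by direct computation: for each such $p$, verify that $\sup\{|z| : z \in \partial{\Ss}_p \cap \Complex_{-,0}\}$ is strictly smaller than $0.95\,p$ by solving $|{\mathcal{T}}_p(re^{i\theta})|^2 = 1$ (a polynomial equation in $r$ for each fixed $\theta$) on $\theta \in [\pi/2,\pi]$ and maximizing. The main obstacle I anticipate is the integral estimate in the regime $\theta \approx \pi/2$, where $e^{tz}$ is oscillatory along $[0,1]$ and the naive bound $|I(z)| \le 1/(p+1)$ goes the wrong way. To handle this regime one may need an integration by parts, or a contour deformation to a path on which $\mathrm{Re}(tz)$ is monotone, so that $|e^{tz}|$ can be controlled without relying on cancellation. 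The generous slack between $0.95$ and the Szegő-type asymptotic constant $1/e \approx 0.37$ (from Szegő's theorem on the scaled zeros of ${\mathcal{T}}_p(pw)$) provides room to absorb the lower-order error terms arising from Stirling's formula and from the tail of the integral.
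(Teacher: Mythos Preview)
The paper does not prove this lemma; it is quoted from the companion paper \cite{exp_stability_regions} (see the opening sentence of Section~\ref{extrapolshapeofSp}). So there is no in-paper proof to compare against, and your proposal must stand on its own.

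Your outline is sound: the reverse triangle inequality reduces the claim to $|R_p(z)|>2$, and the prefactor $|z|^{p+1}/p!$ grows like $(0.95e)^p/\sqrt{p}$ with $0.95e\approx 2.58$, so there is exponential slack. One small slip: when you split at $\delta_p\propto 1/p$ and claim $|tz|=O(1)$ on $[0,\delta_p]$, you implicitly assume $|z|=O(p)$, whereas the hypothesis $|z|>0.95p$ gives no upper bound on $|z|$. Take $\delta\propto 1/|z|$ instead; then $|tz|=O(1)$ holds automatically and $(1-t)^p\ge(1-c/|z|)^p\ge(1-c/(0.95p))^p$ is still bounded below uniformly in $p$.

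The oscillatory case $\theta\approx\pi/2$ that you flag is indeed the whole difficulty, and your soft splitting argument does not yet close it. A single integration by parts gives $I(z)=-1/z+(p/z)\int_0^1(1-t)^{p-1}e^{tz}\,dt$, and the trivial bound on the second integral yields only $|I(z)|\ge 1/|z|-1/|z|=0$, which is useless. To obtain the needed $|I(z)|\gtrsim 1/|z|$ you must exploit that the second integral is close to $1/(p-z)$ rather than to $1/p$---for instance via the substitution $u=pt$, the approximation $(1-u/p)^{p-1}\approx e^{-u}$, and an explicit evaluation with uniform error control. This is workable, but it is a genuine quantitative estimate rather than a soft argument, and for the small-$p$ cases (where both Stirling and the $e^{-u}$ approximation are crude) you will certainly need the direct computations you mention.
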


\begin{lem}\label{exp_stab_region1/e}
For any $\varepsilon>0$ there exists $p(\varepsilon)\in\mathbb{N}^+$ such that
$p\ge p(\varepsilon)$ and $z\in\Ss_p\cap \Complex_{-,0}$ imply 
$|z|\le \left(\frac{1}{e}+\varepsilon\right) p$.
\end{lem}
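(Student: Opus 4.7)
The plan is to establish a lower bound on the remainder $R_p(z):=e^z-\mathcal{T}_p(z)$ that blows up exponentially in $p$ on the region $\{z\in\Complex_{-,0}:|z|\ge(1/e+\varepsilon)p\}$. Since $|e^z|\le1$ on $\Complex_{-,0}$, the reverse triangle inequality $|\mathcal{T}_p(z)|\ge|R_p(z)|-|e^z|\ge|R_p(z)|-1$ will then force $|\mathcal{T}_p(z)|>1$ for large $p$, excluding such $z$ from $\Ss_p$.

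First I would use the classical integral representation
\[
R_p(z) \;=\; \frac{z^{p+1}}{p!}\,I(z),\qquad I(z):=\int_0^1 (1-t)^p e^{tz}\,dt,
\]
and rewrite $(1-t)^p e^{tz}=e^{-(1-z/p)\,pt}\,e^{-p g(t)}$ with $g(t):=-t-\log(1-t)\ge0$ on $[0,1)$. After substituting $u=pt$, one is led to expect $pI(z)\to 1/(1-z/p)=p/(p-z)$ as $p\to\infty$, the usual Watson/Laplace regime.

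Second, I would make the asymptotic $I(z)=\frac{1}{p-z}\bigl(1+O(p^{-1})\bigr)$ rigorous, uniformly for $z\in\Complex_{-,0}$ with $|z|\le 1.6p$ (so that Lemma~\ref{extrapollemma4.3shape} disposes of larger $z$). The crucial input is $\Re(1-z/p)\ge1$ on the closed left half-plane, together with the nonnegativity and quadratic behavior $g(t)\approx t^2/2$ near $t=0$: the factor $e^{-pg(u/p)}$ is $1-O(u^2/p)$ near the origin and $o(1)$ elsewhere. Splitting the $u$-integral at $u=p^{1/2}$ (inner region: direct Taylor expansion of $e^{-pg(u/p)}$; outer region: bounded in modulus by $e^{-\sqrt{p}}$) gives a uniform error estimate. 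In particular, one obtains
\[
|I(z)|\;\ge\;\frac{1}{|p-z|}\bigl(1-o(1)\bigr)\;\ge\;\frac{1-o(1)}{(1+c)p}\qquad\text{when }|z|=cp,\ \Re z\le0,
\]
because $|p-z|\le p+|z|$.

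Third, Stirling's formula yields
\[
|R_p(z)|\;\gtrsim\;\frac{(cp)^{p+1}}{(1+c)\,p\cdot p!}\;\sim\;\frac{c\,(ce)^p}{(1+c)\sqrt{2\pi p}},
\]
and if $c>1/e$ then $ce>1$, so the right-hand side tends to infinity uniformly for $c$ in any compact subinterval of $(1/e,1)$. Given $\varepsilon>0$, if $1/e+\varepsilon\ge0.95$ the claim follows from Lemma~\ref{exp_stab_region95/100} with $p(\varepsilon)=3$; otherwise the preceding estimate, applied uniformly for $c\in[1/e+\varepsilon,0.95]$, produces a threshold $p(\varepsilon)$ beyond which $|R_p(z)|>2$, hence $|\mathcal{T}_p(z)|>1$ and $z\notin\Ss_p$.

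The main obstacle is the second step: producing a uniform lower bound on $|I(z)|$ of order $1/|p-z|$. One cannot pull moduli inside the integral, since the oscillatory factor $e^{itz\sin\arg z}$ contains exactly the phase information that survives in $1/(p-z)$. The split-range Laplace analysis above is essentially the only way to retain both the phase structure and control of the correction $e^{-pg(u/p)}$, and it has to be carried out uniformly in $z$ over a complex region, not merely along the negative real axis.
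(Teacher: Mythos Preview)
The paper does not prove this lemma; it is quoted verbatim from the companion reference \cite{exp_stability_regions} (see the sentence introducing Lemmas~\ref{extrapollemma4.3shape}--\ref{exp_stab_region1/e}).  So there is no ``paper's own proof'' to compare against, and your proposal is an independent argument.

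Your approach is correct.  With $w:=1-z/p$ one has $\Re w\ge 1$ throughout $\Complex_{-,0}$, and the substitution $u=pt$ gives
\[
pI(z)=\int_0^p e^{-wu}\,e^{-p g(u/p)}\,du,\qquad g(t)=-t-\log(1-t)\ge 0.
\]
Splitting at $u=\sqrt{p}$: on the inner range $|e^{-pg(u/p)}-1|\le pg(u/p)\le u^2/p$ (using $g(t)\le t^2$ for $t\le 1/2$), so the correction integral is $\le \frac{1}{p}\int_0^\infty e^{-u}u^2\,du=2/p$; on the outer range the integrand is $\le e^{-u}$ in modulus, contributing at most $e^{-\sqrt{p}}$.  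Hence $pI(z)=1/w+O(1/p)$ uniformly for $z\in\Complex_{-,0}$ with $|z|\le 0.95\,p$, and since $|1/w|\ge 1/(1+c)$ when $|z|=cp$, your lower bound on $|I(z)|$ follows.  Stirling then gives $|R_p(z)|\gtrsim c(ce)^p/((1+c)\sqrt{2\pi p})$, which diverges uniformly for $c\in[1/e+\varepsilon,0.95]$; combined with $|e^z|\le 1$ and Lemma~\ref{exp_stab_region95/100} this finishes the proof.

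Two minor points worth tightening in a write-up: the phrase ``direct Taylor expansion of $e^{-pg(u/p)}$'' is slightly misleading since $pg(u/p)$ can be as large as $1$ near $u=\sqrt{p}$; the inequality $|e^{-x}-1|\le x$ is what you actually need.  Also, your displayed asymptotic for $|R_p(z)|$ is right (I checked the powers of $p$), but you should state explicitly that the $O(1/p)$ error in $pI(z)$ is uniform in $w$ with $\Re w\ge 1$, since that uniformity is exactly what delivers the compact-in-$c$ conclusion.
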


\begin{rem}\label{asymptoptimalityremark}
We remark that \cite[Theorem 5.4]{jeltschnevanlinna} shows that 
Lemmas \ref{exp_stab_region1+eps} and \ref{exp_stab_region1/e} above are asymptotically
optimal in the following sense. 
On one hand, for each $\varepsilon\in (0,1)$ and $p_0\in\mathbb{N}^+$ there is a $p\ge p_0$
and $z\in \Ss_p$  such that $\varepsilon p< |z|$; on the other hand,  
for each $\varepsilon\in (0,1)$ and $p_0\in\mathbb{N}^+$ there is a $p\ge p_0$
and $z\in \Ss_p\cap \Complex_{-,0}$ such that $\varepsilon \frac{p}{e}< |z|$.
\end{rem}

In addition, \cite{exp_stability_regions} also determines the quantities $\max_{z\in\Ss_p} |z|$  
and $\max_{z\in\Ss_p\cap \Complex_{-,0}} |z|$ as exact algebraic numbers for $1\le p\le 20$.
In Tables \ref{farthestpointTaylorpoly} and \ref{farthestpointTaylorpolylefthalfplane} 
we reproduce those figures. For the sake of brevity, instead of listing any exact (and complicated) 
algebraic numbers 
(with degrees up to 760), their values are rounded up, 
so Tables \ref{farthestpointTaylorpoly} and \ref{farthestpointTaylorpolylefthalfplane} 
provide strict upper bounds. Notice that the corresponding tables in \cite{exp_stability_regions}
refer to the scaled stability regions $\left\{z\in \Complex : 
\displaystyle \left|\sum_{m=0}^p \frac{(p z)^m}{m!}\right|\le 1\right\}$, 
but no scaling is applied in Tables \ref{farthestpointTaylorpoly} and \ref{farthestpointTaylorpolylefthalfplane}
in the present work. 

We add finally that some interesting patterns are reported in \cite{exp_stability_regions}
concerning the sets $\Ss_p\cap \Complex_0$, where $\Complex_0$ denotes
the set of complex numbers with zero real part. One result says, for example, 
that for $1\le p\le 100$, $\Ss_p\cap \Complex_0=\{0\}$ if and only if $p\in\{1,2,6\}$.

\begin{table}[h]
\begin{center}
    \begin{tabular}{c|c||c|c}
    $p$ & $\max_{z\in\Ss_p} |z|$ & $p$ & $\max_{z\in\Ss_p} |z|$ \\ \hline
    1 & 2 & 11 & 7.302 \\
    2 & $\sqrt{2 \left(1+\sqrt{2}\right)}\approx 2.198$ & 12 & 8.035 \\
    3 & 2.539 & 13 & 8.780 \\
    4 & 2.961 & 14 & 9.535 \\
    5 & 3.447 & 15 & 10.298 \\
    6 & 3.990 & 16 & 11.069 \\
    7 & 4.582 & 17 & 11.846 \\
    8 & 5.218 & 18 & 12.628 \\
    9 & 5.888 & 19 & 13.417 \\
    10 & 6.585 & 20 & 14.210 \\ \hline
    \end{tabular}
\caption{For $p\ge 3$, the exact maximum values are rounded up.\label{farthestpointTaylorpoly}}
\end{center}
\end{table}

\begin{table}[h]
\begin{center}
    \begin{tabular}{c|c||c|c}
    $p$ & $\max_{z\in\Ss_p\cap \Complex_{-,0}} |z|$ & $p$ & $\max_{z\in\Ss_p\cap \Complex_{-,0}} |z|$ \\ \hline
    1 & 2 & 11 & 5.451 \\
    2 & $\sqrt{2 \left(1+\sqrt{2}\right)}\approx 2.198$ & 12 & 5.825 \\
    3 & 2.539 & 13 & 6.231 \\
    4 & 2.961 & 14 & 6.657 \\
    5 & 3.396 & 15 & 7.108 \\
    6 & 3.581 & 16 & 7.325 \\
    7 & 3.961 & 17 & 7.700 \\
    8 & 4.367 & 18 &  8.092\\
    9 & 4.800 & 19 &  8.513\\
    10 & 5.262 & 20 &  8.955\\ \hline
    \end{tabular}
\caption{For $p\ge 3$, the exact maximum values are rounded up.\label{farthestpointTaylorpolylefthalfplane}}
\end{center}
\end{table}

\subsection{Bounds on $\M^{\mathrm{EE}}_p({\mathcal{S}})$}\label{subsection43boundsonMpEE}

\subsubsection{The case ${\mathcal{S}}=\Ss_p$}
\label{subsection421}

First we estimate
\[
\M^{\mathrm{EE}}_p(\Ss_p)\equiv \max_{1\le m \le p }\max_{1\le \ell \le m}\max_{z\in \Ss_p}\left\vert
Q_{p,m,\ell}^{\mathrm{EE}}(z)\right\vert
\]
from above for any $p\ge 2$. We are going to use the  
Lambert $W$-function (a.k.a. {\texttt{ProductLog}} 
in \textit{Mathematica}): for $x\ge-\frac{1}{e}$, there is a unique $W(x)\ge -1$ such that 
\begin{equation}\label{Wdefinition}
x=W(x) e^{W(x)}.
\end{equation}

By taking into account the fact that
for any fixed $c>0$ the function 
$[1,+\infty)\ni m\mapsto \left( 1+\frac{c}{m}\right)^{m-1}$ is monotone
increasing, Lemma \ref{extrapollemma4.3shape} yields that
\begin{equation}\label{upperboundsonMpEESpinitialestimate}
\left\vert 1+\frac{z}{m}\right\vert^{\ell-1}\le 
\left( 1+\frac{16p}{10m}\right)^{\ell-1}\le \left( 1+\frac{8p}{5m}\right)^{m-1}\le 
\left( 1+\frac{8p}{5p}\right)^{p-1}=2.6^{p-1}
\end{equation}
(notice that the above bound also holds in the
exceptional $z=-m$, $\ell=1$ case defined via a limit), so by Lemma \ref{QEEpmlexplicitformlemma} 
\begin{equation}\label{QEEpmlexplicitform}
\left\vert Q_{p,m,\ell}^{\mathrm{EE}}(z)\right\vert = \frac{m^{p-1} }{(p-m)! (m-1)!}
\left\vert 1+\frac{z}{m}\right\vert^{\ell-1}\le \frac{m^{p} }{(p-m)! m!}\cdot
\frac{2.6^{p}}{2.6}. 
\end{equation}
Now we eliminate both factorials by means of the following estimate (whose proof is again a 
standard monotonicity argument, hence omitted here).

\begin{lem}\label{factorialestimatelemma}
For any $n\in\mathbb{N}^+$ we have
$
\left(\frac{n}{e}\right)^n \sqrt{2\pi n} < n! \le e \left(\frac{n}{e}\right)^n \sqrt{n}.
$\\
\end{lem}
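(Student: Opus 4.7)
The plan is to introduce the ratio
\[
a_n := \frac{n!}{(n/e)^n \sqrt{n}}
\]
and establish three facts: (i) $a_n$ is strictly decreasing in $n$; (ii) $a_1 = e$; (iii) $a_n \to \sqrt{2\pi}$ as $n \to \infty$. Together these immediately give both inequalities in the lemma: the upper bound follows from (i)+(ii), and the strict lower bound from (i)+(iii).

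The computation (ii) is trivial: $a_1 = 1/((1/e)\sqrt{1}) = e$. Fact (iii) is just the usual Stirling formula, which I would quote. The real content is (i). Direct simplification of $a_{n+1}/a_n$ gives
\[
\frac{a_{n+1}}{a_n} \;=\; \frac{e\sqrt{n}}{\sqrt{n+1}}\left(\frac{n}{n+1}\right)^n \;=\; \frac{e}{\left(1+\frac{1}{n}\right)^{n+1/2}},
\]
so strict monotonicity reduces to the classical inequality
\[
\left(n+\tfrac{1}{2}\right)\ln\!\left(1+\tfrac{1}{n}\right) > 1 \qquad (n\ge 1).
\]

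To prove this I would use the substitution $t := 1/(2n+1) \in (0,1]$. Since $(1+t)/(1-t) = (n+1)/n$, the standard power series gives
\[
\ln\!\left(1+\tfrac{1}{n}\right) \;=\; \ln\frac{1+t}{1-t} \;=\; 2\sum_{k=0}^\infty \frac{t^{2k+1}}{2k+1},
\]
while $n + 1/2 = 1/(2t)$. Multiplying these,
\[
\left(n+\tfrac{1}{2}\right)\ln\!\left(1+\tfrac{1}{n}\right) \;=\; \sum_{k=0}^\infty \frac{t^{2k}}{2k+1} \;=\; 1 + \sum_{k=1}^\infty \frac{t^{2k}}{2k+1} \;>\; 1,
\]
which is the required inequality. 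Consequently $a_{n+1} < a_n$ for every $n\ge 1$, so $a_n \le a_1 = e$ (giving $n! \le e(n/e)^n\sqrt{n}$, with equality at $n=1$), and $a_n > \lim_{k\to\infty} a_k = \sqrt{2\pi}$ (giving the strict bound $n! > (n/e)^n\sqrt{2\pi n}$).

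The only nontrivial step is the inequality $(n+1/2)\ln(1+1/n) > 1$; the $t$-substitution handles it with no estimation, so I do not anticipate a genuine obstacle. One could alternatively cite this as a well-known Hermite-type sharpening of $\ln(1+x) < x$, but the half-page calculation above keeps the argument self-contained.
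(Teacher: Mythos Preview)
Your argument is correct and is precisely the ``standard monotonicity argument'' the paper alludes to (the paper omits the proof entirely, saying only that it is a monotonicity argument). Your computation of $a_{n+1}/a_n$, the reduction to $(n+\tfrac12)\ln(1+\tfrac1n)>1$, and the $t=1/(2n+1)$ series trick are all fine; nothing needs to change.
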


\noindent The case $m=p$ will be dealt with later. Otherwise, if $1\le m\le p-1$, the lemma says that 
\[
\frac{m^{p} }{(p-m)! m!}\cdot
\frac{2.6^{p}}{2.6} \le \frac{e^{p}}{2\pi }\cdot\frac{1}{\sqrt{m(p-m)}}
\left( \frac{p}{m}-1\right)^{m-p}\cdot \frac{2.6^{p}}{2.6}=\ldots
\]
Now by introducing a new variable $x:=\frac{m}{p}\in \left[ \frac{1}{p},1-\frac{1}{p}\right]$ we obtain 
\begin{equation}\label{beforeW}
\ldots=\frac{(2.6e)^{p}}{5.2\pi }\cdot\frac{1}{p \sqrt{(1-x) x}} \left(\left(\frac{1}{x}-1\right)^{x-1}\right)^{p}
\le \frac{(2.6e)^{p}}{5.2\pi }\cdot \frac{1}{\sqrt{p-1}} \left(\left(\frac{1}{x}-1\right)^{x-1}\right)^{p}.
\end{equation}
Elementary computation shows that the function $(0,1)\ni x \mapsto \left(\frac{1}{x}-1\right)^{x-1}$
is unimodal with a strict maximum at the root of the transcendental equation
\[
\frac{1}{x}+\ln \left(\frac{1}{x}-1\right)=0, 
\]
which is at $x=\frac{1}{1+W\left(\frac{1}{e}\right)}\approx 0.782188$. By using this fact and 
examining the boundary behavior 
as well, we get
that the range of 
$(0,1)\ni x \mapsto \left(\frac{1}{x}-1\right)^{x-1}$ is the interval 
$
\left(0,e^{W\left(\frac{1}{e}\right)}\right)$. This implies that the right-hand side of (\ref{beforeW})
is estimated from above further by
\[
\frac{(2.6e)^{p}}{5.2\pi }\cdot \frac{1}{\sqrt{p-1}} \left(e^{W\left(\frac{1}{e}\right)}\right)^{p}=
\frac{\left(\frac{2.6}{W\left(\frac{1}{e}\right)}\right)^p}{5.2 \pi  \sqrt{p-1}}.
\]

Finally,  again by Lemma 
\ref{factorialestimatelemma}, we give an upper estimate in the $m=p$ case, separated earlier, as
\[
\frac{m^{p} }{(p-m)! m!}\cdot
\frac{2.6^{p}}{2.6} \Bigg|_{m=p}\le \frac{2.6^{p-1} e^p}{\sqrt{2 \pi } \sqrt{p}}.
\]

By comparing the two upper estimates (for $1\le m\le p-1$ \textit{versus} the one for $m=p$) and
selecting the larger, we establish the following.

\begin{thm} \label{MEEupperestimate} We have
\[
\M^{\mathrm{EE}}_2(\Ss_2) \le \frac{13 e^2}{10 \sqrt{\pi }}<5.42,
\] 
while for any $p\ge 3$ 
\[
\M^{\mathrm{EE}}_p(\Ss_p)\le \frac{\left(\frac{2.6}{W\left(\frac{1}{e}\right)}\right)^p}{5.2 \pi  \sqrt{p-1}}<
\frac{9.34^p}{5.2 \pi  \sqrt{p-1}}.
\]
\end{thm}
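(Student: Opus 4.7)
The plan is to follow the reduction that is already sketched in the paragraphs immediately preceding the statement, and to fill in the two pieces that really need argument: the Lambert-$W$ optimization and the final case comparison. Starting from the identity $|Q^{\mathrm{EE}}_{p,m,\ell}(z)| = m^{p-1}/\bigl((p-m)!\,(m-1)!\bigr)\,|1+z/m|^{\ell-1}$ of Lemma \ref{QEEpmlexplicitformlemma}, I would first kill the $z$-dependence. Since $\ell-1 \le m-1$ and the map $m \mapsto (1+c/m)^{m-1}$ is nondecreasing on $[1,\infty)$ for any $c>0$, combining with $|z|\le 1.6\,p$ from Lemma \ref{extrapollemma4.3shape} yields $|1+z/m|^{\ell-1}\le 2.6^{p-1}$ uniformly in all admissible $m,\ell,z$. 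This reduces the problem to estimating
\[
B(m,p) := \frac{m^{p}}{(p-m)!\,m!}\cdot 2.6^{p-1}
\]
for $1\le m \le p$ (where I have rewritten $m^{p-1}/(m-1)! = m^{p}/m!$).

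Next I would split into two cases. For $m=p$ the denominator carries a single factorial, and the Stirling lower bound $p!\ge \sqrt{2\pi p}\,(p/e)^{p}$ from Lemma \ref{factorialestimatelemma} gives $B(p,p) \le 2.6^{p-1}\,e^{p}/\sqrt{2\pi p}$; specialized to $p=2$ this is exactly $13\,e^{2}/(10\sqrt{\pi})$, which handles the first claim of the theorem. For $1\le m\le p-1$, I would apply Lemma \ref{factorialestimatelemma} to \emph{both} factorials, change variables to $x := m/p \in [1/p,\,1-1/p]$, and bound $1/(p\sqrt{x(1-x)})$ from above by $1/\sqrt{p-1}$ using $x(1-x)\ge (p-1)/p^{2}$. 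The result is the clean inequality
\[
B(m,p) \le \frac{(2.6\,e)^{p}}{5.2\pi\sqrt{p-1}}\cdot \Bigl((1/x-1)^{x-1}\Bigr)^{p}.
\]

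The main obstacle, and the one genuinely interesting step, is the Lambert-$W$ extremum $\sup_{x\in(0,1)}(1/x-1)^{x-1}$. Differentiating the logarithm of this function gives the critical equation $\ln(1/x-1)+1/x=0$. The substitution $y = 1/x-1$ converts this into $y\,e^{y} = 1/e$, whose unique solution with $y\ge -1$ is $y = W(1/e)$ by the definition (\ref{Wdefinition}) of the Lambert $W$. Thus the unique maximizer is $x^{*} = 1/(1+W(1/e))$, the maximum value is $e^{W(1/e)}$, and the boundary behaviour ($\to 1$ as $x\to 0^{+}$, $\to 0$ as $x\to 1^{-}$) confirms that the supremum is attained at $x^{*}$. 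Using the identity $e\cdot e^{W(1/e)} = 1/W(1/e)$ (which is just a rearrangement of $W(1/e)\,e^{W(1/e)} = 1/e$) collapses the bound to $(2.6/W(1/e))^{p}/(5.2\pi\sqrt{p-1})$, and the numerical check $W(1/e)\approx 0.278$ gives $2.6/W(1/e)<9.34$.

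To finish, I would compare the two case bounds for $p\ge 3$: the $m=p$ estimate grows like $(2.6\,e)^{p}\approx 7.07^{p}$, while the $1\le m\le p-1$ estimate grows like $9.34^{p}$, so the latter dominates and produces the displayed bound of the theorem. The whole proof is thus monotonicity plus Stirling bookkeeping, with the Lambert-$W$ optimization as the only nonroutine ingredient.
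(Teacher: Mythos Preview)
Your proposal is correct and follows essentially the same argument as the paper: bound $|1+z/m|^{\ell-1}\le 2.6^{p-1}$ via Lemma~\ref{extrapollemma4.3shape} and monotonicity, split off $m=p$, apply the Stirling bounds of Lemma~\ref{factorialestimatelemma}, change variables to $x=m/p$, and optimize $(1/x-1)^{x-1}$ over $(0,1)$ via the Lambert-$W$ identity. One harmless slip: you have the boundary limits swapped---in fact $(1/x-1)^{x-1}\to 0$ as $x\to 0^{+}$ and $\to 1$ as $x\to 1^{-}$---but since both limits are below $e^{W(1/e)}\approx 1.321$, your conclusion that the supremum is attained at the interior critical point is unaffected.
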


In (\ref{upperboundsonMpEESpinitialestimate}), we have used Lemma \ref{extrapollemma4.3shape}
(valid for all $p\ge 2$) to estimate $|z|$ from above. By using (the asymptotically optimal) 
Lemma \ref{exp_stab_region1+eps} instead, but otherwise following the same steps as in 
the proof of Theorem \ref{MEEupperestimate}, 
one can reduce the constant $9.34$ to get an asymptotically better bound, described by the next theorem.

\begin{thm}\label{7.19theorem}
For any $\varepsilon>0$
there is a $p(\varepsilon)\in\mathbb{N}^+$ such that for all $p\ge p(\varepsilon)$
\[
\M^{\mathrm{EE}}_p(\Ss_p)\le 
\frac{\left(\frac{2+\varepsilon}{W\left(\frac{1}{e}\right)}\right)^p}{(4+2\varepsilon)\pi  \sqrt{p-1}}
< \frac{\left(7.19+3.6\varepsilon\right)^p}{(4+2\varepsilon)\pi \sqrt{p-1}}.
\]
\end{thm}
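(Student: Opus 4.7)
The plan is to repeat verbatim the argument that established the preceding theorem, but swap the single coarse input $|z|\le 1.6\,p$ (from Lemma \ref{extrapollemma4.3shape}) for the asymptotically sharp bound $|z|\le (1+\varepsilon)p$ from Lemma \ref{exp_stab_region1+eps}. This immediately commits us to requiring $p \ge \lceil (1.0085\,e/\varepsilon)^2 \rceil$ as one component of $p(\varepsilon)$; a second, larger, component will fall out of the end of the argument.

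Starting from the explicit form in Lemma \ref{QEEpmlexplicitformlemma}, the same monotonicity of $m\mapsto (1+c/m)^{m-1}$ used in (\ref{upperboundsonMpEESpinitialestimate}), applied now with $c=(1+\varepsilon)p$ and $\ell-1\le m-1\le p-1$, yields
\[
\left|1+\frac{z}{m}\right|^{\ell-1}\le \left(1+\frac{(1+\varepsilon)p}{m}\right)^{m-1}\le (2+\varepsilon)^{p-1},
\]
so that $|Q^{\mathrm{EE}}_{p,m,\ell}(z)|\le \frac{m^{p}}{(p-m)!\,m!}\cdot\frac{(2+\varepsilon)^{p}}{2+\varepsilon}$. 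For $1\le m\le p-1$, I would then apply Lemma \ref{factorialestimatelemma} exactly as in the previous proof, pass to the variable $x=m/p$, use $\sqrt{m(p-m)}\ge\sqrt{p-1}$, and recycle the fact that $\max_{x\in(0,1)}(1/x-1)^{x-1}=e^{W(1/e)}$ together with the Lambert identity $e^{1+W(1/e)}=1/W(1/e)$ (which follows from (\ref{Wdefinition})). The outcome is
\[
\frac{m^{p}}{(p-m)!\,m!}\le \frac{(1/W(1/e))^{p}}{2\pi\sqrt{p-1}},
\]
and multiplying by $(2+\varepsilon)^{p-1}$ gives exactly the claimed bound.

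The case $m=p$ needs to be handled separately, just as before: Lemma \ref{factorialestimatelemma} yields
\[
|Q^{\mathrm{EE}}_{p,p,\ell}(z)|\le \frac{(2+\varepsilon)^{p-1}\,e^{p}}{\sqrt{2\pi p}},
\]
and the ratio of this to the main bound is controlled by $(e\,W(1/e))^{p}$. Since $e\,W(1/e)\approx 0.757<1$, this geometric factor beats the $\sqrt{p}$ prefactor for all $p$ beyond some $p_1(\varepsilon)$, so the $m=p$ contribution is absorbed into the main bound. I would take $p(\varepsilon)$ to be the maximum of this $p_1(\varepsilon)$ and $\lceil(1.0085\,e/\varepsilon)^2\rceil$.

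The final numerical inequality follows from $W(1/e)\approx 0.278465$, which gives $2/W(1/e)<7.19$ and hence $(2+\varepsilon)/W(1/e)=2/W(1/e)+\varepsilon/W(1/e)<7.19+3.6\,\varepsilon$. There is no genuine obstacle here: the analytical content is identical to the previous theorem, and the only item requiring care is the bookkeeping that chooses $p(\varepsilon)$ large enough for both the applicability of Lemma \ref{exp_stab_region1+eps} and the suppression of the $m=p$ term.
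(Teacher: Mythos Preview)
Your proposal is correct and matches the paper's approach exactly: the paper's proof consists solely of the remark that one follows the steps of Theorem~\ref{MEEupperestimate} with Lemma~\ref{exp_stab_region1+eps} replacing Lemma~\ref{extrapollemma4.3shape}, and you have executed precisely that substitution. One incidental simplification: the ratio of the $m=p$ bound to the main bound equals $\sqrt{2\pi(p-1)/p}\,(e\,W(1/e))^p$, which is independent of $\varepsilon$, so your $p_1(\varepsilon)$ is actually a fixed constant (indeed $p_1=3$ suffices, as in the previous theorem) and $p(\varepsilon)$ is governed entirely by the hypothesis of Lemma~\ref{exp_stab_region1+eps}.
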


It is also possible to determine the exact values of $\M^{\mathrm{EE}}_p(\Ss_p)$ for small $p$
values by using the same direct approach as in \cite{exp_stability_regions} (when deriving 
the values in
Tables \ref{farthestpointTaylorpoly} and \ref{farthestpointTaylorpolylefthalfplane} of
the present work). Namely,  the left-hand side of 
 (\ref{QEEpmlexplicitform}) is rewritten by using $z=x+i y$ as
\[\frac{m^p} {(p-m)!m!}\left(\left(1+\frac{x}{m}\right)^2+\frac{y^2}{m^2}\right)^{\frac{\ell-1}{2}},
\]
then, for any fixed $p\ge 2$, $1\le m\le p$ and $1\le \ell \le m$, 
\textit{Mathematica}'s \texttt{Maximize} command is applied with the above objective function 
and $x+i y\in\Ss_p$.
The obtained exact algebraic numbers have been rounded up and presented in 
Table \ref{maxQEEpmlexactvalues}.
The computing time is roughly doubled in each step (from $p$ to $p+1$):
determination of  $\M^{\mathrm{EE}}_{14}(\Ss_{14})$
took approximately 21 minutes (on a commercial computer). 
Based on the data in Table \ref{maxQEEpmlexactvalues} and assuming
that $\M^{\mathrm{EE}}_p(\Ss_p)$ grows like $c_1 \cdot\frac{c_2^p}{p}$ with some $c_1>0$ and $c_2>0$
 (as suggested by the results in the present and the following subsections), 
the best fit returned by \textit{Mathematica}'s \texttt{FindFit} is
\begin{equation}\label{6.16baseapprox}
\M^{\mathrm{EE}}_p(\Ss_p)\approx 0.0391 \cdot\frac{6.16^p}{p}.
\end{equation}

\begin{table}[h!]
\begin{center}
    \begin{tabular}{c|c||c|c}
    $p$ & $\M^{\mathrm{EE}}_p(\Ss_p)$ & $p$ & $\M^{\mathrm{EE}}_p(\Ss_p)$ \\ \hline
    2 & $\sqrt{2 \left(1+\sqrt{2}\right)}\approx 2.198$  & 9 &  61597.788\\
    3 & $6.192$ & 10 &  $340968.029$\\
    4 & $25.614$ & 11 &  $1.871\cdot 10^6$\\
    5 & $115.313$ & 12 &  $1.020\cdot 10^7$\\
    6 & $524.610$  & 13 &  $5.520\cdot 10^7$\\
    7 & $2427.838$ & 14 &  $3.168\cdot 10^8$ \\
    8 & $11431.562$ &  &  \\ \hline
       \end{tabular}
\caption{For $p\ge 3$, the exact maximum values are rounded up.\label{maxQEEpmlexactvalues}}
\end{center}
\end{table}

Table \ref{maxQEEpmlgoodupperbounds} extends the upper bounds on 
$\M^{\mathrm{EE}}_p(\Ss_p)$ to the range $15\le p \le 20$ in the following sense. 
By using the triangle inequality (cf. (\ref{QEEpmlexplicitform}))
\[
\left\vert Q_{p,m,\ell}^{\mathrm{EE}}(z)\right\vert = \frac{m^{p} }{(p-m)! m!}
\left\vert 1+\frac{z}{m}\right\vert^{\ell-1}\le 
\frac{m^{p} }{(p-m)! m!}\left( 1+\frac{|z|}{m}\right)^{\ell-1},
\]
the value of $|z|$ on the right has been replaced by its corresponding maximal value given in 
Table \ref{farthestpointTaylorpoly}, then the maximal value of the new right-hand side
has been (effortlessly) determined for all $(m,\ell)$ pairs with $1\le m \le p$ and $1\le \ell \le m$.
Table \ref{maxQEEpmlgoodupperbounds}  intentionally overlaps with 
Table \ref{maxQEEpmlexactvalues} (for $11\le p\le 14$) so that the effect of the triangle inequality on the estimates
 can be studied. Tables \ref{maxQEEpmlexactvalues} and \ref{maxQEEpmlgoodupperbounds} of course
contain much better upper estimates than Theorem \ref{MEEupperestimate}.\\

\begin{table}[h!]
\begin{center}
    \begin{tabular}{c|c||c|c}
    $p$ & $\M^{\mathrm{EE}}_p(\Ss_p)\le\ldots $ & $p$ & $\M^{\mathrm{EE}}_p(\Ss_p)\le\ldots$ \\ \hline
    11 &  $5.011\cdot 10^6$ & 16 &  $2.890\cdot 10^{10}$ \\
    12 & $2.782\cdot 10^7$ & 17 &  $1.687\cdot 10^{11}$ \\
    13 & $1.529\cdot 10^8$ & 18 &  $9.742\cdot 10^{11}$ \\
    14 & $8.333\cdot 10^8$ & 19 &  $5.581\cdot 10^{12}$ \\
    15 & $4.889\cdot 10^9$  & 20 &  $3.327\cdot 10^{13}$ \\ \hline
       \end{tabular}
\caption{Upper bounds on $\M^{\mathrm{EE}}_p(\Ss_p)$\label{maxQEEpmlgoodupperbounds}}
\end{center}
\end{table}

Finally, one can ask what the smallest constant $c_3>0$ is such that
$\M^{\mathrm{EE}}_p(\Ss_p)\le c_3^p$ holds for all $p$ large enough.
On one hand, we see that the ratios $\frac{\max_{z\in\Ss_p} |z|}{p}$ in Table \ref{farthestpointTaylorpoly}
are increasing for $7\le p\le 20$, already suggesting that $6.16<c_3$ (see (\ref{6.16baseapprox})).
However, Theorem \ref{7.19theorem} says that $c_3$ is not larger than $\approx 7.19$.
The following heuristic argument tries to find this optimal $c_3$. 

By Lemma \ref{exp_stab_region1+eps} and 
Remark \ref{asymptoptimalityremark}, $\max_{z\in\Ss_p} |z|\approx p$. 
But \cite[Theorem 5.4]{jeltschnevanlinna} also says that $1\in \Ss_\infty$
(recall that $\Ss_\infty$ in \cite{jeltschnevanlinna} denotes the ``limit'' of the
\textit{scaled} stability regions), 
hence there is a subsequence $p_k\to +\infty$
and $z_k\in\Ss_{p_k}$ such that $z_k/p_k\to 1$ ($k\to +\infty$).
So for these $z_k$ and $p_k$ values
\[
\M^{\mathrm{EE}}_{p_k}(\Ss_{p_k})\ge 
\M^{\mathrm{EE}}_{p_k}(\{z_k\})\approx \max_{1\le m \le p_k }\max_{1\le \ell \le m}
\frac{m^{p_k-1} }{(p_k-m)! (m-1)!}
\left\vert 1+\frac{p_k}{m}\right\vert^{\ell-1}=
\]
\[
\max_{1\le m \le p_k } \frac{m^{p_k-1} }{(p_k-m)! (m-1)!}
\left( 1+\frac{p_k}{m}\right)^{m-1}.
\]
On the other hand, by using the triangle inequality with Lemma \ref{exp_stab_region1+eps},  
we conclude (for general $p$ large enough) that $\M^{\mathrm{EE}}_{p}(\Ss_p)$
can be estimated from above by a quantity which is approximately
\begin{equation}\label{asymptoptupperestimateforMpEESp}
\max_{1\le m \le p } \frac{m^{p-1} }{(p-m)! (m-1)!}
\left( 1+\frac{p}{m}\right)^{m-1}.
\end{equation}
Therefore, (\ref{asymptoptupperestimateforMpEESp}) yields an 
asymptotically optimal upper estimate for $\M^{\mathrm{EE}}_p(\Ss_p)$.
Notice that in the proof of Theorem \ref{MEEupperestimate}, the two terms of the
product, $\frac{m^{p-1} }{(p-m)! (m-1)!}$ and
$\left\vert 1+\frac{z}{m}\right\vert^{\ell-1}$, are estimated from above separately; here 
we treat them together.
Now we again introduce $x:=\frac{m}{p}$ to rewrite the above maximum as
$\max_{0\le x \le 1 }  f_{1,p}(x)$, with
\[
f_{1,p}(x):=\frac{(p x)^p}{(p-p x)! (p x)!}\left(1+\frac{1}{x}\right)^{p x-1}.
\]
We see that each of these $f_{1,p}$ functions is unimodal, 
having a unique maximum.
It is reasonable to assume that, for large $p$, the abscissa of the maximum of $f_{1,p}$ 
remains approximately the same if we eliminate the $\Gamma$ functions by means of
Lemma \ref{factorialestimatelemma}; we also know that the first inequality in that lemma
is asymptotically optimal. Hence (\ref{asymptoptupperestimateforMpEESp}) is approximately
equal to $\max_{0\le x \le 1-\frac{1}{p}} \left( \frac{1}{2\pi p} f_{2,p}(x)\right)$, with
\[
f_{2,p}(x):=\frac{\sqrt{x} \left(e\, x^{1-2 x} (1-x)^{x-1} (1+x)^x\right)^p}{(1+x) \sqrt{1-x}}.
\]
Just as in the proof of Theorem \ref{MEEupperestimate}, here we also restrict the domain of $f_{2,p}$ to
$\left[0,1-\frac{1}{p}\right]$, because the application of Lemma \ref{factorialestimatelemma} introduces
a singularity at $x=1$ in $f_{2,p}$ not present in $f_{1,p}$; this restriction is justified by checking that
the value of (\ref{asymptoptupperestimateforMpEESp}) is unchanged if $m$ is restricted to 
$1\le m\le p-1$ provided that $p$ is large enough.

Now we differentiate $f_{2,p}$ to get
\[
\frac{1}{2} e^p\cdot x^{p(1-2x)-\frac{1}{2}}\cdot (1-x)^{p (x-1)-\frac{3}{2}}\cdot  (1+x)^{p x-2} f_{3,p}(x),
\]
where 
\[
f_{3,p}(x):=2 p (1-x) \left(1+x (x+1) \ln\left(\frac{1}{x^2}-1\right)\right)+1+x(2 x-1).
\]
We would like to estimate the unique zero of $f_{3,p}$ in $\left[0,1-\frac{1}{p}\right]$ for large $p$
(see Figure \ref{fig:f3pgraph}). But since $1+x(2 x-1)\ge \frac{7}{8}$, we see that if $p\to +\infty$, then
the unique zero of $f_{3,p}$ in $\left[0,1-\frac{1}{p}\right]$ should converge to the unique zero of
$1+x (x+1) \ln\left(\frac{1}{x^2}-1\right)$, being equal to $x^*\approx 
0.8143$ (this constant cannot
be simply expressed in terms of the usual functions, such as the $W$ function). Therefore, the
asymptotically optimal upper estimate of  $\M^{\mathrm{EE}}_p(\Ss_p)$ is 
\[\frac{1}{2\pi p} f_{2,p}(x^*)\approx \frac{6.868^p}{2\pi p}.\]

\begin{figure}
\begin{center}
\includegraphics[width=0.5\textwidth]{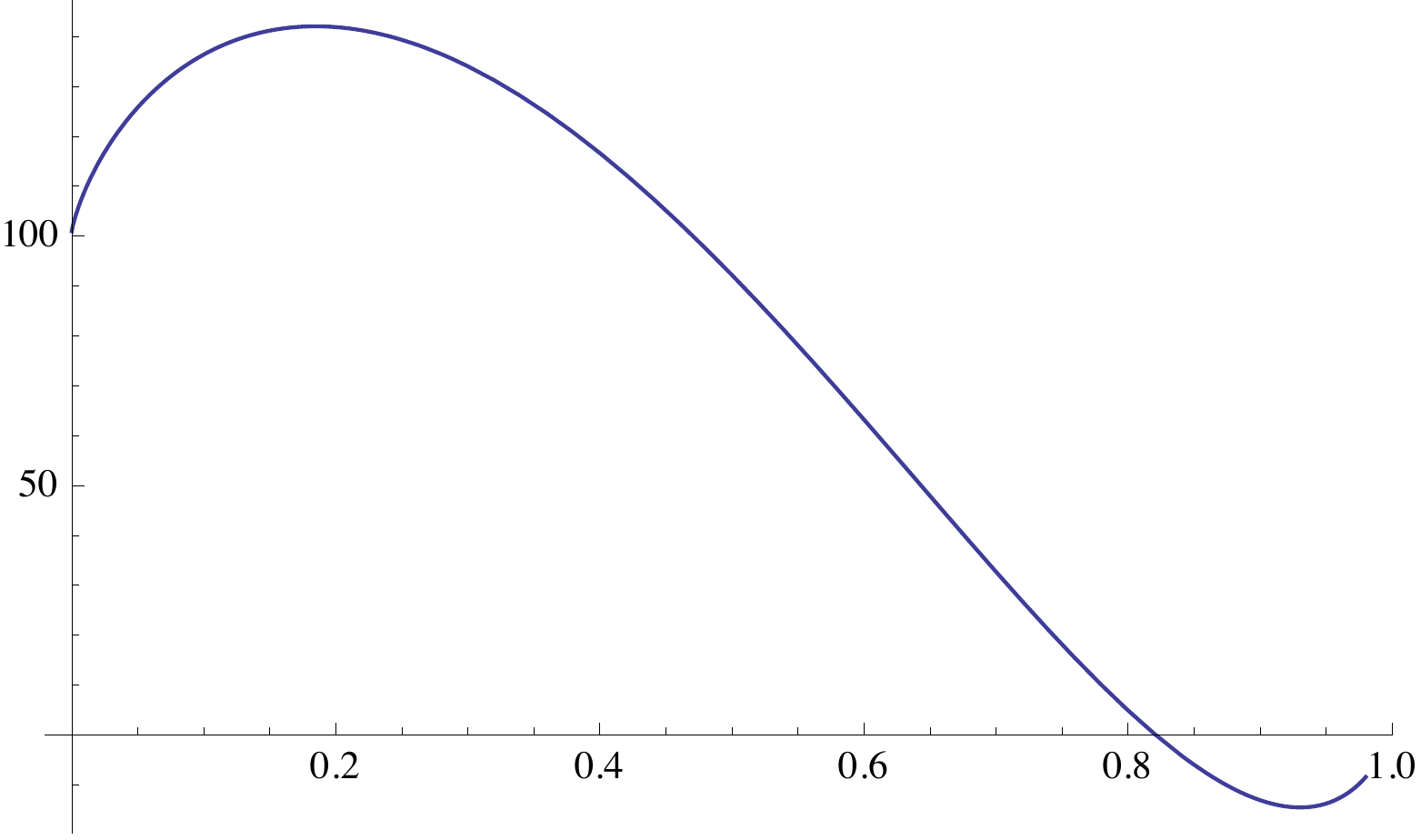}
\caption{The graph of $f_{3,50}$. \label{fig:f3pgraph}}
\end{center}
\end{figure}

We believe that the above heuristic explanation can be made more rigorous by using
techniques described in Remark \ref{remark41}, for example.

\subsubsection{The case ${\mathcal{S}}=\Ss_p\cap \Cleft$}
\label{subsection422}

The maximum internal amplification
factors with $z$ restricted to $\Ss_p\cap \Complex_{-,0}$ play an important role in
practical computations.

Repeating the proof of Theorem \ref{MEEupperestimate} by using Lemma
\ref{exp_stab_region95/100} instead of Lemma \ref{extrapollemma4.3shape}, 
we obtain the following theorem (the $p=2$ case is exactly covered by Table \ref{maxQEEpmllefthalfplaneexactvalues}).

\begin{thm}\label{MEElefthalfplaneupperestimate} For any $p\ge 3$ 
\[
\M^{\mathrm{EE}}_p(\Ss_p\cap \Complex_{-,0})\le 
\frac{\left(\frac{1.95}{W\left(\frac{1}{e}\right)}\right)^p}{3.9 \pi  \sqrt{p-1}}<
\frac{7.01^p}{3.9 \pi  \sqrt{p-1}}.
\]
\end{thm}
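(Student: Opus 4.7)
The plan is to run the proof of Theorem \ref{MEEupperestimate} verbatim, replacing the general stability-region bound by the tighter one available in the left half-plane. Starting from Lemma \ref{QEEpmlexplicitformlemma},
\[
|Q_{p,m,\ell}^{\mathrm{EE}}(z)| = \frac{m^{p-1}}{(p-m)!\,(m-1)!}\left|1+\frac{z}{m}\right|^{\ell-1},
\]
the earlier proof bounded $|1+z/m|^{\ell-1}$ using $|z|\le 1.6p$ (Lemma \ref{extrapollemma4.3shape}), producing the factor $2.6^{p-1}$. Since $p\ge 3$, Lemma \ref{exp_stab_region95/100} now gives the stronger inequality $|z|\le 0.95 p$ on $\Ss_p\cap\Cleft$. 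Combining this with $\ell\le m$ and the monotonicity of $m\mapsto(1+c/m)^{m-1}$ in $m\ge 1$ for fixed $c>0$, I would obtain
\[
\left|1+\frac{z}{m}\right|^{\ell-1}\le\left(1+\frac{0.95 p}{m}\right)^{m-1}\le 1.95^{p-1},
\]
so that the sharper constant $1.95$ replaces $2.6$ throughout the remainder of the argument (the exceptional case $z=-m$, $\ell=1$ is handled as a limit, exactly as in the earlier proof).

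From this point onward, every subsequent step transfers unchanged. Apply Lemma \ref{factorialestimatelemma} to both $(p-m)!$ and $m!$, isolate the boundary case $m=p$, and for $1\le m\le p-1$ substitute $x:=m/p\in[1/p,1-1/p]$. The maximization of $(1/x-1)^{x-1}$ on $(0,1)$ is unimodal with critical point $x=1/(1+W(1/e))$ and maximum value $e^{W(1/e)}$. Collecting constants and using the Lambert identity $W(1/e)\,e^{W(1/e)}=1/e$, the estimate for $1\le m\le p-1$ becomes
\[
\frac{(1.95\,e)^p}{3.9\pi\sqrt{p-1}}\,\bigl(e^{W(1/e)}\bigr)^p \;=\; \frac{\bigl(1.95/W(1/e)\bigr)^p}{3.9\pi\sqrt{p-1}},
\]
and the numerical inequality $1.95/W(1/e)<7.01$ follows directly from $W(1/e)\approx 0.2785$.

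The only nontrivial piece of bookkeeping I anticipate is verifying that the $m=p$ endpoint bound, of the form $1.95^{p-1}e^p/\sqrt{2\pi p}$, is dominated by the interior estimate just derived. This reduces to the inequality $e\,W(1/e)<1$, which holds comfortably since $W(1/e)<1/e$; hence the maximum over the two cases is given by the interior bound, as claimed. No new ideas are required beyond those of Theorem \ref{MEEupperestimate}, and the main obstacle is purely numerical: checking that the endpoint contribution stays below the interior one for all $p\ge 3$, not just asymptotically.
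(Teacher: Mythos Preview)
Your proposal is correct and follows precisely the approach the paper uses: the paper's own proof consists of a single sentence stating that one repeats the proof of Theorem~\ref{MEEupperestimate} with Lemma~\ref{exp_stab_region95/100} (giving $|z|\le 0.95p$) in place of Lemma~\ref{extrapollemma4.3shape}, which is exactly what you describe. Your identification of the one residual check---that the $m=p$ endpoint bound $1.95^{p-1}e^p/\sqrt{2\pi p}$ is dominated by the interior bound for all $p\ge 3$, not merely asymptotically---is the right thing to flag, and it holds by a short numerical verification (at $p=3$ the ratio already exceeds $1$, and the base $1/(eW(1/e))\approx 1.32>1$ ensures monotone growth thereafter).
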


The counterpart of Theorem \ref{7.19theorem} (relying on Lemma \ref{exp_stab_region1/e}) 
is the following.

\begin{thm}\label{4.92theorem}
For any $\varepsilon>0$
there is a $p(\varepsilon)\in\mathbb{N}^+$ such that for all $p\ge p(\varepsilon)$
\[
\M^{\mathrm{EE}}_p(\Ss_p\cap \Complex_{-,0})\le 
\frac{\left(\frac{1+\frac{1}{e}+\varepsilon}{W\left(\frac{1}{e}\right)}\right)^p}{(2+\frac{2}{e}+
2\varepsilon)\pi  \sqrt{p-1}}
< \frac{\left(4.92+3.6\varepsilon\right)^p}{(2+\frac{2}{e}+
2\varepsilon)\pi \sqrt{p-1}}.
\]
\end{thm}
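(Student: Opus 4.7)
The plan is to mimic the proof of Theorem \ref{MEEupperestimate} step by step, substituting Lemma \ref{exp_stab_region1/e} in place of Lemma \ref{extrapollemma4.3shape} at the one location where the magnitude of $z$ is first estimated, and then tracking how the constant $2.6 = 1 + 1.6$ gets replaced by $1 + 1/e + \varepsilon$.

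First, given $\varepsilon > 0$, I would pick $p(\varepsilon)$ at least as large as the threshold supplied by Lemma \ref{exp_stab_region1/e}, so that for $p \ge p(\varepsilon)$ and any $z \in \Ss_p \cap \Cleft$ we have $|z| \le (1/e + \varepsilon)\,p$. Feeding this into the chain (\ref{upperboundsonMpEESpinitialestimate}), and using (as there) the monotonicity of $m \mapsto (1 + c/m)^{m-1}$ for fixed $c > 0$, the bound becomes
\[
\left\vert 1+\frac{z}{m}\right\vert^{\ell-1} \le \left(1 + \frac{1}{e} + \varepsilon\right)^{p-1}.
\]
Combining with Lemma \ref{QEEpmlexplicitformlemma} then yields, for $1 \le m \le p$ and $1 \le \ell \le m$,
\[
\left\vert Q_{p,m,\ell}^{\mathrm{EE}}(z) \right\vert \le \frac{m^{p}}{(p-m)!\,m!}\cdot \frac{(1+1/e+\varepsilon)^p}{1+1/e+\varepsilon}.
\]

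Next, for $1 \le m \le p-1$ I would apply Lemma \ref{factorialestimatelemma} exactly as in the original proof to eliminate both factorials, then introduce $x = m/p \in [1/p,\,1 - 1/p]$ and invoke the Lambert $W$-function fact that $\max_{x \in (0,1)} (1/x - 1)^{x-1} = e^{W(1/e)} = 1/(e\,W(1/e))$. The factor of $e$ absorbed in $e^p$ (mirroring the $(2.6\,e)^p \cdot e^{p\,W(1/e)} = (2.6/W(1/e))^p$ collapse of the original proof) produces
\[
\frac{\bigl((1+1/e+\varepsilon)/W(1/e)\bigr)^p}{(2 + 2/e + 2\varepsilon)\,\pi\,\sqrt{p-1}}.
\]
The separated $m = p$ case gives, via Lemma \ref{factorialestimatelemma} again, a bound of order $(1+1/e+\varepsilon)^{p-1} e^p / \sqrt{2\pi p}$, whose geometric base $e(1+1/e+\varepsilon)$ is strictly smaller than $(1+1/e+\varepsilon)/W(1/e)$ since $W(1/e) \approx 0.2785 < 1/e$; enlarging $p(\varepsilon)$ if necessary ensures that the $m \le p-1$ branch dominates for all $p \ge p(\varepsilon)$.

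Finally I would justify the looser numerical form on the right-hand side: the map $\varepsilon \mapsto (1+1/e+\varepsilon)/W(1/e)$ is affine with slope $1/W(1/e) < 3.6$ and value at $\varepsilon = 0$ equal to $(1+1/e)/W(1/e) \approx 4.912 < 4.92$, so $(1+1/e+\varepsilon)/W(1/e) < 4.92 + 3.6\,\varepsilon$. The main obstacle is essentially bookkeeping, namely ensuring that the single threshold $p(\varepsilon)$ simultaneously validates Lemma \ref{exp_stab_region1/e}, the dominance of the $m \le p-1$ branch over the $m = p$ branch, and (if one wishes to absorb the $(2+2/e+2\varepsilon)\pi\sqrt{p-1}$ denominator into a cleaner closed form) any auxiliary monotonicity; none of this introduces genuinely new ideas beyond those already used in Theorems \ref{MEEupperestimate} and \ref{7.19theorem}.
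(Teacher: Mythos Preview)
Your proposal is correct and follows essentially the same route as the paper: the paper states only that Theorem~\ref{4.92theorem} is the counterpart of Theorem~\ref{7.19theorem} obtained by relying on Lemma~\ref{exp_stab_region1/e}, which is precisely the substitution you carry out in the proof of Theorem~\ref{MEEupperestimate}. Your handling of the $m=p$ branch, the tracking of the constant $1+1/e+\varepsilon$ through the argument, and the affine justification of the numerical inequality $4.92+3.6\varepsilon$ are all in line with (indeed more explicit than) what the paper sketches.
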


The first few  exact
values of 
$\M^{\mathrm{EE}}_p(\Ss_p\cap \Complex_{-,0})$ are displayed in 
Table \ref{maxQEEpmllefthalfplaneexactvalues} for $2\le p\le 14$. 

\begin{table}[h!]
\begin{center}
    \begin{tabular}{c|c||c|c}
    $p$ & $\M^{\mathrm{EE}}_p(\Ss_p\cap \Complex_{-,0})$ & $p$ & $\M^{\mathrm{EE}}_p(\Ss_p\cap \Complex_{-,0})$ \\ \hline
    2 & $\sqrt{2 \left(1+\sqrt{2}\right)}\approx 2.198$  & 9 & $11631.367$  \\
    3 & $6.192$ & 10 &  $46860.486$ \\
    4 & $51/2=25.5$ & 11 &  $98425.587$ \\
    5 & $\left(47+\sqrt{65}\right)^{3/2}/\sqrt{18}\approx 96.305$ & 12 &   $336910.368$ \\
    6 & $190.163$  & 13 &  $1.444\cdot 10^6$ \\
    7 & $631.328$ & 14 &   $6.561\cdot 10^6$ \\
    8 & $2549.961$ &  &  \\ \hline
       \end{tabular}
\caption{Maximum internal amplification factors for the first few explicit Euler extrapolation methods 
of order $p$ with respect to the absolute stability region of the method in the left half of the
complex plane. For $p=3$ or $p\ge 6$, the exact maximum values are rounded up 
(the exact values corresponding to $p\in\{4, 5\}$ are simple algebraic numbers).
\label{maxQEEpmllefthalfplaneexactvalues}}
\end{center}
\end{table}

The technique to determine these constants 
is completely analogous to the one
we used for creating Table \ref{maxQEEpmlexactvalues}. Computing the $p=14$ case now took 
$22.7$ minutes. We remark that the growth rate of the numbers in Table 
\ref{maxQEEpmllefthalfplaneexactvalues} is a little less regular
(due to the ``wavy'' behavior of $\max_{z\in\Ss_p\cap \Complex_{0}} |z|$ shown in
\cite{exp_stability_regions}), nevertheless, they grow like
\begin{equation}\label{4.86inequality}
0.0226\cdot \frac{4.86^p}{p}.
\end{equation}

By using the triangle inequality (cf. the corresponding 
description of Table \ref{maxQEEpmlgoodupperbounds}) 
and values from Table \ref{farthestpointTaylorpolylefthalfplane}, 
Table \ref{maxQEEpmllefthalfplanegoodupperbounds} extends the exact values for 
$2\le p\le 14$ to upper bounds on
$\M^{\mathrm{EE}}_p(\Ss_p\cap \Complex_{-,0})$ for $15\le p\le 20$. Thanks
to Table \ref{farthestpointTaylorpolylefthalfplane}, creating each entry of
Table \ref{maxQEEpmllefthalfplanegoodupperbounds} took only a fraction of a second.\\

\begin{table}[h!]
\begin{center}
    \begin{tabular}{c|c||c|c}
    $p$ & $\M^{\mathrm{EE}}_p(\Ss_p\cap \Complex_{-,0})\le\ldots $ & $p$ & $\M^{\mathrm{EE}}_p(\Ss_p\cap \Complex_{-,0})\le\ldots$ \\ \hline
    11 &  $ 1.911\cdot 10^6$ & 16 &  $ 3.800\cdot 10^{9}$ \\
    12 & $ 8.577\cdot 10^6$ & 17 &  $ 1.738\cdot 10^{10}$ \\
    13 & $ 3.847\cdot 10^7$ & 18 &  $ 7.910\cdot 10^{10}$ \\
    14 & $ 1.801\cdot 10^8$ & 19 &  $3.820\cdot 10^{11}$ \\
    15 & $ 8.946\cdot 10^8$  & 20 &  $1.893\cdot 10^{12}$ \\ \hline
       \end{tabular}
\caption{Upper bounds on $\M^{\mathrm{EE}}_p(\Ss_p\cap \Complex_{-,0})$
\label{maxQEEpmllefthalfplanegoodupperbounds}}
\end{center}
\end{table}

Finally, we can repeat the \textit{heuristic} argument at the end of Section \ref{subsection421}
to determine the smallest $c_4>0$ such that
$\M^{\mathrm{EE}}_p(\Ss_p\cap \Complex_{-,0})\le c_4^p$ for large $p$.
Since $\frac{\max_{z\in\Ss_p\cap \Complex_{-,0}} |z|}{p}$ is decreasing for $2\le p\le 20$ 
(see Table \ref{farthestpointTaylorpolylefthalfplane}), it is not surprising that this time $c_4<4.86<4.92$ 
(cf. (\ref{4.86inequality}) and Theorem \ref{4.92theorem}). Now \cite[Theorem 5.4]{jeltschnevanlinna}
says that $\Ss_{\infty}\cap \Complex_{-,0}=\{z\in\Complex : |z|\le \frac{1}{e}\}\cap \Complex_{-,0}$,
so---also taking into account the scaling present in the definition of $\Ss_\infty$---the 
largest value of $\frac{m^{p-1} }{(p-m)! (m-1)!}
\left\vert 1+\frac{z}{m}\right\vert^{\ell-1}$
(cf. the left-hand side of (\ref{QEEpmlexplicitform})) occurs when $\ell=m$ and 
$z\approx \pm\frac{i p}{e}$ (with $p$ and $m$ fixed). Hence we are interested in the quantity
\begin{equation}\label{maxwithsqareroot}
\max_{1\le m \le p } \frac{m^{p} }{(p-m)! m!}
\left( \sqrt{1+\left(\frac{p}{e m}\right)^2}\right)^{m-1}
\end{equation}
for large $p$. After analogous computations as before, we arrive at the following. Define
\[
f_{4,p}(x):=\frac{e \left(e\,x^{1-x} \, (1-x)^{x-1} \,\left(1+\frac{1}{e^2 x^2}\right)^{x/2}
   \right)^p}{2 \pi  p\sqrt{x}\,\sqrt{1-x}\,\sqrt{\frac{1}{x^2}+e^2}  },
\]
\[
f_{5,p}(x):=1-x+e^2 x^2+x\left(1+e^2 x^2\right) \left(\ln \sqrt{1+\frac{1}{e^2 x^2}}+
\ln \left(\frac{1}{x}-1\right)\right),
\]
and let $x^{**}\approx 0.7711$ denote the unique zero of $f_{5,p}$ in $[0,1]$. 
Then (\ref{maxwithsqareroot}) is approximately 
\[
f_{4,p}(x^{**})\approx 0.342\cdot \frac{3.885^p}{p}
\]
for large $p$.

\subsubsection{The case ${\mathcal{S}}=\{0\}$}

In this subsection 
we investigate a third quantity,  
\[\M^{\mathrm{EE}}_p(\{0\})=\max_{1\le m\le p}\frac{m^{p} }{(p-m)! m!}\] 
relevant when using methods with very small step sizes. The corresponding calculations in the proof 
of Theorem \ref{MEEupperestimate} already yield the following upper bound.

\begin{thm}  For any $p\ge 3$, we have
\[
\M^{\mathrm{EE}}_p(\{0\})\le 
 \frac{\left(\frac{1}{W\left(\frac{1}{e}\right)}\right)^p}{2 \pi  \sqrt{p-1}}
 < \frac{3.592^p}{2 \pi  \sqrt{p-1}}.
\]
\end{thm}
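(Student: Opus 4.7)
The plan is to specialize the argument of Theorem \ref{MEEupperestimate} to $z=0$, where the factor $|1+z/m|^{\ell-1}$ collapses to $1$ so that the ``$2.6^{p-1}$'' slack disappears. By Lemma \ref{QEEpmlexplicitformlemma}, $|Q^{\mathrm{EE}}_{p,m,\ell}(0)|$ is independent of $\ell$, hence
\[
\M^{\mathrm{EE}}_p(\{0\}) = \max_{1 \le m \le p} \frac{m^p}{(p-m)!\,m!}.
\]

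For $1 \le m \le p-1$, I would apply Lemma \ref{factorialestimatelemma} (in the form $n! > (n/e)^n\sqrt{2\pi n}$) to both $(p-m)!$ and $m!$, and then use $\min_{1 \le m \le p-1} m(p-m) = p-1$, to obtain
\[
\frac{m^p}{(p-m)!\,m!} < \frac{e^p}{2\pi\sqrt{p-1}} \cdot \left(\left(\frac{1}{x}-1\right)^{x-1}\right)^{p},
\]
with $x := m/p$. The same unimodality analysis that was carried out in the proof of Theorem \ref{MEEupperestimate} shows that $\max_{x \in (0,1)} (1/x-1)^{x-1} = e^{W(1/e)}$, attained at $x = 1/(1+W(1/e))$. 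Combining this with the defining identity $W(1/e)\,e^{W(1/e)} = 1/e$, equivalently $e \cdot e^{W(1/e)} = 1/W(1/e)$, yields the claimed bound $(1/W(1/e))^p/(2\pi\sqrt{p-1})$.

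The boundary case $m = p$, where $(p-m)! = 0!$ falls outside the range of the Stirling estimate used above, must be verified separately: one has $p^p/p! \le e^p/\sqrt{2\pi p}$. Since $1/W(1/e) \approx 3.591 > e$, the main bound dominates this quantity for all $p \ge 3$; a direct numerical check at $p = 3$ gives the main bound $\approx 5.21$ versus $p^p/p! = 4.5$, and the ratio grows exponentially in $p$. The final numerical estimate $1/W(1/e) < 3.592$ follows by direct evaluation of the Lambert $W$ function at $1/e$. No genuine obstacle is anticipated here, since this is essentially the ``$z=0$ specialization'' of Theorem \ref{MEEupperestimate}, with the Lambert $W$ optimization and the Stirling-type estimate already in place; the only care point is the dispatching of the boundary index $m=p$.
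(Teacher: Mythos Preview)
Your proposal is correct and follows essentially the same approach as the paper. The paper's own proof consists of the single sentence ``The corresponding calculations in the proof of Theorem \ref{MEEupperestimate} already yield the following upper bound,'' and what you have written is precisely the specialization of that argument to $z=0$ (dropping the $2.6^{p-1}$ factor, applying Lemma \ref{factorialestimatelemma} and the unimodality of $(1/x-1)^{x-1}$, and comparing the $m=p$ boundary term), so your write-up simply makes explicit what the paper leaves to the reader.
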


In addition, we can easily formulate explicit lower bounds on $\M^{\mathrm{EE}}_p(\{0\})$ also,
yielding lower estimates 
of $\M^{\mathrm{EE}}_p(\Ss_p)$ or $\M^{\mathrm{EE}}_p(\Ss_p\cap \Complex_{-,0})$
due to
\[
\M^{\mathrm{EE}}_p(\Ss_p)\ge \M^{\mathrm{EE}}_p(\Ss_p\cap \Complex_{-,0})\ge 
\M^{\mathrm{EE}}_p(\{0\}).
\]
First by assuming that $p\ge 4$ is divisible by 4, we choose 
$m=3p/4$ and apply Lemma \ref{factorialestimatelemma} to get
\[
\max_{1\le m\le p}\frac{m^{p} }{(p-m)! m!}\ge 
4\cdot 3^{p-1} e^{p-3} p^{\frac{3 (p-2)}{4}} (3 p-4)^{\frac{1}{2}-\frac{3 p}{4}}.
\]
Then we can simplify this lower bound and prove that for any $p\ge 4$ 
the above right-hand side is estimated from below by
$\frac{\sqrt{3}}{2 e^2}\cdot \frac{\left( \sqrt[4]{3} e \right)^p}{p}$.
On the other hand, if $p\ge 4$ is of the form $p=4k_1-1$ (with a suitable integer $k_1$), then we choose 
$m=3(p+1)/4$; if $p=4k_2-2$, then $m=3(p+2)/4$; finally, if $p=4k_3+1$, then $m=3(p-1)/4$
 is chosen: in all these cases, we verify in a similar manner that the given lower bound holds.
The choice of $m\approx \frac{3p}{4}$ is interpreted later in this 
subsection. Therefore the following theorem is established.

\begin{thm}\label{MEElowerestimate}  For any $p\ge 4$,
\[
\M^{\mathrm{EE}}_p(\{0\})\ge 
\frac{\sqrt{3}}{2 e^2}\cdot \frac{\left( \sqrt[4]{3}\, e \right)^p}{p}> 0.117\cdot \frac{ 3.577 ^p}{p}.
\]
\end{thm}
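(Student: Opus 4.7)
The strategy is to lower bound the explicit form
\[
\M^{\mathrm{EE}}_p(\{0\})=\max_{1\le m\le p}\frac{m^p}{(p-m)!\,m!}
\]
by evaluating the right-hand side at an explicit integer $m$ close to $3p/4$, then applying the upper bound $n!\le e(n/e)^n\sqrt{n}$ from Lemma \ref{factorialestimatelemma} to both $(p-m)!$ and $m!$. The choice $m\approx 3p/4$ is motivated as follows: the true maximizer over real $m$ is governed by the same $W(1/e)$-type transcendental equation seen in the previous subsections and would yield the larger growth constant $e/W(1/e)\approx 3.59$ appearing in the matching upper bound; the clean rational ratio $3/4$ sacrifices a small amount of the constant but makes $m/(p-m)=3$ exactly, so that $(m/(p-m))^{p-m}$ becomes a pure power of $3$ and the algebraic base $\sqrt[4]{3}\,e$ emerges.

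Concretely, whenever $1\le m\le p-1$, the two-factorial bound of Lemma \ref{factorialestimatelemma} gives
\[
\frac{m^p}{(p-m)!\,m!}\ \ge\ \frac{e^{p-2}}{\sqrt{m(p-m)}}\cdot\left(\frac{m}{p-m}\right)^{p-m}.
\]
I would first handle the divisible case $p=4k$ by setting $m=3p/4$: then $m/(p-m)=3$ and $\sqrt{m(p-m)}=p\sqrt{3}/4$, so the right-hand side simplifies to $\frac{4}{\sqrt{3}\,e^2}\cdot\frac{(\sqrt[4]{3}\,e)^p}{p}$, which already exceeds the target $\frac{\sqrt{3}}{2e^2}\cdot\frac{(\sqrt[4]{3}\,e)^p}{p}$ by a comfortable factor of more than two.

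For the other three residue classes I would take the integer choices $m=3(p+1)/4$, $m=3(p+2)/4$, and $m=3(p-1)/4$ prescribed in the theorem statement, each within distance $3/2$ of $3p/4$. The same Stirling-based inequality still applies, and after simplification the claim reduces in every case to a one-variable inequality in $p$ of the form $c_{\text{case}}(p)\cdot (\sqrt[4]{3}\,e)^p/p\ge (\sqrt{3}/(2e^2))\cdot(\sqrt[4]{3}\,e)^p/p$, where the prefactor $c_{\text{case}}(p)$ is an elementary function of $p$ tending to $\frac{4}{\sqrt{3}\,e^2}$ as $p\to\infty$. These inequalities are then handled by the same routine monotonicity-plus-base-case recipe used throughout Section \ref{optimalthirdorderSSPmethods}: check the smallest legal $p$ in each class directly and verify that the relevant quotient is monotone in $p$.

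The only mild obstacle I anticipate is the boundary case $p=6$ (where the prescription gives $m=p=6$, so $(p-m)!=0!$ and the two-factorial bound collapses). I would treat it in isolation, either by the direct numerical verification $\tfrac{6^6}{6!}=64.8$ against the target $\approx 39.3$, or by the single-factorial bound $\tfrac{p^p}{p!}\ge e^{p-1}/\sqrt{p}$ which already dominates it. The closing estimate $\frac{\sqrt{3}}{2e^2}\cdot(\sqrt[4]{3}\,e)^p/p>0.117\cdot 3.577^p/p$ is just the elementary numerical fact that $\sqrt{3}/(2e^2)>0.117$ and $\sqrt[4]{3}\,e>3.577$.
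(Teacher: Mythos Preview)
Your approach is essentially the same as the paper's: pick $m\approx 3p/4$ according to the residue of $p$ modulo $4$, apply the upper bound $n!\le e(n/e)^n\sqrt{n}$ from Lemma~\ref{factorialestimatelemma} to both factorials, and verify the resulting elementary inequality in each residue class. You have in fact been slightly more careful than the paper: you spotted that for $p=6$ the prescription $m=3(p+2)/4$ gives $m=p$, so that $(p-m)!=0!$ falls outside the range of Lemma~\ref{factorialestimatelemma}, and you correctly dispose of this case by direct evaluation (your numerical target should be $\approx 40.9$ rather than $39.3$, but $64.8$ still exceeds it).
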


As for the first few exact values of $\M^{\mathrm{EE}}_p(\{0\})$, see 
Table \ref{maxQEEpmlattheorigin}.

\begin{table}[h!]
\begin{center}
    \begin{tabular}{c|c||c|c}
    $p$ & $\M^{\mathrm{EE}}_p(\{0\})$ & $p$ & $\M^{\mathrm{EE}}_p(\{0\})$ \\ \hline
    2 & $2$  & 12 & $137787$  \\
    3 & $9/2=4.5$ & 13 &  $459289$ \\
    4 & $27/2=13.5$ & 14 &  $1.586\cdot 10^6$ \\
    5 & $128/3\approx 42.7$ & 15 &   $5.361\cdot 10^6$ \\
    6 & $3125/24\approx 130.3$  & 16 &  $1.781\cdot 10^7$ \\
    7 & $1944/5=388.8 $ & 17 &   $5.830\cdot 10^7$ \\
    8 & $5832/5=1166.4$ & 18 &   $2.041\cdot 10^8$ \\
    9 & $4003.4$ & 19 &   $7.064\cdot 10^8$ \\
    10 & $13315.3$ & 20 &   $2.408\cdot 10^9$ \\
    11 & $43238.9$ &  &  \\ \hline
       \end{tabular}
\caption{For $p\ge 9$, the exact (rational) values are rounded up. 
\label{maxQEEpmlattheorigin}}
\end{center}
\end{table}

Finally, we give some information on 
the $p\to +\infty$ asymptotic growth rate of $\M^{\mathrm{EE}}_p(\{0\})$,
illustrating a ``discrete'' and a ``continuous'' approach
(actually, the following results were obtained earlier than the ones in
 previous subsections). 
To save space, we give only the basic ideas of the proofs.

For any $p\ge 2$ and $m\in [2,p]\cap\mathbb{N}$, let us define 
\begin{equation}\label{fpmdefinition}
f_p(m):=\frac{m^{p} }{(p-m)! m!}=\frac{m^p}{p!}\binom{p}{m}.
\end{equation}
Then by  analyzing the function 
$
m\mapsto g_p(m):=m^{-p} (m+1)^{p-1} (p-m)=\frac{f_p(m+1)}{f_p(m)}
$
defined for $p\ge 3$ and letting $m$ to be a ``continuous variable'', 
we can prove that there is a unique $\mathbb{N}\not \ni m^*(p)\in (2,p)$ such that
$g_p(m^*(p))=1$. This means that the function $[1,p]\cap\mathbb{N} \ni m\mapsto f_p(m)$ 
has a unique maximum at $\lceil m^*(p)\rceil$. The following lemma 
describes the asymptotic location of this unique maximum
 within the interval $[1,p]$.

\begin{lem}
\[
\lim_{p\to +\infty} \frac{m^*(p)}{p}=\frac{1}{1+W\left(\frac{1}{e}\right)}\approx 0.782188.
\]
\end{lem}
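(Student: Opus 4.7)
My approach is to substitute $m = xp$ into the defining equation $g_p(m^*(p)) = 1$ and pass to the limit as $p \to +\infty$. Writing out the equation as $(m+1)^{p-1}(p-m) = m^p$ and setting $m = xp$, I factor $(xp+1)^{p-1} = (xp)^{p-1}(1+1/(xp))^{p-1}$, divide both sides by $x^{p-1} p^p$, and take logarithms to obtain the equivalent transcendental equation
\[
h_p(x) := (p-1)\ln\!\left(1+\tfrac{1}{xp}\right) + \ln(1-x) - \ln x = 0 \qquad (x \in (0,1)).
\]
Since $m^*(p)\in(2,p)$ is uniquely characterized by $g_p(m^*(p))=1$, the rescaled value $x_p := m^*(p)/p$ is the unique zero of $h_p$ in $(0,1)$. (One checks directly that $h_p'(x) = -\frac{p-1}{x^2 p + x} - \frac{1}{1-x} - \frac{1}{x} < 0$, together with $\lim_{x\to 0^+} h_p(x) = +\infty$ and $\lim_{x\to 1^-} h_p(x) = -\infty$, so the root exists and is unique.)

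Next, using $\ln(1+u) = u + O(u^2)$ as $u \to 0$, one sees that $(p-1)\ln(1+1/(xp)) = 1/x + O(1/p)$ uniformly on every compact subinterval of $(0,1)$. Hence $h_p \to h_\infty$ locally uniformly, where
\[
h_\infty(x) := \frac{1}{x} + \ln(1-x) - \ln x = \frac{1}{x} + \ln\!\left(\tfrac{1}{x}-1\right).
\]
This is precisely the transcendental equation already singled out in Section \ref{subsection421}: its unique root in $(0,1)$ is $x^* = 1/(1+W(1/e)) \approx 0.782188$. (Indeed, $h_\infty'(x) = -1/x - 1/x^2 - 1/(1-x) < 0$ on $(0,1)$, with the same boundary behavior as $h_p$, so uniqueness is immediate.)

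Finally, I will bootstrap from the limit equation to convergence of $x_p$ by a standard monotonicity/continuity argument. Given any $\varepsilon > 0$ small enough that $[x^*-\varepsilon, x^*+\varepsilon] \subset (0,1)$, monotonicity of $h_\infty$ yields $h_\infty(x^*-\varepsilon) > 0 > h_\infty(x^*+\varepsilon)$; by locally uniform convergence $h_p \to h_\infty$, the same strict sign pattern persists for $h_p$ once $p$ is large enough. The monotonicity of $h_p$ and the intermediate value theorem then force $x_p \in (x^*-\varepsilon, x^*+\varepsilon)$, which is exactly the asserted limit. The only point requiring some care is that the convergence $h_p \to h_\infty$ must be uniform on the compact interval $[x^*-\varepsilon, x^*+\varepsilon]$ (which stays away from both endpoints $0$ and $1$), but this follows from the quantitative bound $\bigl|(p-1)\ln(1+1/(xp)) - 1/x\bigr| \le C_\varepsilon /p$ valid on that interval; this is the main technical step but it is routine.
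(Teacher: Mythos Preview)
Your proof is correct and takes a genuinely different route from the paper's sketch. The paper remarks that it ``could not use a monotonicity argument'' and instead first confines $m^*(p)/p$ to $(1/2,1)$, then rearranges $g_p(m^*(p))=1$ and runs a bootstrap (iterated substitution) to obtain both existence and value of the limit. You bypass this entirely: after the rescaling $x=m/p$ and the logarithmic reformulation $h_p(x)=0$, you exploit monotonicity \emph{in $x$} (not in $p$) of each $h_p$ and of the limit $h_\infty$, together with the locally uniform convergence $h_p\to h_\infty$ on compact subintervals of $(0,1)$. The sign-trapping argument at $x^*\pm\varepsilon$ then pins down $x_p$ with no bootstrap needed.

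What each approach buys: your argument is shorter and conceptually cleaner---it is essentially a robust ``convergence of roots under uniform convergence of strictly monotone functions'' principle, and it does not require the preliminary localisation $m^*(p)/p\in(1/2,1)$. The paper's bootstrap, while more hands-on, presumably yields finer asymptotic information (higher-order terms in the expansion of $m^*(p)/p$) as a by-product, whereas your method gives only the limit itself. It is also worth noting that the paper later, in Remark~\ref{remark41}, proves a closely related convergence result for the continuous analogue $A_p(x)$ via a general lemma on pointwise limits of strictly unimodal functions---that lemma is very much in the spirit of your sign-trapping step.
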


\noindent The interesting part in the proof has been to establish the existence of the above limit: we could not use
a monotonicity argument; instead, we first prove that $\frac{m^*(p)}{p}\in 
\left(\frac{1}{2},1\right)$ for all $p\ge 3$,
then by suitably rearranging $g_p(m^*(p))=1$, that is, the defining equation of $m^*(p)$, 
a bootstrap argument yields the existence of the limit together with its value.

Now, by using the $\Gamma$ function instead of the factorials as usual, the domain of definition of
$f_p$ is extended to the real interval $[1,p]$. Then by evaluating 
$f_p\left(\frac{1}{1+W\left(\frac{1}{e}\right)}\cdot p\right)$ and replacing the $\Gamma$ functions
with the first terms of their asymptotic expansion, we get that
$
\max_{1\le m\le p} f_p(m)
$
is approximately 
\begin{equation}\label{3.59112estimate}
\frac{1+W\left(\frac{1}{e}\right)}{2 \pi  \sqrt{W\left(\frac{1}{e}\right)}}
\frac{\left(\frac{1}{W\left(\frac{1}{e}\right)}\right)^p}{p} 
\approx 0.385588\cdot \frac{3.59112^p}{p} \quad\quad  (p\to +\infty), 
\end{equation}
see Figure \ref{fig:Mee0exactvsasymptotics}.

\begin{figure}
\begin{center}
\includegraphics[width=0.5\textwidth]{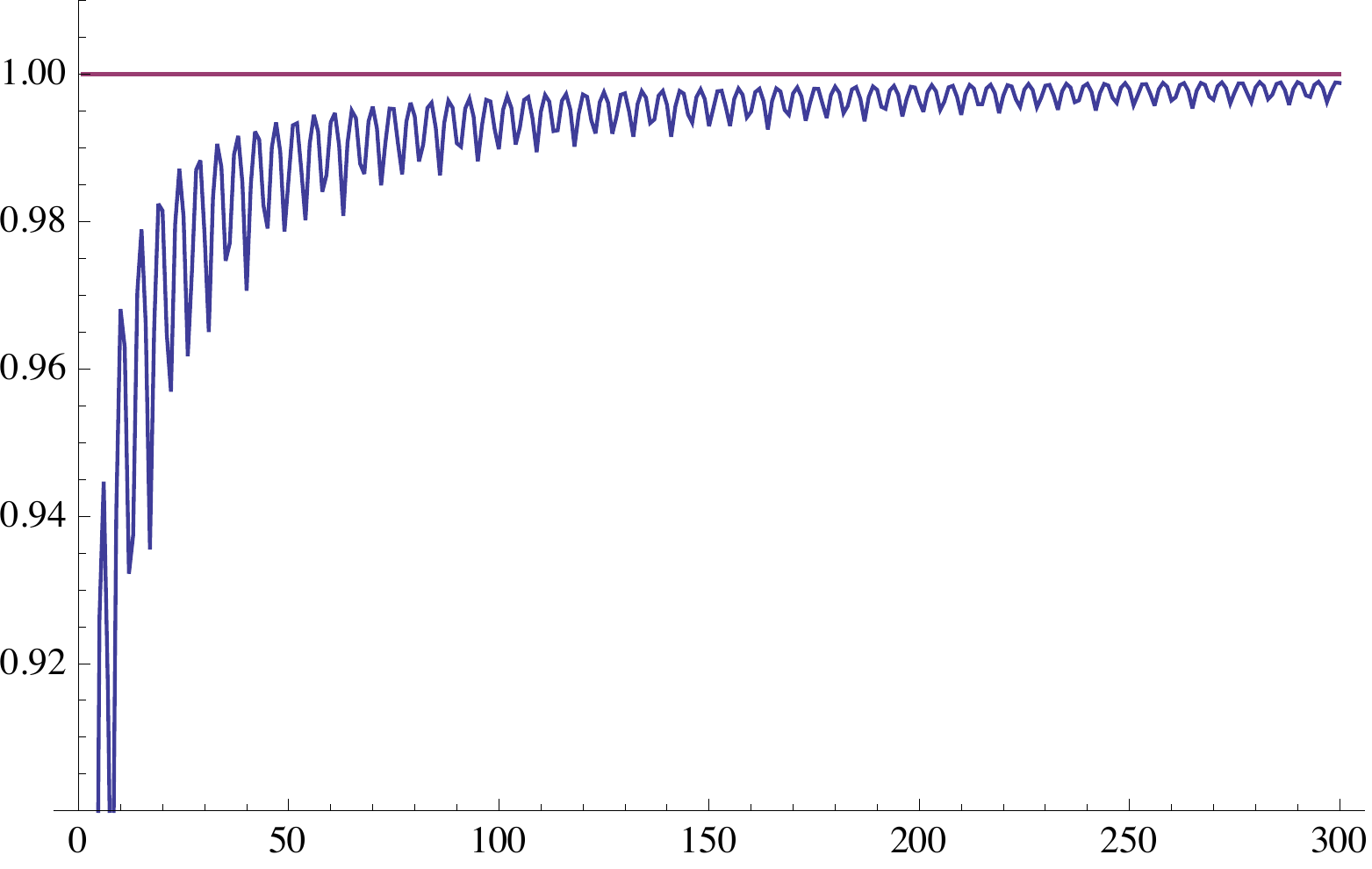}
\caption{The ratio between the quantities $\M^{\mathrm{EE}}_p(\{0\})$ and (\ref{3.59112estimate}) 
for $2\le p\le 300$ (with linear interpolation). \label{fig:Mee0exactvsasymptotics}}
\end{center}
\end{figure}

\begin{rem}\label{remark41} 
We were also able to formulate and prove the ``truly continuous'' (and slightly general,
but technically more demanding) 
counterpart of the above lemma, which we briefly describe now. Instead of $f_p$
in (\ref{fpmdefinition}), let us consider its extension,
$\frac{m^{p} }{\Gamma(1+p-m)\Gamma(1+m)}$, where $m\in [1,p]$. After  
a scaling $x:=\frac{m}{p}$, the domain of definition of the function 
\[
A_p(x):=\frac{(p x)^p}{\Gamma (1+p-p x)\Gamma (1+p x)}
\]
can be taken to be the whole (and now fixed) interval $x\in [0,1]$. Within this remark we will say 
that a function $f$ is {\em{strictly unimodal}} in $[0,1]$, if there is a unique $x^*\in[0,1]$ 
such that $f$ is strictly increasing on $[0,x^*]$, has a strict global maximum at $x^*$, 
then strictly decreasing on $[x^*,1]$. First we claim that for any fixed $p\ge 2$, 
$A_p$ is strictly unimodal in $[0,1]$.

Indeed, by using the 
following integral representation of the {\em{digamma function}}
(traditionally denoted by $\psi$, and by {\texttt{\em PolyGamma}} in Mathematica)
\[
\frac{\Gamma^\prime(z)}{\Gamma(z)}=\int_0^1 \frac{1-t^{z-1}}{1-t}dt -\gamma
\]
valid in the open right half-plane ($\gamma\approx 0.577216$ is the Euler-Mascheroni constant),
we have that
\[
\partial_x A_p(x)=\frac{p^{p+1} x^{p-1}}{\Gamma(1+p x)\ \Gamma(1+p (1-x)) }
\left[1+x \left(\frac{\Gamma^\prime(1+p(1- x))}{\Gamma(1+p (1-x))}-
\frac{\Gamma^\prime(1+p x)}{\Gamma(1+p x)}\right)\right]=
\]
\[
\frac{p^{p+1} x^{p-1}}{\Gamma(1+p x)\ \Gamma(1+p (1-x)) }
\, {\mathcal{I}}_p(x)
\]
with ${\mathcal{I}}_p(x):=\int_0^1 1+x\frac{t^{px}-t^{p(1-x)}}{1-t} \,dt$.  
Since the factor in front of ${\mathcal{I}}_p(x)$ is positive for $0<x<1$, it is enough to 
determine the sign of ${\mathcal{I}}_p(x)$. Examining the boundedness of its integrand
and appealing to the Dominated Convergence Theorem, we can prove that the function
\[
[0,1]\ni x\mapsto {\mathcal{I}}_p(x)
\]
is continuous. Then we show that ${\mathcal{I}}_p(x)\ge 1$ for $x\in\left[0,\frac{1}{2}\right]$,
${\mathcal{I}}_p(\cdot)$ is strictly decreasing on $\left[\frac{1}{2},1\right]$, and
${\mathcal{I}}_p(1)=-\sum_{k=2}^p\frac{1}{k}<0$. These imply that 
${\mathcal{I}}_p(\cdot)$ has a unique zero in $[0,1]$, which is denoted by $m_A(p)$. The
strict unimodality of $A_p$ is proved.

Our second claim (whose proof is just an application of Stirling's formula) 
is that for any fixed $0<x<1$, we have
\[
\lim_{p\to +\infty} \sqrt[p]{A_p(x)}= e \left(\frac{1-x}{x}\right)^{x-1}.
\]

Then we notice that both $\sqrt[p]{A_p(\cdot)}$ and the function $x\mapsto e \left(\frac{1-x}{x}\right)^{x-1}$
are strictly unimodal. Moreover, the abscissa of the unique maximum of $\sqrt[p]{A_p(\cdot)}$ is $m_A(p)$,
while that of $x\mapsto e \left(\frac{1-x}{x}\right)^{x-1}$ is 
$\frac{1}{1+W\left(\frac{1}{e}\right)}$.

We can now formulate our main result saying that 
\[
\lim_{p\to +\infty} m_A(p)=\frac{1}{1+W\left(\frac{1}{e}\right)}.
\]
The proof of this convergence result is obtained by combining the above claims and the 
following lemma about strictly unimodal functions (the lemma below has a short proof by 
contradiction based on the 
``\,$\forall\varepsilon$ $\exists N$ $\ldots$'' definition of the limit).
\begin{lem} Suppose that a sequence of strictly unimodal functions $\varphi_n:[0,1]\to\mathbb{R}$ 
converges pointwise on $[0,1]$ to a strictly unimodal function $\varphi:[0,1]\to\mathbb{R}$.
 Let $m_{\varphi}(n)\in [0,1]$ denote the abscissa of the unique maximum of $\varphi_n$, and 
$m_{\varphi}\in [0,1]$ denote the abscissa of the unique maximum of $\varphi$. Then 
$m_{\varphi}(n) \to m_{\varphi}$ as $n\to +\infty$.
\end{lem}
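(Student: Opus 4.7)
My plan is to follow the hint and argue by contradiction. Suppose, for the sake of contradiction, that $m_{\varphi}(n)\not\to m_{\varphi}$ as $n\to+\infty$. By the ``$\forall\varepsilon\,\exists N$'' definition of convergence, there exist some $\varepsilon_0>0$ and a subsequence $\{n_k\}$ of positive integers such that $|m_{\varphi}(n_k)-m_{\varphi}|\ge \varepsilon_0$ for every $k$. By pigeonhole, at least one of the inequalities $m_{\varphi}(n_k)\ge m_{\varphi}+\varepsilon_0$ or $m_{\varphi}(n_k)\le m_{\varphi}-\varepsilon_0$ holds for infinitely many $k$; after passing to a further subsequence I may assume the first alternative holds for every $k$ (the other case is symmetric). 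Note that, since $m_{\varphi}(n_k)\in[0,1]$, this alternative forces $m_{\varphi}+\varepsilon_0\le 1$ automatically.

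The central step is then to exploit the strict unimodality of each $\varphi_{n_k}$: since the unique maximum of $\varphi_{n_k}$ lies at $m_{\varphi}(n_k)\ge m_{\varphi}+\varepsilon_0$, the function $\varphi_{n_k}$ is strictly increasing on the entire interval $[0,m_{\varphi}+\varepsilon_0]$. In particular,
\[
\varphi_{n_k}(m_{\varphi})<\varphi_{n_k}(m_{\varphi}+\varepsilon_0) \qquad \text{for all } k.
\]
Passing to the limit $k\to+\infty$ and invoking the assumed pointwise convergence $\varphi_{n_k}\to\varphi$ at the two fixed points $m_{\varphi}$ and $m_{\varphi}+\varepsilon_0$, one obtains
\[
\varphi(m_{\varphi})\le \varphi(m_{\varphi}+\varepsilon_0).
\]
On the other hand, the strict unimodality of $\varphi$ with unique maximum at $m_{\varphi}$, together with $m_{\varphi}<m_{\varphi}+\varepsilon_0\le 1$, yields the strict reverse inequality $\varphi(m_{\varphi}+\varepsilon_0)<\varphi(m_{\varphi})$, giving the desired contradiction.

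The proof is short and its structure is essentially dictated by the hint, so I do not anticipate a substantive obstacle. The one place requiring mild attention is the dichotomy at the start: if $m_{\varphi}$ happens to equal $1$ (respectively $0$), only the ``$\le m_{\varphi}-\varepsilon_0$'' (respectively ``$\ge m_{\varphi}+\varepsilon_0$'') case can occur, but since $m_{\varphi}(n_k)\in[0,1]$ this is automatic and does not require a separate argument. The conceptual content of the proof is the observation that pointwise convergence at only the two fixed points $m_{\varphi}$ and $m_{\varphi}\pm\varepsilon_0$ is enough: the unimodality of $\varphi_{n_k}$ translates a location statement about the maximizer into a comparison between the values of $\varphi_{n_k}$ at these two fixed points, and that comparison then passes to the limit and contradicts the strict unimodality of $\varphi$.
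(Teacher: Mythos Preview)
Your proof is correct and follows precisely the approach indicated in the paper, which only states that ``the lemma below has a short proof by contradiction based on the `$\forall\varepsilon$ $\exists N$ $\ldots$' definition of the limit'' without spelling out the details. Your argument---extracting a one-sided subsequence, using unimodality of $\varphi_{n_k}$ to compare values at the two fixed points $m_\varphi$ and $m_\varphi+\varepsilon_0$, and passing to the limit to contradict the strict unimodality of $\varphi$---is exactly what the paper has in mind.
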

\end{rem}

\subsection{Bounds on $\M^{\mathrm{EM}}_p({\mathcal{S}})$}\label{subsection43boundsonMpEM}

Throughout the section, $p\ge 2$ denotes an \textit{even} integer and we let $r=\frac{p}{2}$. 

\subsubsection{The case ${\mathcal{S}}=\Ss_p$}\label{subsecion431}

In order to estimate the 
\[
\left\vert Q_{p,m,\ell}^{\mathrm{EM}}(z)\right\vert = \frac{2 m^{2r} }{(r-m)! (r+m)!}
\,\vert q_{m,2m-\ell+1}(z)  \vert
\]
quantities for $1\le m \le r$ and $1\le \ell \le 2m$  from Lemma \ref{QEMpmlexplicitform},  let
us define for any 
$z\in \Complex$ an auxiliary sequence $\widetilde{q}_{m,j}(z)$ by $\widetilde{q}_{m,0}(z):=0$,
$\widetilde{q}_{m,1}(z):=1$, and
\[
\widetilde{q}_{m,j}(z):=\frac{|z|}{m}\widetilde{q}_{m,j-1}(z)+\widetilde{q}_{m,j-2}(z) 
\quad\quad (2\le j\le 2m).
\]
Then an inductive application of the triangle inequality shows that for each admissible 
$(m,j)$ pair we have
$|q_{m,j}(z)|\le\widetilde{q}_{m,j}(z)$. Notice that $\widetilde{q}_{m,j}(z)$ is real and non-negative, so 
by solving its defining recursion we get
\[
|q_{m,j}(z)|\le\widetilde{q}_{m,j}(z)=\frac{m}{\sqrt{\left| z\right| ^2+4 m^2}}
\left(
\left(\frac{\left| z\right|+\sqrt{\left| z\right| ^2+4 m^2}}{2m}\right)^j
-\left(\frac{\left| z\right| -\sqrt{\left| z\right| ^2+4m^2}}{2m}\right)^j
\right)\le
\]
\[
\frac{m}{2m}\left(
\left(\frac{\left| z\right|+\sqrt{\left| z\right| ^2+4 m^2}}{2m}\right)^j
+\left\vert\frac{\left| z\right| -\sqrt{\left| z\right| ^2+4m^2}}{2m}\right\vert^j
\right)\le \left(\frac{\left| z\right|+\sqrt{\left| z\right| ^2+4 m^2}}{2m}\right)^j\le
\]
\[
\left(\frac{\left| z\right|+\sqrt{\left| z\right| ^2+4 m^2}}{2m}\right)^{2m}.
\]
For $z\in \Ss_p$, Lemma \ref{extrapollemma4.3shape}
tells us that $|z|\le 3.2r$, therefore
\[
\M^{\mathrm{EM}}_p(\Ss_p)\equiv \max_{1\le m \le r}\max_{1\le \ell \le 2m}\max_{z\in \Ss_p}\left\vert
Q_{p,m,\ell}^{\mathrm{EM}}(z)\right\vert\le \]
\[
\max_{1\le m \le r} \frac{2 m^{2r} }{(r-m)! (r+m)!}
\left(\frac{3.2r+\sqrt{(3.2r) ^2+4 m^2}}{2m}\right)^{2m}.
\]
Now we proceed similarly as in Section \ref{subsection421}. The $m=r$ case is treated later.
For $1\le m\le r-1$, the lower estimate from Lemma \ref{factorialestimatelemma} is used to
eliminate the factorials, then a new variable $x:=\frac{m}{r}\in \left[0,1-\frac{1}{r}\right]$ is introduced to get
\[
\frac{2 m^{2r} }{(r-m)! (r+m)!}
\left(\frac{3.2r+\sqrt{(3.2r) ^2+4 m^2}}{2m}\right)^{2m}\le 
\]
\[
\frac{e^{2r}}{\pi}\cdot m^{2 r} \cdot (r-m)^{m-r-\frac{1}{2}}\cdot 
   (r+m)^{-m-r-\frac{1}{2}} \cdot 
\left(
\frac{3.2 r+\sqrt{10.24 r^2+4m^2}}{2 m}
\right)^{2 m}=
\]
\[
\frac{e^{2r}}{\pi r}\cdot \left( x^{2 r-2 r x}\cdot (1-x)^{r x-r-\frac{1}{2}}\cdot 
   (1+x)^{-r x-r-\frac{1}{2}}\cdot
   \left(1.6+\sqrt{2.56+x^2}\right)^{2 r x}\right)=
\]
\[
\frac{e^{2r}}{\pi r}\cdot\frac{1}{\sqrt{1-x^2}}
\left[\sqrt{x^{2-2 x} \cdot (1-x)^{x-1} \cdot  (1+x)^{-x-1}\cdot 
   \left(1.6+\sqrt{2.56+x^2}\right)^{2 x}}\right]^{2r}.
\]
It can be proved that 
the function in $[\ldots]$ above is again a strictly unimodal
function (in the sense of Remark \ref{remark41}), and
 $[\ldots]<1.834$ for $x\in [0,1)$. As for the other factor, we have that
$
\frac{1}{\sqrt{1-x^2}}\le \frac{r}{\sqrt{2 r-1}}
$
since $x\in \left[0,1-\frac{1}{r}\right]$. Combining the above, we have proved that
\[
\max_{1\le m \le r-1}\max_{1\le \ell \le 2m}\max_{z\in \Ss_p}\left\vert
Q_{p,m,\ell}^{\mathrm{EM}}(z)\right\vert\le 
\frac{e^{2r}}{\pi r}\cdot \frac{r}{\sqrt{2 r-1}} \cdot 1.834^{2r}<
\frac{4.986^{2r}}{\pi\sqrt{2 r-1}}.
\]
As a last step, let us consider the $m=r$ case. Then
\[
\frac{2 m^{2r} }{(r-m)! (r+m)!}
\left(\frac{4r+\sqrt{(4r) ^2+4 m^2}}{2m}\right)^{2m}\Bigg|_{m=r}=
\frac{2 \left(\frac{8+\sqrt{89}}{5}\right)^{2 r} r^{2 r}}{(2r)!}<
\frac{\left(\frac{e}{10} \left(8+\sqrt{89}\right)
   \right)^{2 r}}{\sqrt{\pi } \sqrt{r}}.
\]
By comparing these upper bounds we obtain the following.

\begin{thm} For any $1\le r=\frac{p}{2}\le 8$,
\[
\M^{\mathrm{EM}}_p(\Ss_p)<\sqrt{\frac{2}{\pi}}\cdot\frac{4.74^{p}}{\sqrt{p}},
\]
while for any $r\ge 9$,
\[
\M^{\mathrm{EM}}_p(\Ss_p)<\frac{4.986^{p}}{\pi\sqrt{p-1}}.
\]
\end{thm}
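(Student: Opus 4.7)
The plan is to start from the explicit formula for $Q^{\mathrm{EM}}_{p,m,\ell}$ provided in Lemma \ref{QEMpmlexplicitform} and reduce the complex maximum over $z \in \Ss_p$ to a purely real optimization. First, I would introduce the auxiliary non-negative sequence $\widetilde{q}_{m,j}(z)$ satisfying the same three-term recursion as $q_{m,j}$ but with $|z|$ in place of $z/m$ appropriately; a triangle-inequality induction on $j$ immediately yields $|q_{m,j}(z)|\le \widetilde{q}_{m,j}(z)$. Since the recurrence is linear with constant coefficients, I would solve it explicitly, bound the difference of powers by the sum of absolute values, and conclude the clean estimate
\[
|q_{m,j}(z)| \le \left(\frac{|z|+\sqrt{|z|^2+4m^2}}{2m}\right)^{j},
\]
taking $j=2m$ (the worst case, since the base is at least $1$) to get a bound uniform in $\ell$.

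Next, invoking Lemma \ref{extrapollemma4.3shape} to replace $|z|$ by $3.2r$ on $\Ss_p$ reduces the task to bounding
\[
\max_{1\le m\le r}\ \frac{2m^{2r}}{(r-m)!(r+m)!}\left(\frac{3.2r+\sqrt{(3.2r)^2+4m^2}}{2m}\right)^{2m}.
\]
I would split into the cases $1\le m\le r-1$ and $m=r$, because $(r-m)!$ degenerates at the boundary and the Stirling-type lower bound of Lemma \ref{factorialestimatelemma} develops a singularity at $x:=m/r=1$.

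For the bulk case $1\le m\le r-1$, I would apply $n!\ge(n/e)^n\sqrt{2\pi n}$ to both factorials, substitute $x=m/r\in[0,1-1/r]$, and rewrite the expression as $\frac{e^{2r}}{\pi r}\cdot\frac{1}{\sqrt{1-x^2}}\cdot [G(x)]^{2r}$ where
\[
G(x):=\sqrt{\,x^{2-2x}(1-x)^{x-1}(1+x)^{-x-1}\bigl(1.6+\sqrt{2.56+x^2}\bigr)^{2x}\,}.
\]
The main obstacle, which I expect to be the most delicate step, is establishing that $G$ is strictly unimodal on $[0,1)$ with $\sup G<1.834$; I would analyze the sign of $(\ln G)'$ by reducing it to a monotone transcendental expression whose unique zero locates the maximum, then bound $G$ at that maximum numerically. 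The auxiliary factor is controlled by $1/\sqrt{1-x^2}\le r/\sqrt{2r-1}$, valid since $x\le 1-1/r$. Combining gives an upper bound of $4.986^{2r}/(\pi\sqrt{2r-1})$.

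For the boundary case $m=r$ the expression simplifies to $2 r^{2r}\bigl((8+\sqrt{89})/5\bigr)^{2r}/(2r)!$, and applying the upper bound $n!\le e(n/e)^n\sqrt{n}$ from Lemma \ref{factorialestimatelemma} yields a bound whose base is $e(8+\sqrt{89})/10\approx 4.74$, giving approximately $\sqrt{2/\pi}\cdot 4.74^{2r}/\sqrt{2r}$. Finally, I would compare the two bounds: since the base $4.74$ is smaller than $4.986$ but the prefactors differ, the $m=r$ contribution dominates for small $r$ and the bulk bound takes over for larger $r$. A direct numerical check pinpoints the crossover at $r=9$, which yields the dichotomy stated in the theorem.
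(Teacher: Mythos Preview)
Your approach is essentially identical to the paper's: the same auxiliary sequence $\widetilde q_{m,j}$, the same closed-form bound $\bigl((|z|+\sqrt{|z|^2+4m^2})/(2m)\bigr)^{2m}$, the same use of Lemma~\ref{extrapollemma4.3shape} to set $|z|\le 3.2r$, the same split into $1\le m\le r-1$ versus $m=r$, the same Stirling substitution $x=m/r$ leading to the unimodal function bounded by $1.834$, and the same comparison at the end yielding the crossover at $r=9$. One small slip: in the boundary case $m=r$ you write ``applying the upper bound $n!\le e(n/e)^n\sqrt{n}$'', but since $(2r)!$ sits in the denominator you need the \emph{lower} bound $n!\ge (n/e)^n\sqrt{2\pi n}$ from Lemma~\ref{factorialestimatelemma}; your stated output $\sqrt{2/\pi}\cdot 4.74^{2r}/\sqrt{2r}$ is in fact what that lower bound produces, so this is only a wording error.
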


If instead of Lemma \ref{extrapollemma4.3shape} and $|z|\le 3.2r$, we use
Lemma \ref{exp_stab_region1+eps} and $|z|\le 2(1+\varepsilon)r$, we get our
next result.

\begin{thm}\label{3.539theorem}
For any $\varepsilon>0$
there is a $p(\varepsilon)\in\mathbb{N}^+$ such that for all even $p\ge p(\varepsilon)$
\[
\M^{\mathrm{EM}}_p(\Ss_p)< \frac{(3.539+\varepsilon)^{p}}{\pi\sqrt{p-1}}.
\]
\end{thm}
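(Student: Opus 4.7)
The plan is to mimic the preceding proof of the bound $\M^{\mathrm{EM}}_p(\Ss_p) < \frac{4.986^{p}}{\pi\sqrt{p-1}}$ with the single structural modification suggested in the hint: the input $|z|\le 3.2r$ coming from Lemma \ref{extrapollemma4.3shape} is replaced, via Lemma \ref{exp_stab_region1+eps}, by $|z|\le 2(1+\varepsilon')r$ for a suitably chosen $\varepsilon'=\varepsilon'(\varepsilon)>0$. This is valid provided $p\ge(1.0085\, e/\varepsilon')^2$, which defines the starting index $p(\varepsilon)\in\mathbb{N}^+$ in the statement. Writing $r=p/2$ throughout, I would keep exactly the same inequality $|q_{m,j}(z)|\le\bigl((|z|+\sqrt{|z|^2+4m^2})/(2m)\bigr)^{2m}$ established in Section \ref{subsecion431} from the closed form of the recursion \eqref{littleqEM}; this inequality is independent of the stability region used.

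Substituting the new bound on $|z|$ into Lemma \ref{QEMpmlexplicitform} and splitting the maximum over $1\le m\le r$ into the bulk case $1\le m\le r-1$ and the boundary case $m=r$, I would apply the lower Stirling bound of Lemma \ref{factorialestimatelemma} to the factorials $(r\pm m)!$ and introduce the variable $x:=m/r\in[0,1-1/r]$. As in the preceding proof, this expresses the bulk estimate in the form
\[
\frac{e^{2r}}{\pi r}\cdot\frac{1}{\sqrt{1-x^2}}\cdot\bigl[h_{\varepsilon'}(x)\bigr]^{2r},
\]
where
\[
h_{\varepsilon'}(x):=\sqrt{x^{2-2x}(1-x)^{x-1}(1+x)^{-x-1}\bigl((1+\varepsilon')+\sqrt{(1+\varepsilon')^2+x^2}\bigr)^{2x}},
\]
and where $1/\sqrt{1-x^2}\le r/\sqrt{2r-1}$ is used exactly as before to obtain the prefactor $\frac{1}{\pi\sqrt{2r-1}}$.

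The crux is then to show that, for every $\varepsilon>0$, one may pick $\varepsilon'$ small enough that
\[
e\cdot \max_{x\in[0,1-1/r]} h_{\varepsilon'}(x)\le 3.539+\varepsilon.
\]
I would argue this in two steps. First, a direct differentiation of $h_{\varepsilon'}$ (paralleling the unimodality argument used in Section \ref{subsection421} and in the preceding theorem) shows that $h_{\varepsilon'}$ is strictly unimodal on $(0,1)$ with a unique positive maximum attained at some interior $x^{*}(\varepsilon')\in(0,1)$, whose value depends continuously on $\varepsilon'$. Second, a continuous-dependence/bootstrap argument of the kind outlined in Remark \ref{remark41} shows that $\max_{x\in[0,1)} h_{\varepsilon'}(x)\to \max_{x\in[0,1)} h_{0}(x)$ as $\varepsilon'\to 0^{+}$, and a direct numerical verification gives $e\sqrt{\max h_{0}}<3.539$ strictly; hence a small enough $\varepsilon'$ yields the claimed bound. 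The main obstacle is precisely this analytic step, since the maximum of $h_{0}$ is not expressible in closed form and one needs the strict inequality $e\sqrt{\max h_{0}}<3.539$ together with continuity in $\varepsilon'$ to absorb the $\varepsilon$ slack.

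Finally, for the boundary case $m=r$, a direct computation gives
\[
\frac{2m^{2r}}{(r-m)!(r+m)!}\left(\frac{(1+\varepsilon')r+\sqrt{(1+\varepsilon')^2r^2+m^2}}{m}\right)^{2m}\bigg|_{m=r}
= \frac{2\bigl((1+\varepsilon')+\sqrt{(1+\varepsilon')^2+1}\bigr)^{2r} r^{2r}}{(2r)!},
\]
which the upper Stirling bound estimates by $\bigl(e((1+\varepsilon')+\sqrt{(1+\varepsilon')^2+1})/2\bigr)^{2r}/\sqrt{\pi r}$. Since at $\varepsilon'=0$ this base equals $e(1+\sqrt{2})/2\approx 3.281<3.539$, the same choice of $\varepsilon'$ makes this boundary contribution dominated by $(3.539+\varepsilon)^{p}/(\pi\sqrt{p-1})$ for all sufficiently large even $p$. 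Comparing the two cases, and further increasing $p(\varepsilon)$ if necessary to absorb the polynomial factor $\sqrt{p-1}/\sqrt{2r-1}$ and the difference between the prefactors $\frac{1}{\pi\sqrt{2r-1}}$ and $\frac{1}{\sqrt{\pi r}}$ into an arbitrarily small modification of the base, yields the stated inequality.
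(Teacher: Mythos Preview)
Your proposal is correct and follows exactly the approach the paper takes: the paper's entire proof is the one-line remark ``If instead of Lemma \ref{extrapollemma4.3shape} and $|z|\le 3.2r$, we use Lemma \ref{exp_stab_region1+eps} and $|z|\le 2(1+\varepsilon)r$, we get our next result,'' and you have simply unpacked that. Two small slips to clean up: since $h_{\varepsilon'}$ already carries the square root, the key inequality should read $e\cdot\max h_0<3.539$ rather than $e\sqrt{\max h_0}<3.539$; and since $2r-1=p-1$, the ``polynomial factor $\sqrt{p-1}/\sqrt{2r-1}$'' you mention absorbing is identically $1$, so that step is unnecessary.
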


The exact values of $\M^{\mathrm{EM}}_p(\Ss_p)$ for even $2\le p\le 8$ are summarized also in
Table \ref{maxQEMpmlexactvalues}---by taking into account the equality observed in 
 (\ref{equalityoftwotables}). The computing time for $p=6$ was $0.5$ minutes, whereas the
$p=8$ case took $12$ minutes. Supposing an underlying growth law of the form
$c_1 \cdot\frac{c_2^p}{\sqrt{p}}$, we see that for the $p$ values given in this table 
\[
\M^{\mathrm{EM}}_p(\Ss_p)\approx 0.906 \cdot\frac{2.022^p}{\sqrt{p}}.
\]

Table \ref{MEMupperbounds} contains upper bounds on $\M^{\mathrm{EM}}_p(\Ss_p)$ 
(extended up to $p=20$) based on the 
upper estimate
\begin{equation}\label{MEMestimateintermsofabssquareroot}
\M^{\mathrm{EM}}_p(\Ss_p)\le 
\max_{1\le m \le r}\max_{z\in\Ss_p}\frac{2 m^{2r} }{(r-m)! (r+m)!}
\left(\frac{|z|+\sqrt{|z|^2+4 m^2}}{2m}\right)^{2m}.
\end{equation}
Then, instead of using $|z|\le 3.2r$ as earlier, we estimated $|z|$ from 
above by using Table \ref{farthestpointTaylorpoly} directly.

\begin{table}[h!]
\begin{center}
    \begin{tabular}{c|c||c|c}
    $p$ & $\M^{\mathrm{EM}}_p(\Ss_p)\le\ldots$ & $p$ & $\M^{\mathrm{EM}}_p(\Ss_p)\le\ldots$ \\ \hline
    2 &  6.69 & 12 & 19113 \\
    4 &  20.7 & 14 &  157442 \\
    6 & 85.5 & 16 &  $1.308\cdot 10^6$ \\
    8 &  439 & 18  &  $1.092\cdot 10^7$ \\
    10 &  2609 & 20 & $9.198\cdot 10^7$ \\ \hline
         \end{tabular}
\caption{Upper bounds on $\M^{\mathrm{EM}}_p(\Ss_p)$\label{MEMupperbounds}}
\end{center}
\end{table}

\subsubsection{The case ${\mathcal{S}}=\Ss_p\cap \Cleft$}

By using the same approach as in the previous subsection, but applying Lemmas
\ref{exp_stab_region95/100} and \ref{exp_stab_region1/e} instead of 
Lemmas \ref{extrapollemma4.3shape} and \ref{exp_stab_region1+eps}, respectively, 
the following two results can be proved (the $2\le p \le 10$ values in Theorem 
\ref{3.423theorem} are covered by Tables \ref{maxQEMpmlexactvalues} and \ref{MEMupperboundslefthalfplane}).

\begin{thm}\label{3.423theorem} For any $p\ge 12$, 
\[
\M^{\mathrm{EM}}_p(\Ss_p\cap \Complex_{-,0})<\frac{3.423^{p}}{\pi\sqrt{p-1}}.
\]
\end{thm}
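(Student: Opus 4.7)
The plan is to parallel the derivation of the bound on $\M^{\mathrm{EM}}_p(\Ss_p)$ in Section \ref{subsecion431} verbatim, substituting the sharper estimate $|z|\le 0.95\,p = 1.9\,r$ from Lemma \ref{exp_stab_region95/100} in place of $|z|\le 3.2\,r$ from Lemma \ref{extrapollemma4.3shape}. Since $p\ge 12$ ensures $p\ge 3$, Lemma \ref{exp_stab_region95/100} applies. The starting point is the region-independent inequality
\[
\M^{\mathrm{EM}}_p(\Ss)\le \max_{1\le m\le r}\max_{z\in \Ss}\frac{2m^{2r}}{(r-m)!(r+m)!}\left(\frac{|z|+\sqrt{|z|^2+4m^2}}{2m}\right)^{2m},
\]
obtained as in Section \ref{subsecion431} by bounding $|q_{m,j}(z)|$ by the non-negative auxiliary sequence $\widetilde{q}_{m,j}(|z|)$ and solving its recursion explicitly. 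Here we take $\Ss=\Ss_p\cap\Cleft$.

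The argument then splits into the cases $1\le m\le r-1$ and $m=r$. In the first case, inserting $|z|\le 1.9\,r$, applying the lower bound of Lemma \ref{factorialestimatelemma} to $(r-m)!(r+m)!$, and passing to the rescaled variable $x:=m/r\in[1/r,1-1/r]$, the bound reduces to
\[
\frac{e^{2r}}{\pi r}\cdot\frac{1}{\sqrt{1-x^2}}\cdot\bigl[h(x)\bigr]^{2r},
\]
with
\[
h(x):=\sqrt{x^{2-2x}(1-x)^{x-1}(1+x)^{-x-1}\bigl(0.95+\sqrt{0.9025+x^2}\bigr)^{2x}}.
\]
On the restricted domain, the prefactor $1/\sqrt{1-x^2}$ is dominated by $r/\sqrt{2r-1}$. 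The core task is to verify that $h(x)<3.423/e\approx 1.2593$ for all $x\in[0,1)$. As in Section \ref{subsecion431}, this follows by showing strict unimodality of $h$ (through analysis of the sign of $(\ln h^{2})'$, whose unique zero on $(0,1)$ locates the maximum at some $x^{*}$) and certifying the numerical value of $h(x^{*})$. With this in hand, the $1\le m\le r-1$ contribution is bounded by $3.423^{p}/(\pi\sqrt{p-1})$, as required.

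The boundary case $m=r$ yields
\[
\frac{2r^{2r}}{(2r)!}\left(\frac{1.9+\sqrt{7.61}}{2}\right)^{2r}
\le \frac{\bigl(e(1.9+\sqrt{7.61})/4\bigr)^{p}}{\sqrt{\pi r}},
\]
via the lower estimate in Lemma \ref{factorialestimatelemma} applied to $(2r)!$. A direct numerical check gives $e(1.9+\sqrt{7.61})/4\approx 3.165<3.423$, so this branch is absorbed into the previous one for every $p\ge 12$ (a short comparison of the two prefactors being routine). Combining the two cases concludes the proof.

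The main obstacle is the sharpness required in the estimate on $h$: because $e\cdot\max h$ is numerically almost exactly $3.423$, the bound has essentially no slack, and a crude inequality will not suffice. I would address this by establishing the existence of a unique critical point $x^{*}\in(0,1)$ of $h$ through a monotonicity argument on $(\ln h^{2})'$, localizing $x^{*}$ numerically, and then certifying $h(x^{*})<1.2593$ either by interval arithmetic or by the kind of explicit but elementary monotonicity chains used in the proofs of Lemma \ref{nunstarfinersandwiching} and the analogous bound in Section \ref{subsecion431}.
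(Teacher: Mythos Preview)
Your proposal is correct and follows essentially the same approach as the paper, which explicitly states that Theorem~\ref{3.423theorem} is obtained by repeating the argument of Section~\ref{subsecion431} with Lemma~\ref{exp_stab_region95/100} in place of Lemma~\ref{extrapollemma4.3shape}. Your decomposition into the cases $1\le m\le r-1$ and $m=r$, the passage to $x=m/r$, the identification of the function $h(x)$, and the numerical handling of the $m=r$ branch all mirror the paper's derivation precisely.
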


\begin{thm}\label{2.157theorem}
For any $\varepsilon>0$
there is a $p(\varepsilon)\in\mathbb{N}^+$ such that for all even $p\ge p(\varepsilon)$
\[
\M^{\mathrm{EM}}_p(\Ss_p\cap \Complex_{-,0})< \frac{(2.157+\varepsilon)^{p}}{\pi\sqrt{p-1}}.
\]
\end{thm}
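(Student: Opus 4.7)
The plan is to run the same machinery developed in Section 4.4.2 for Theorem \ref{3.423theorem} (which in turn mirrors the EM analysis over $\Ss_p$ in Section \ref{subsecion431}), but swap in the \emph{asymptotically optimal} Lemma \ref{exp_stab_region1/e} in place of the cruder Lemma \ref{exp_stab_region95/100}. That is, fix an auxiliary $\varepsilon' > 0$ (to be related to the target $\varepsilon$ at the end) and Lemma \ref{exp_stab_region1/e} furnishes a $p(\varepsilon')$ such that for every even $p\ge p(\varepsilon')$ and every $z\in \Ss_p\cap\Complex_{-,0}$ one has $|z|\le (1/e+\varepsilon')p=(2/e+2\varepsilon')r$, with $r=p/2$.

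Starting from the inductive envelope
\[
|q_{m,j}(z)|\le \left(\frac{|z|+\sqrt{|z|^2+4m^2}}{2m}\right)^{2m}
\]
already derived in the proof sketch preceding Theorem \ref{3.539theorem}, Lemma \ref{QEMpmlexplicitform} gives
\[
\M^{\mathrm{EM}}_p(\Ss_p\cap\Complex_{-,0})\le \max_{1\le m\le r}\frac{2m^{2r}}{(r-m)!(r+m)!}\left(\frac{c_{\varepsilon'}r+\sqrt{c_{\varepsilon'}^2 r^2+4m^2}}{2m}\right)^{2m},
\]
with $c_{\varepsilon'}:=2/e+2\varepsilon'$. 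I would then repeat verbatim the reduction of Section \ref{subsecion431}: treat the case $m=r$ separately (it yields a bound of the form $C_{\varepsilon'}^{p}/\sqrt{p}$ that is dominated by the other case), and for $1\le m\le r-1$ eliminate the factorials by the lower bound in Lemma \ref{factorialestimatelemma}, then substitute $x=m/r\in [0,1-1/r]$. The same algebra as in the previous subsection collapses the bound into
\[
\frac{e^{2r}}{\pi r}\cdot\frac{1}{\sqrt{1-x^2}}\cdot \left[h_{\varepsilon'}(x)\right]^{r},
\]
where
\[
h_{\varepsilon'}(x):=x^{2-2x}(1-x)^{x-1}(1+x)^{-x-1}\Bigl(\tfrac{1}{e}+\varepsilon'+\sqrt{(\tfrac{1}{e}+\varepsilon')^2+x^2}\Bigr)^{2x}.
\]
Exactly as before, $1/\sqrt{1-x^2}\le r/\sqrt{2r-1}$ on $[0,1-1/r]$, which is what produces the $\sqrt{p-1}$ denominator.

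The remaining task is to bound $\sup_{x\in[0,1)} h_{\varepsilon'}(x)$. Just as in the $c=3.2$ case of Section \ref{subsecion431}, $h_{\varepsilon'}$ is strictly unimodal on $[0,1)$ with a unique interior maximum depending continuously on $\varepsilon'$. The key numerical check is that at $\varepsilon'=0$ one has $\sqrt{\sup_{x}h_0(x)}<2.157/e\approx 0.7932$; a direct maximization (the critical equation is transcendental, cf.\ the $x^{**}$ that arose in Remark \ref{remark41}) delivers this value. By continuity of the max in $\varepsilon'$, for any target $\varepsilon>0$ one may choose $\varepsilon'>0$ small enough that $e\sqrt{\sup_x h_{\varepsilon'}(x)}<2.157+\varepsilon$, and then the conclusion
\[
\M^{\mathrm{EM}}_p(\Ss_p\cap\Complex_{-,0})<\frac{(2.157+\varepsilon)^{p}}{\pi\sqrt{p-1}}
\]
follows for all even $p$ larger than both $p(\varepsilon')$ and the threshold beyond which the $m=r$ contribution is dominated.

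The main obstacle is really only the bookkeeping of constants: the structural argument is identical to the proof of Theorem \ref{3.539theorem}, so the work lies in (i) verifying rigorously the unimodality of $h_{\varepsilon'}$ uniformly in small $\varepsilon'$ (which can be done either via the sign analysis of $h_{\varepsilon'}'$ as in Section \ref{subsection422}, or by the $f_{3,p}$-style argument from Section \ref{subsection421}), and (ii) bounding $e\sqrt{\sup h_0}$ above by $2.157$. A convenient way to handle (ii) cleanly is to fix $\varepsilon''>\varepsilon'$, numerically locate the critical point $x^{**}_{\varepsilon''}$ of $h_{\varepsilon''}$, and use monotonicity of $h_{\varepsilon'}(x^{**}_{\varepsilon''})$ in $\varepsilon'$ to produce an explicit rational upper envelope whose $e$-exponential stays strictly below $2.157+\varepsilon$.
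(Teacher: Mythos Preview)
Your proposal is correct and follows essentially the same route as the paper: the paper simply states that Theorems \ref{3.423theorem} and \ref{2.157theorem} are obtained by repeating the argument of Section \ref{subsecion431} with Lemmas \ref{exp_stab_region95/100} and \ref{exp_stab_region1/e} in place of Lemmas \ref{extrapollemma4.3shape} and \ref{exp_stab_region1+eps}, respectively, and you have spelled this out in detail. Your write-up is in fact more explicit than the paper's one-line justification; the only (harmless) slip is that the critical point you call $x^{**}$ actually appears at the end of Section \ref{subsection422}, not in Remark \ref{remark41}.
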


The quantities $\M^{\mathrm{EM}}_p(\Ss_p\cap \Complex_{-,0})$ have also been determined
exactly for even $2\le p\le 8$, and at least for these $p$ values we have found that
\begin{equation}\label{equalityoftwotables}
\M^{\mathrm{EM}}_p(\Ss_p\cap \Complex_{-,0})=\M^{\mathrm{EM}}_p(\Ss_p),
\end{equation}
see Table \ref{maxQEMpmlexactvalues}.

Finally, Table \ref{MEMupperboundslefthalfplane} (extending Table \ref{maxQEMpmlexactvalues} to 
larger $p$ values) is based on 
estimate (\ref{MEMestimateintermsofabssquareroot}) with 
$z\in\Ss_p\cap \Complex_{-,0}$ instead of $z\in\Ss_p$, then the maximal $|z|$ values
from Table \ref{farthestpointTaylorpolylefthalfplane} have been used.

\begin{table}[h!]
\begin{center}
    \begin{tabular}{c|c||c|c}
    $p$ & $\M^{\mathrm{EM}}_p(\Ss_p\cap \Complex_{-,0})$ & $p$ & $\M^{\mathrm{EM}}_p(\Ss_p\cap \Complex_{-,0})$ \\ \hline
    2 & $\sqrt{2 \left(1+\sqrt{2}\right)}\approx 2.198$  & 6 & 25.378 \\
    4 & $7.332$ & 8 &  88.755\\  \hline   
       \end{tabular}
\caption{Maximum internal amplification factors for the first few explicit midpoint extrapolation methods 
of order $p$ with respect to the absolute stability region of the method in the left half of the
complex plane. For $p\ge 4$, the exact maximum values are rounded up. 
The algebraic degrees of the entries corresponding to $p=2, 4, 6, 8$ 
and returned by \textit{Mathematica} are $4, 38, 5, 7$,
respectively. \label{maxQEMpmlexactvalues}}
\end{center}
\end{table}

\begin{table}[h!]
\begin{center}
    \begin{tabular}{c|c||c|c}
    $p$ & $\M^{\mathrm{EM}}_p(\Ss_p\cap \Complex_{-,0})\le\ldots$ & $p$ & $\M^{\mathrm{EM}}_p(\Ss_p\cap \Complex_{-,0})\le\ldots$ \\ \hline
     10 & 836 & 16 &  57225 \\
    12 & 3108  & 18  &  242706 \\
    14 & 14491  & 20 & $1.110\cdot 10^6$ \\ \hline
         \end{tabular}
\caption{Upper bounds on $\M^{\mathrm{EM}}_p(\Ss_p\cap \Complex_{-,0})$
\label{MEMupperboundslefthalfplane}}
\end{center}
\end{table}

\subsubsection{The case ${\mathcal{S}}=\{0\}$}

Table \ref{maxQEMpmlattheorigin} lists 
the corresponding 
\[
\M^{\mathrm{EM}}_p(\{0\})\equiv \max_{1\le m \le r}\max_{1\le \ell \le 2m}
\frac{2m^{2r} }{(r-m)! (r+m)!}
\vert q_{m,2m-\ell+1}(0)\vert
\]
values for the EM extrapolation (with $r=\frac{p}{2}$). Notice that $q_{m,j}(0)=0$ for $j$ even, and 
$q_{m,j}(0)=1$ for $j$ odd, so
\[
\M^{\mathrm{EM}}_p(\{0\})= \max_{1\le m \le r}\frac{2m^{2r} }{(r-m)! (r+m)!}.
\]

\begin{table}[h!]
\begin{center}
    \begin{tabular}{c|c||c|c}
    $p$ & $\M^{\mathrm{EM}}_p(\{0\})$ & $p$ & $\M^{\mathrm{EM}}_p(\{0\})$ \\ \hline
    2 & $1$  & 12 & $12.3$  \\
    4 & $4/3\approx 1.34$ & 14 &  $25.2$ \\
    6 & $81/40=2.025$ & 16 &  $50.9$ \\
    8 & $1024/315\approx 3.26$ & 18 &   $101.3$ \\
    10 & $16384/2835\approx 5.78$  & 20 &  $199.9$ \\ \hline
           \end{tabular}
\caption{For $p\ge 12$, the exact (rational) values are rounded up. 
\label{maxQEMpmlattheorigin}}
\end{center}
\end{table}

We remark that if $x^{***}\approx 0.8336$ denotes the unique root in $(0,1)$ of the equation 
\[
\ln \left(\frac{1+x}{1-x}\right)=\frac{2}{x},
\]
then $\M^{\mathrm{EM}}_p(\{0\})$ is asymptotically equal to 
\[
f_{6,p}(x^{***})\approx
1.1524\cdot \frac{1.509^p}{p} ,
\]
where
\[
f_{6,p}(x):=
\frac{2  \left(ex\, (1-x)^{\frac{1}{2}(x-1)} \, (1+x)^{-\frac{1}{2} (1+x)}\right)^p}{\pi  p \sqrt{1-x^2}}.
\]

\section{Conclusion}
Roundoff is not usually a significant source of error in traditional
Runge--Kutta methods with typical error tolerances and double-precision arithmetic.
However, when very high accuracy is required, it is natural to use very high order
methods, which necessitates the use of large numbers of stages and can lead to
substantial amplification of roundoff errors.  This amplification is problematic
precisely when high precision is desired. In fact, traditional error bounds and estimates
that neglect roundoff error become useless in this situation.

In the past, internal error
amplification has been a practical issue only in rare cases.  However, current trends
toward the use of higher-order methods and higher-accuracy computations suggest
that it may be a more common concern in the future.
The analysis and bounds given in this work can be used to accurately estimate
at what tolerance roundoff will become important in a given computation.
More generally, the {\em maximum internal amplification factor} that we
have defined provides a single useful metric for deciding whether internal
stability may be a concern in a given computation.

We have emphasized that internal amplification depends on implementation,
and that the choice of implementation can be used to modify the internal
stability polynomials.  However, it is not yet clear whether dramatic improvements
in internal stability can be achieved in this manner for methods of interest.

For implicit Runge--Kutta methods, internal stability may become important 
even when the number of stages is moderate.  The numerical solution of the 
stage equations is usually performed iteratively and stopped when the 
stage errors are estimated to be ``sufficiently small''.  If the amplification factor is large,
then the one-step error may be also large even if the stage errors (and truncation
error) are driven iteratively to small values.  A study of the amplification of solver errors for 
practical implicit methods constitutes interesting future work.

\section*{Acknowledgment}
We are indebted to Yiannis Hadjimichael for input into the bounds
on the internal stability polynomials for third order SSP methods.
We thank Umair bin Waheed for
running computations that led to the example in Section \ref{sec:example}.


\bibliographystyle{plain}
\bibliography{internal-errors-paper}

\end{document}